\font\tencyr=wncyr10
\def\cyr{\tencyr\cyracc}
\newcommand{\CC}{\mathbb{C}}
\newcommand{\PP}{\mathbb{P}}
\newcommand{\FF}{\mathbb{F}}
\newcommand{\ZZ}{\mathbb{Z}}
\newcommand{\Gr}{\operatorname{Gr}}
\newcommand{\Pic}{\operatorname{Pic}}
\renewcommand\labelenumi{(\roman{enumi})}
\renewcommand\theenumi{(\roman{enumi})}
\newtheorem{theorem}{Theorem}[section]
\newtheorem{proposition}[theorem]{Proposition}
\newtheorem{lemma}[theorem]{Lemma}
\newtheorem*{lemma*}{Lemma}
\newtheorem{corollary}[theorem]{Corollary}
\newtheorem{claim}[theorem]{Claim}
\newtheorem*{claim*}{Claim}
\theoremstyle{definition}
\newtheorem{remark}[theorem]{Remark}
\newtheorem{construction}[theorem]{Construction}
\newtheorem{sit}[theorem]{}
\theoremstyle{remark}
\newtheorem{convention}[theorem]{Convention}
\newcommand{\rk}{\operatorname{rk}}
\newcommand{\Sing}{\operatorname{Sing}}
\newcommand{\Aut}{\operatorname{Aut}}
\author{Yuri Prokhorov}
\thanks{The first author was partially supported by the grants 
RFBR 
\textnumero 
15-01-02164, 15-01-02158, 15-51-50045{{\cyr{YAF\_a}}}, 
and a subsidy granted to the HSE by the Government of the Russian Federation
for the implementation of the Global Competitiveness Program.
Both authors thank the Max Planck Institute of Mathematics in Bonn,
where a part of the paper was written, for excellent working conditions and a generous support. }
\address{Yuri Prokhorov:
Steklov Mathematical Institute of Russian Academy of Sciences
\newline\indent
Department of Algebra, 
Moscow State Lomonosov University
\newline\indent
National Research University
Higher School of Economics (Russian Federation)
}
\email{prokhoro@gmail.com}
\author{Mikhail Zaidenberg}
\address{Mikhail Zaidenberg: Universit\'e
Grenoble I, Institut Fourier, UMR 5582 CNRS-UJF, BP~74, 38402 Saint
Martin d'H\`eres cedex, France} \email{zaidenbe@ujf-grenoble.fr }
\title{New examples of cylindrical Fano fourfolds}
\begin{document}
\begin{abstract}
We construct new families of smooth Fano fourfolds with Picard rank 1,
which contain cylinders, i.e., Zariski open subsets of form $Z\times{\mathbb A}^1$,
where $Z$ is a quasiprojective variety.
The affine cones over such a fourfold admit effective $\mathbb{G}_{\operatorname{a}}$-actions.
Similar constructions of cylindrical Fano threefolds and fourfolds were done previously in
\cite{Kishimoto-Prokhorov-Zaidenberg, Kishimoto-Prokhorov-Zaidenberg-Fano, Prokhorov-Zaidenberg-2014}.
\end{abstract}
%
%
\subjclass[2010]{Primary 14R20, 14J45; \ Secondary 14J50, 14R05}
\keywords{affine cone, Fano variety, group
action, additive group}
\maketitle
\section{Introduction} All varieties in this paper are algebraic and are defined over $\CC$.
A smooth projective variety $V$
is called {\em cylindrical} if it contains a cylinder, i.e.,
a principal Zariski open subset $U$ isomorphic to a product $Z\times{\mathbb A}^1$,
where $Z$ is a variety and ${\mathbb A}^1$ stands for the affine line
(\cite{Kishimoto-Prokhorov-Zaidenberg,
Kishimoto-Prokhorov-Zaidenberg-criterion}).

Assuming that $ \rk \Pic(V)=1$,
by a criterion of
\cite[Cor.\ 3.2]{Kishimoto-Prokhorov-Zaidenberg-criterion},
the affine cone over $V$ admits an effective action of the additive group
$\mathbb{G}_{\operatorname{a}}$
if and only if $V$ is cylindrical. In the latter case $V$ is a Fano variety. Indeed, the existence of a 
$\mathbb{G}_{\operatorname{a}}$-action on the affine cone over
$V$ implies that $V$ is uniruled. Since $ \rk \Pic(V)=1$, $V$ must be a Fano variety. 
In \cite{Kishimoto-Prokhorov-Zaidenberg, Kishimoto-Prokhorov-Zaidenberg-Fano, Prokhorov-Zaidenberg-2014} 
several families of smooth cylindrical Fano threefolds and fourfolds with Picard number 1 were constructed.
Here we provide further examples of such fourfolds. Let us recall the standard terminology and notation.

\subsection{Notation}
Given a smooth Fano fourfolds $V$ with Picard rank 1, the \textit{index} of $V$
is the integer $r$ such that $-K_V=rH$, where $H$ is the ample divisor generating
the Picard group: $\Pic (V)=\ZZ\cdot H$ (by abuse of notation, we denote by the same letter a divisor 
and its class in the Picard group). 
The \textit{degree} $d=\deg V$ is the degree with respect to $H$.
It is known that $1\le r\le 5$. Moreover,
if $r=5$ then
$V\cong \PP^4$, and if $r=4$ then $V$ is a quadric in
$\PP^5$. Smooth Fano fourfolds of index $r=3$ are called {\em del
Pezzo fourfolds};
their degrees vary in the range $1\le d\le 5$ (\cite{Fujita1980}-\cite{Fujita-620281}).
Smooth Fano fourfolds of index $r=2$ are called {\em Mukai fourfolds};
their degrees are even and can be written as $d=2g-2$, where $g$ is called the \textit{genus} of $V$.
The genera of Mukai fourfolds satisfy
$2\le g\le 10$ (\cite{Mukai-1989}). The classification of Fano fourfolds of
index $r=1$ is not known.

According to \cite[Thm.\ 0.1]{Prokhorov-Zaidenberg-2014} a smooth intersection of two quadrics in $\PP^6$ 
is a cylindrical del Pezzo fourfold of degree $4$.
A smooth del Pezzo fourfold $W=W_5\subset \PP^7$ of degree $5$ is also cylindrical ({\it ibid.}).

\subsection{On the content.} Starting with the del Pezzo quintic fourfold $W$ and performing suitable Sarkisov links 
we constructed in \cite{Prokhorov-Zaidenberg-2014}
two families of cylindrical Mukai fourfolds $V_{12}$ of genus $7$ and $V_{14}$ of genus $8$. Proceeding in a similar 
fashion, in the present paper
we construct two more families of cylindrical Mukai fourfolds $V_{16}$ of genus $9$ and $V_{18}$ of genus $10$, see 
Theorem \ref{theorem-1} and Corollary \ref{cor:cylinder}. These are the main results of the paper.

The paper is divided into 6 sections. After formulating in Section \ref{sec:Sarkisov} our principal results, we give 
in Section \ref{sec:preliminaries} necessary preliminaries. In particular, we recall some useful facts from 
\cite{Prokhorov-Zaidenberg-2014}. In Section \ref{sec:proof-of-theorem-1} we prove Theorem \ref{theorem-1}
about the existence of suitable Sarkisov links. This theorem depends on the existence of certain specific 
surfaces in the quintic fourfold $W$. Section \ref{sec:constructions} is devoted to constructions of such 
surfaces, see Proposition \ref{lemma-existence-surfaces}. The resulting Mukai fourfolds $V_{16}$ and $V_{18}$ 
occur to be cylindrical, with a cylinder coming from a one on $W$ via the corresponding Sarkisov link, see 
Corollary \ref{cor:cylinder}. Section 6 contains concluding remarks and some open problems. 

\section{Main results} \label{sec:Sarkisov}
The following theorem describes the Sarkisov links used in our constructions.

\begin{theorem}
\label{theorem-1}
Let $W=W_5\subset \PP^7$ be a del Pezzo fourfold of degree $5$, and let
$F\subset W\cap \PP^6$ be a smooth surface of one of the following types:
\begin{enumerate}
\renewcommand\labelenumi{\alph{enumi})}
\renewcommand\theenumi{\textup{\alph{enumi})}}
\item\label{case-g=10}
$F\subset \PP^6$ is a rational normal quintic scroll, $F\cong \FF_1$,
and

\item\label{case-g=9}
$F\subset \PP^6$ is an anticanonically embedded sextic del Pezzo surface such that $c_2(W)\cdot F=26$
\textup(see Lemma \textup{\ref{lem:enumerative}}\textup).
\end{enumerate}
Suppose that $F$ does not intersect any plane in $W$ along a \textup(possibly, degenerate\textup) conic.
Then there is a commutative diagram
\begin{equation}\label{eq:diagram}
\vcenter{
\xymatrix{
&D\ar[dl]&\hookrightarrow&\widetilde W\ar[dr]^{\varphi}\ar[dl]_{\rho}&\hookleftarrow& \tilde E\ar[dr]
\\
F&\hookrightarrow&W\ar@{-->}[rr]^{\phi} &&V&\hookleftarrow& S
}
}
\end{equation}
where
\begin{itemize}
\item
$V=V_{2g-2}\subset \PP^{g+2}$ is a Mukai fourfold of genus $g=10$ in case \ref{case-g=10}
and
$g=9$ in case \ref{case-g=9};
\item
the map $\phi: W\dashrightarrow V\subset \PP^{g+2}$ is given by
the linear system of quadrics passing through $F$, while $\phi^{-1}: V \dashrightarrow W$ is the 
projection from the linear span $\langle S\rangle$ of $S$. 
\end{itemize}
Furthermore,
\begin{enumerate}
\item
$\rho: \widetilde W\longrightarrow W$ is the blowup of $F$
with exceptional divisor $D$, and
$\varphi: \widetilde W\longrightarrow V$ is
the blowup of a smooth surface $S\subset V$ with exceptional divisor
$\tilde E$, where
\begin{itemize}
\item
in case \ref{case-g=10}
$S\subset \PP^4\subset \PP^{12}$ is a normal cubic scroll with
$c_2(V)\cdot S=7$, and
\item
in case \ref{case-g=9}
$S\subset \PP^3\subset \PP^{11}$ is a quadric with
$c_2(V)\cdot S=5$;
\end{itemize}
\item
if $H$ is an ample generator of $\Pic(W)$
and $L$ is an ample generator of $\Pic(V)$, then on $\widetilde W$ we have
\begin{equation}\label{equation-intersections}
\begin{array}{ll}
\rho^*H\equiv\varphi^* L-\tilde E,&
D\equiv \varphi^* L-2\tilde E,
\\[7pt]
\varphi^* L\equiv 2\rho^*H-D,&
\tilde E\equiv \rho^*H-D.
\end{array}
\end{equation}
\end{enumerate}
\end{theorem}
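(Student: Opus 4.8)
The plan is to build the right-hand side of \eqref{eq:diagram} by playing the two-ray game on the blow-up of $F$ and identifying the second extremal contraction with $\varphi$. First I would take $\rho\colon \widetilde W\to W$ to be the blow-up of the smooth surface $F$, with exceptional divisor $D$; since $\rk\Pic(W)=1$ and $F$ is smooth, $\Pic(\widetilde W)=\ZZ\rho^*H\oplus\ZZ D$ has rank $2$. The blow-up formula for a codimension-two centre gives $K_{\widetilde W}=-3\rho^*H+D$, so $-K_{\widetilde W}=3\rho^*H-D$ and $\NE(\widetilde W)$ is a two-dimensional cone. One of its edges is contracted by $\rho$ (the ruling of $D$); the whole theorem amounts to understanding the opposite edge. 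The natural candidate to span it is $M:=2\rho^*H-D$, which is exactly the strict transform of the linear system of quadrics through $F$ (a general quadric through $F$ has multiplicity one along it). If $M$ is semiample it defines a morphism $\varphi$, and then $\phi=\varphi\circ\rho^{-1}$ is automatically given by those quadrics.

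The first and principal difficulty is to show that $M=2\rho^*H-D$ is base-point free, i.e.\ that the quadrics through $F$ have scheme-theoretic base locus equal to $F$, so that blowing up $F$ resolves their indeterminacy. This is precisely where the hypothesis that $F$ meets no plane of $W$ along a (possibly degenerate) conic enters: on a plane $P\subset W$ every quadric restricts to a conic, and if $P\cap F$ were already a conic then all quadrics through $F$ would cut the same conic on $P$, so $\varphi$ would contract the strict transform of $P$ to a point, giving a small contraction rather than the divisorial one we want. I would therefore describe the quadrics through $F$ in each case (using that $F$ spans a $\PP^6$ and has degree $5$, resp.\ $6$), bound their base locus, and invoke the hypothesis to exclude residual components supported on planes. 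Base-point-freeness then yields $\varphi\colon\widetilde W\to V\subset\PP^N$ with $\varphi^*L=M$, where $L=\mathcal{O}_V(1)$.

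Next I would analyse $\varphi$ as the contraction of the remaining $K_{\widetilde W}$-negative extremal ray. The second main point is to prove that $\varphi$ is a \emph{smooth} blow-down, contracting a single prime divisor $\tilde E$ onto a smooth surface $S$; here I would invoke results on the structure of divisorial extremal contractions from a smooth fourfold together with the description of $\tilde E$ as a projective bundle over $S$, the no-conic hypothesis again excluding lower-dimensional contracted loci. The genus of $V$ is then read off from $\deg V=L^4=(2\rho^*H-D)^4$, which I would expand on $\widetilde W$ using $H^4=5$, $\deg F$, and the Chern numbers of $N_{F/W}$, the latter controlled by $K_F$, $c_2(F)$ and the value $c_2(W)\cdot F$. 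A short computation shows that $c_2(W)\cdot F=26$ is exactly the input yielding $L^4=16$, so $V=V_{16}$ has genus $g=9$ in case \ref{case-g=9}, while the corresponding number for the scroll yields $L^4=18$ and $g=10$ in case \ref{case-g=10}; in particular $-K_V=2L$, and one checks $h^0(\widetilde W,M)=g+3$, so that $|L|$ embeds $V$ in $\PP^{g+2}$. The canonical bundle formula $K_{\widetilde W}=\varphi^*K_V+\tilde E$ together with $-K_{\widetilde W}=3\rho^*H-D$ and $\varphi^*L=2\rho^*H-D$ then forces $\tilde E=\rho^*H-D$. Finally I would compute the class and degree of $S=\varphi(\tilde E)$ to recognise it as a cubic scroll in $\PP^4$, resp.\ a quadric in $\PP^3$, and evaluate $c_2(V)\cdot S$.

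The remaining assertions are then formal. Once $\varphi^*L=2\rho^*H-D$ and $\tilde E=\rho^*H-D$ are known, everything lives in $\Pic(\widetilde W)=\ZZ\rho^*H\oplus\ZZ D$, and the other two identities of \eqref{equation-intersections}, $\rho^*H=\varphi^*L-\tilde E$ and $D=\varphi^*L-2\tilde E$, follow by elimination. The relation $\rho^*H=\varphi^*L-\tilde E$ also shows that $\phi^{-1}$ is the projection from $\langle S\rangle$: the hyperplanes of $\PP^{g+2}$ containing $\langle S\rangle$ pull back to $|\rho^*H|$, i.e.\ to the hyperplane system defining $W\subset\PP^7$. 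I expect all the genuine difficulty to be concentrated in the base-point-freeness of the quadratic system and in proving that $\varphi$ is a smooth blow-up; once these two points are secured, the numerical identification of $V$ and of $S$ is a bookkeeping exercise in intersection theory.
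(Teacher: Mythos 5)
Your overall strategy is the same as the paper's: blow up $F$, run the two-ray game on $\widetilde W$, let the quadrics through $F$ define the second extremal contraction $\varphi$, use the structure theory of divisorial contractions to get a smooth blow-down, and identify $V$ and $S$ by intersection numbers. But you misdiagnose where the no-conic hypothesis enters, and this matters. Base-point-freeness of $|2\rho^*H-D|$ has nothing to do with that hypothesis: it follows from the classical fact that a rational normal quintic scroll, resp.\ an anticanonically embedded sextic del Pezzo surface, is a scheme-theoretic intersection of quadrics (Lemmas \ref{lem:intersec-quadrics} and \ref{lem:morphism} of the paper). Indeed, even if a plane $\mathcal P\subset W$ met $F$ along a conic $C$, every quadric through $F$ would cut exactly $C$ on $\mathcal P$, so the base locus would still be contained in $F$; what goes wrong is different: since $C$ spans $\mathcal P$, one has $\mathcal P\subset\langle F\rangle\cap W$, the strict transform of $\mathcal P$ lies in $\tilde E$ and becomes a \emph{two-dimensional fibre} of $\varphi$. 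This does not make $\varphi$ small (it still contracts the divisor $\tilde E$, as $\tilde E\cdot(2H^*-D)^3=0$ regardless); it makes $S$ or $V$ singular. The hypothesis is used exactly once, in Lemma \ref{lem:replacement of 2.6 and 2.7}: by Ando's theorem, if $S$ or $V$ were singular there would be a fibre $\widetilde Y\cong\PP^2$ with $H^*|_{\widetilde Y}=\mathcal{O}_{\PP^2}(1)$ and $D|_{\widetilde Y}=\mathcal{O}_{\PP^2}(2)$, whose image in $W$ would be a plane meeting $F$ along a conic. You do invoke the hypothesis a second time, correctly, for the smooth-blow-down step, so your plan is repairable; but your ``first and principal difficulty'' is a non-issue, and the failure mode you describe for it is wrong.

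The second, more serious gap is the passage from the morphism $\Phi_{|2H^*-D|}$ to the \emph{embedded} Mukai fourfold $V\subset\PP^{g+2}$. You write that $h^0(\widetilde W,M)=g+3$, ``so that $|L|$ embeds $V$ in $\PP^{g+2}$''; a count of sections never yields an embedding. A priori $\Phi_{|2H^*-D|}$ factors through its Stein factorization $\widetilde W\stackrel{\varphi}{\to}U\to V$ with $U\to V$ finite, and without more work $V$ could be a non-normal image of degree $<2g-2$, not the Mukai fourfold $U$. The paper closes this with Lemma \ref{lem:5.6}: for a Mukai fourfold $U$ with at worst terminal Gorenstein singularities and $\rk\Pic U=1$, base-point-freeness of $|-\frac12K_U|$ implies very ampleness, proved by reduction to the threefold results of Mukai, Iskovskikh, and Przhiyalkovskij--Cheltsov--Shramov. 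One first checks (as in the paper's Lemma \ref{lem:U-Mukai}) that $U$ is such a Mukai fourfold, then identifies $U\cong V$, and only afterwards can one speak of $\deg V=2g-2$ in $\PP^{g+2}$ and of $\varphi$ as the blow-up of $S\subset V$. Your sketch silently assumes this identification; supplying it (or an equivalent very-ampleness argument) is a necessary step, not bookkeeping.
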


The proof is done in Section \ref{sec:proof-of-theorem-1}. In Section \ref{sec:constructions} 
we establish the existence 
of surfaces $F$ as in Theorem \ref{theorem-1}. 

Using this theorem, we deduce our main results.

\begin{theorem}\label{main-theorem}
Under the assumptions of Theorem \textup{\ref{theorem-1}},
there is an isomorphism 
\begin{equation*}
V\setminus\varphi(D)\cong W\setminus\rho(\tilde E)\,,
\end{equation*}
where $\varphi(D)$ is a hyperplane section of $V=V_{2g-2}\subset \PP^{g+2}$ singular along 
$S=\varphi(\tilde E)$, and 
$\rho(\tilde E)=W\cap \langle F\rangle$ is a singular hyperplane section of
$W=W_5\subset \PP^7$ by the linear span of $F$.
\end{theorem}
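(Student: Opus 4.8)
The plan is to realize both open sets as the common complement $\widetilde W\setminus(D\cup\tilde E)$ inside the smooth fourfold $\widetilde W$, transported by the two blowdowns of the diagram \eqref{eq:diagram}. Since $\rho$ is the blowup of the smooth surface $F$ with exceptional divisor $D$, it restricts to an isomorphism $\widetilde W\setminus D\xrightarrow{\sim}W\setminus F$; likewise, as $\varphi$ is the blowup of $S$ with exceptional divisor $\tilde E$, it restricts to an isomorphism $\widetilde W\setminus\tilde E\xrightarrow{\sim}V\setminus S$, where $S=\varphi(\tilde E)$ and $F=\rho(D)$. Everything will follow once the two ``forbidden'' divisors are shown to have one and the same preimage $D\cup\tilde E$ in $\widetilde W$.

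First I would identify $\rho(\tilde E)$ and $\varphi(D)$ as divisors by pushing forward \eqref{equation-intersections}. Because $D$ and $\tilde E$ are contracted to surfaces, $\rho_*D=0$ and $\varphi_*\tilde E=0$, whence $\rho_*\tilde E\equiv H$ and $\varphi_*D\equiv L$; as $\rho|_{\tilde E}$ and $\varphi|_D$ are birational onto their images, this means $\rho(\tilde E)\in|H|$ and $\varphi(D)\in|L|$ are hyperplane sections of $W$ and $V$ respectively. The coefficients in \eqref{equation-intersections} record the multiplicities of these sections along the blown-up surfaces: reading $\tilde E\equiv\rho^*H-D$ as a strict-transform identity gives $\operatorname{mult}_F\rho(\tilde E)=1$, so $F\subset\rho(\tilde E)$, while $D\equiv\varphi^*L-2\tilde E$ gives $\operatorname{mult}_S\varphi(D)=2$, so $\varphi(D)$ contains $S$ doubly and is therefore singular along $S$. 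Since $F\subset W\cap\PP^6$ spans the hyperplane $\langle F\rangle=\PP^6$, it lies on a unique hyperplane section of $W\subset\PP^7$, which must then be $\rho(\tilde E)$; this identifies $\rho(\tilde E)=W\cap\langle F\rangle$.

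Now the preimage identities are immediate. As $F\subset\rho(\tilde E)$, the whole exceptional divisor $D=\rho^{-1}(F)$ lies over $\rho(\tilde E)$, and its strict transform is $\tilde E$ by the multiplicity-one computation, so $\rho^{-1}(\rho(\tilde E))=D\cup\tilde E$ set-theoretically. Symmetrically, $S\subset\varphi(D)$ forces $\tilde E=\varphi^{-1}(S)\subset\varphi^{-1}(\varphi(D))$, with strict transform $D$, so $\varphi^{-1}(\varphi(D))=D\cup\tilde E$ as well. Consequently $\widetilde W\setminus(D\cup\tilde E)=\rho^{-1}\bigl(W\setminus\rho(\tilde E)\bigr)=\varphi^{-1}\bigl(V\setminus\varphi(D)\bigr)$. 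Since $W\setminus\rho(\tilde E)\subset W\setminus F$, over which $\rho$ is an isomorphism, $\rho$ restricts to an isomorphism $\widetilde W\setminus(D\cup\tilde E)\xrightarrow{\sim}W\setminus\rho(\tilde E)$; for the same reason $\varphi$ restricts to an isomorphism $\widetilde W\setminus(D\cup\tilde E)\xrightarrow{\sim}V\setminus\varphi(D)$. Composing one with the inverse of the other yields the desired $V\setminus\varphi(D)\cong W\setminus\rho(\tilde E)$.

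The one point that is not formal is the asserted \emph{singularity} of $\rho(\tilde E)$. Unlike $\varphi(D)$, which is doubled along $S$ and hence visibly singular there, $\rho(\tilde E)$ meets $F$ with multiplicity one, so its singularity cannot be read off from \eqref{equation-intersections}. The hard part will be to show that the birational morphism $\rho|_{\tilde E}\colon\tilde E\to\rho(\tilde E)=W\cap\langle F\rangle$ fails to be an isomorphism — equivalently, that the inverse projection $\phi^{-1}$ from $\langle S\rangle$ genuinely degenerates along $F$ — so that $\rho(\tilde E)$, being the image of the smooth threefold $\tilde E$ under a non-isomorphism, is non-normal and in particular singular. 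I expect this to follow from a local analysis of the tangent cone of $W\cap\langle F\rangle$ along the relevant curve in $F$, using the hypothesis that $F$ meets no plane of $W$ along a conic; the isomorphism of complements, by contrast, needs only the bookkeeping above.
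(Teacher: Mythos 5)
Your argument for the isomorphism itself and for the description of $\varphi(D)$ is correct and is essentially the paper's own proof (Lemma \ref{lem:U=V}): both identify the two complements with the common open set $\widetilde W\setminus(D\cup\tilde E)$, both obtain $S\subset\varphi(D)$ and the singularity of $\varphi(D)$ along $S$ from $D\sim\varphi^*L-2\tilde E$, and both pin down $\rho(\tilde E)=W\cap\langle F\rangle$ via the uniqueness of the hyperplane section through $F$ (Remark \ref{lem:unique-hyperplane}; in the paper this identification is built into the setup \ref{sit:pf-2.1}, where $\tilde E$ is defined as the proper transform of $E=W\cap\langle F\rangle$).

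The genuine gap is the one you flagged yourself: the claim that $\rho(\tilde E)$ is singular. Your plan for closing it would fail, for two reasons. First, the implication you invoke --- that $\rho|_{\tilde E}$ being a birational non-isomorphism from a smooth threefold forces its image to be non-normal --- is false: a birational morphism may contract curves to points of a perfectly normal image (it need not be finite, so Zariski's main theorem gives nothing). Second, and decisively, non-normality is the wrong target: in case \ref{case-g=9} the paper's own construction (Proposition \ref{lemma-existence-surfaces} b)) produces $F$ with $W\cap\langle F\rangle=U_5$, the quintic del Pezzo threefold with two nodes, which is singular but \emph{normal}; there $\rho|_{\tilde E}\colon\tilde E\to\rho(\tilde E)$ is exactly a small resolution of the two nodes. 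The correct argument is short and uniform in both cases: if $E=W\cap\langle F\rangle$ were smooth, the smooth surface $F\subset E$ would be a Cartier divisor on $E$, so the strict transform $\tilde E\to E$, being the blowup of the invertible ideal $\mathcal{I}_F\cdot\mathcal{O}_E$, would be an isomorphism; but by Lemma \ref{lem:replacement of 2.6 and 2.7} $\tilde E$ is the exceptional divisor of the blowup of the smooth surface $S$ in the smooth fourfold $V$, hence a $\PP^1$-bundle over $S$ (with $S\cong\FF_1$ or a smooth quadric), so $\rk\Pic\tilde E=3$, whereas a smooth hyperplane section of $W$ has Picard rank $1$ by the Grothendieck--Lefschetz theorem --- a contradiction. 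So the singularity takes two lines, not a tangent-cone analysis. For what it is worth, the paper asserts this singularity in Theorem \ref{main-theorem} but leaves it tacit in Lemma \ref{lem:U=V}, so your instinct that something remained to be proved here is sound; only your proposed proof of it is not.
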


Recall the following fact (\cite[Thm.\ 4.1]{Prokhorov-Zaidenberg-2014}).

\begin{theorem}\label{thm:W-cyl}
For any hyperplane section $M$ of $W$, the complement $W\setminus M$ contains a cylinder.
\end{theorem}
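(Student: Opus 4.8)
The plan is to exhibit a \emph{single} rational fibration of $W$ by lines and to observe that, because lines have $H$-degree one, such a fibration is automatically compatible with every hyperplane section $M$ simultaneously. Concretely, I would realize $W=W_5$ as the section $\Gr(2,5)\cap\PP^7$ of the Grassmannian of $2$-planes in $\CC^5$ in its Pl\"ucker embedding, fix a hyperplane $U=\CC^4\subset\CC^5$, and consider the projection $\pi\colon W\dashrightarrow \PP(U)=\PP^3$ sending a plane $V$ to the line $V\cap U$. This is defined for $V\not\subset U$, and the fibre over a general $[\ell]\in\PP^3$ is $\{V:\ell\subset V\}\cap\PP^7$, i.e.\ the intersection of an $\alpha$-plane $\PP^3\subset\Gr(2,5)$ with $\PP^7$, which is a line in $W$. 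Thus $\pi$ is a dominant rational map onto $\PP^3$ whose general fibre is an irreducible line $\ell_z\subset W$ with $H\cdot\ell_z=1$.

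The key point is the following elementary observation. Since $\Pic(W)=\ZZ\cdot H$ and any hyperplane section $M$ lies in $|H|$, every fibre line satisfies $\ell_z\cdot M=\ell_z\cdot H=1$; hence either $\ell_z\subset M$ or $\ell_z$ meets $M$ in a single reduced point. Because the $\ell_z$ sweep out $W$ while $M$ is a divisor, for general $z$ one has $\ell_z\not\subset M$, so $\ell_z\cap M$ is one reduced point and $\ell_z\setminus M\cong\mathbb{A}^1$. This is exactly what makes the construction uniform in $M$: there is no tangency to avoid, only the closed condition $\ell_z\subset M$, which fails on a dense open subset $Z^\circ\subset\PP^3$.

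It then remains to assemble these affine lines into a genuine cylinder. Over the open locus $Z^\circ$ on which $\pi$ is a morphism with smooth irreducible fibres, the preimage $U_0=\pi^{-1}(Z^\circ)$ is an open subset of $W$ and $U_0\to Z^\circ$ is a $\PP^1$-bundle; the trace $M\cap U_0$ is a degree-one multisection, hence a section after a further shrinking of $Z^\circ$. Removing a section from a $\PP^1$-bundle leaves the total space of a line bundle, which is Zariski-locally a product; shrinking $Z^\circ$ to an affine open on which this line bundle is trivial yields $Z^\circ\times\mathbb{A}^1\cong U_0\setminus M\subset W\setminus M$, the desired cylinder.

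I expect the main technical obstacle to be the control of the degenerate fibres and of the indeterminacy locus of $\pi$ (the sub-Grassmannian $\{V\subset U\}$ meeting $\PP^7$), together with checking that the general fibre is reduced and irreducible, so that $Z^\circ$ is genuinely dense and the $\PP^1$-bundle structure is available. Once this is settled, the uniformity over all $M$ costs nothing, since every $M\in|H|$ is handled by one and the same fibration $\pi$.
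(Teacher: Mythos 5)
Your line-counting observations are all correct (fibre lines have $H$-degree one, hence meet $M$ in a single reduced point when not contained in it), but the assembly step contains a genuine gap, and it is structural rather than technical. The indeterminacy locus of $\pi$ is the quadric surface $B=\{V\subset U\}\cap\PP^7=\Gr(2,4)\cap\PP^7\subset W$, and the point you overlooked is that \emph{every} fibre line meets $B$: for $[\ell]\in\PP(U)$ the fibre closure is $L_{[\ell]}=\{V:\ell\subset V\}\cap\PP^7$, and $L_{[\ell]}\cap B=\{V:\ell\subset V\subset U\}\cap\PP^7$ is the intersection of a linear $\PP^2$ with the codimension-two linear subspace $\PP^7\subset\PP^9$, hence nonempty. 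Consequently the honest morphism $\pi\colon W\setminus B\to\PP^3$ has general fibre $L_z\setminus B\cong\mathbb{A}^1$, not $\PP^1$, and your claim that $U_0=\pi^{-1}(Z^\circ)$ is simultaneously open in $W$ and a $\PP^1$-bundle over $Z^\circ$ is impossible: through each point $V_0\in B$ passes the whole pencil of fibre lines $\{L_{[\ell]}\}_{\ell\subset V_0}$, pairwise meeting only at $V_0$, so $\pi$ cannot extend across $B$, while an open set avoiding $B$ contains no complete fibre. In fact no open subset of $W$ carries a proper $\PP^1$-fibration at all: the complement of such a set would contain no divisor (a nonzero effective divisor on $W$ is a positive multiple of the ample class $H$, hence meets the complete fibres), so its Picard group would be $\ZZ\cdot H$ by smoothness of $W$; but then the pullback of a nonzero effective divisor from the base would be a nonzero effective divisor with intersection number $0$ against every fibre, which is incompatible with being a multiple of $H$. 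This is exactly the price of $\rk\Pic(W)=1$. Resolving the indeterminacy does not help: the graph $\Gamma_{Z^\circ}\to Z^\circ$ is a $\PP^1$-bundle, but to return to an open subset of $W$ you must delete the exceptional divisor over $B$, which is a second section besides the one coming from $M$.

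Concretely, what your recipe produces inside $W\setminus M$ is fibred over $Z^\circ$ with fibres $L_z\setminus(B\cup M)$. Since $L_z$ meets both $M$ and $B$, and since the map $z\mapsto L_z\cap B$ is dominant onto $B$ (its fibres are the pencils $\PP(V_0)$, $V_0\in B$), for general $z$ these are two \emph{distinct} points whenever $B\not\subset M$; the fibre is then $\PP^1$ minus two points, i.e.\ $\CC^*$, and no shrinking of $Z^\circ$ turns a $\CC^*$-fibration into a cylinder. The favourable case $B\subset M$ forces $M$ to contain the $3$-plane $\langle B\rangle$, a codimension-$4$ condition on the $7$-dimensional family of hyperplane sections; and even allowing $U$ to be chosen after $M$, the existence of some $U$ with $B_U\subset M$ means that the plane of skew forms spanned by the two forms cutting out $W$ and one cutting out $M$ contains a form of rank $\le 2$ --- a proper closed condition that fails for the general $M$, since the rank-$\le 2$ locus is only $6$-dimensional in $\PP^9$. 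So your method can at best treat special hyperplane sections, the opposite of the theorem's ``any $M$''; the uniformity you hoped would ``cost nothing'' is precisely what fails. For comparison: the paper does not prove this statement but quotes it from \cite[Thm.\ 4.1]{Prokhorov-Zaidenberg-2014}, where the argument is tailored to $M$ through the special birational geometry of $W$ (its planes and the projections from planes and lines, in the spirit of Proposition \ref{prop:PZ4.11}) rather than through one covering family of lines.
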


\begin{corollary}\label{cor:cylinder} 
Any Fano fourfold $V$ as in Theorem \textup{\ref{theorem-1}} is cylindrical. 
\end{corollary}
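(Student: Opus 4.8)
The plan is purely to transport a cylinder from $W$ to $V$ along the isomorphism supplied by Theorem \ref{main-theorem}. First I would record that, by that theorem, there is an isomorphism $V\setminus\varphi(D)\cong W\setminus\rho(\tilde E)$, and that $\rho(\tilde E)=W\cap\langle F\rangle$ is a hyperplane section of $W=W_5\subset\PP^7$. Indeed $\langle F\rangle$ is a hyperplane $\PP^6\subset\PP^7$, since a surface $F$ of either type \ref{case-g=10} or \ref{case-g=9} spans the $\PP^6$ containing it (a rational normal quintic scroll, respectively an anticanonically embedded sextic del Pezzo surface, is non-degenerate in $\PP^6$).

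Next I would apply Theorem \ref{thm:W-cyl} to the hyperplane section $M=\rho(\tilde E)$. It produces a cylinder $U\cong Z\times{\mathbb A}^1$ contained in $W\setminus\rho(\tilde E)$. Pulling $U$ back through the isomorphism of Theorem \ref{main-theorem}, I obtain an open subset $U'\subset V\setminus\varphi(D)\subset V$ with $U'\cong U\cong Z\times{\mathbb A}^1$; thus $V$ contains a cylinder.

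It remains to check that $U'$ is a \emph{principal} Zariski open subset of $V$, as required by the definition of cylindricity recalled in the introduction. Since $\varphi(D)$ is a hyperplane section of $V\subset\PP^{g+2}$, the complement $V\setminus\varphi(D)$ is affine; and $U$, being a cylinder inside $W\setminus\rho(\tilde E)$, is a principal open subset there, so its image $U'$ is a principal open subset of the affine variety $V\setminus\varphi(D)$, hence the complement in $V$ of an effective divisor. Therefore $V$ is cylindrical. I do not expect any real obstacle here: the argument is a formal transfer of structure along the isomorphism of Theorem \ref{main-theorem}, and the only point to watch is this last bookkeeping, which is immediate because $\varphi(D)$ is a hyperplane section and a principal open subset of an affine open of $V$ is again principal in $V$.
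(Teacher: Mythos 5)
Your proposal is correct and takes essentially the same route as the paper: apply Theorem \ref{thm:W-cyl} to the hyperplane section $M=\rho(\tilde E)$ and transport the resulting cylinder through the isomorphism $V\setminus\varphi(D)\cong W\setminus\rho(\tilde E)$ of Theorem \ref{main-theorem}. Your additional bookkeeping on principality, which the paper leaves implicit, is harmless and sound, since $\varphi(D)$ is a hyperplane section and $\rk\Pic(V)=1$, so any open subset of $V$ with purely divisorial complement is principal.
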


\begin{proof} Since $M=\rho(\tilde E)$ is a hyperplane section of $W$, the complement
$W\setminus \rho(\tilde E)$ contains a cylinder. Hence also
$V\setminus\varphi(D)\cong W\setminus\rho(\tilde E)$ does, and so, 
$V$ is cylindrical.
\end{proof}

\section{Preliminaries}\label{sec:preliminaries} 
\begin{sit} 
Recall the following notation, see e.g. \cite[\S 3]{Prokhorov-Zaidenberg-2014}.
There are two types of planes in the Grassmannian
$\Gr(2, 5)$, namely, the Schubert varieties $\sigma_{3,1}$ and
$\sigma_{2,2}$ (\cite[Ch.\ 1, \S 5]{Griffiths-Harris-1978}), where
\begin{itemize}
\item
$\sigma_{3,1}=\{l\in \Gr(2, 5)\mid p\in l \subset h\}$ with $h\subset \PP^4$ a fixed
hyperplane and $p\in h$ a fixed point;
\item
$\sigma_{2,2}=\{l\in \Gr(2, 5)\mid l \subset e\}$ with $e\subset \PP^4$ a fixed
plane.
\end{itemize}
In the terminology of \cite[\S 10]{Fujita-620281}, the $\sigma_{3,1}$-planes (the $\sigma_{2,2}$-planes, 
respectively) are called planes
of {\em vertex type} (of {\em non-vertex type}, respectively).
\end{sit}

\begin{sit}
Let $W=W_5\subset \PP^7 $ be a del Pezzo fourfold of index $3$ and degree $5$. Due to \cite{Fujita-620281} 
such a variety is 
unique up to isomorphism and can be realized as
a section of $\Gr(2,5)\subset \PP^9$
by two general hyperplanes.
By the Lefschetz hyperplane section theorem $\Pic(W)\cong \ZZ$. We have $-K_W=3H$, where
$H$ is the ample generator of $\Pic(W)$.
The variety $W$ is an intersection of quadrics (see
\cite[Ch.\ 1, \S 5]{Griffiths-Harris-1978}).
\end{sit}

The following proposition proven in \cite{Todd1930} (see also \cite[Sect.\ 2]{Fujita-1986-1}) 
deals with the planes in the fourfold $W=W_5$.

\begin{proposition}
\label{Proposition-2.2.}
Let $W=W_5\subset \PP^7 $ be a Fano fourfold of index $3$ and degree $5$.
Then the following hold.
\begin{enumerate}
\item
$W$ contains a unique $\sigma_{2,2}$-plane $\Xi$, a one-parameter
family $(\Pi_t)$ of $\sigma_{3,1}$-planes, and no further plane.

\item
Any $\sigma_{3,1}$-plane $\Pi$ meets $\Xi$ along a tangent line to a fixed conic
$\delta\subset \Xi$.

\item
Any two $\sigma_{3,1}$-planes $\Pi'$ and $\Pi''$ meet at a point $p\subset \Xi \setminus \delta$.

\item
Let $R$ be the union of all $\sigma_{3,1}$-planes on $W$.
Then $R$ is a hyperplane section of $W$ and $\Sing R = \Xi$.

\item
There is a 1-parameter family of lines in $W$ through each point in $W$. A line $l\subset W$ meets the plane $\Xi$ if and 
only if $l\subset R$, and then $l$ is contained in a plane in $R$. 
\end{enumerate}
\end{proposition}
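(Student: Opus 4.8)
The plan is to translate everything into the linear algebra of the pencil of $2$-forms cutting out $W$, writing $W=\Gr(2,5)\cap L$ with $\Gr(2,5)\subset\PP(\wedge^2 V)=\PP^9$, $V=\CC^5$, and $L\cong\PP^7$ the common zero locus of a general pencil $\langle\omega_1,\omega_2\rangle$ of skew forms $\omega_i\in\wedge^2 V^*$. A point $l=[v\wedge w]\in\Gr(2,5)$ lies in $W$ precisely when the $2$-plane $\langle v,w\rangle\subset V$ is isotropic for the whole pencil. I would first record the two resulting dictionary entries: a $\sigma_{2,2}$-plane $\{l\subset \PP(E)\}$ lies in $W$ iff the $3$-space $E\subset V$ is totally isotropic for the pencil, while a $\sigma_{3,1}$-plane $\{p\in l\subset h\}$ lies in $W$ iff its vertex $v_p$ satisfies $\omega_1(v_p,-)\parallel\omega_2(v_p,-)$ --- equivalently $v_p\in\ker\omega_\mu$ for some member $\omega_\mu$ --- and $h=\PP(v_p^{\perp})$ is the common hyperplane cut out by these functionals.

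Next I would analyse the kernels. For a general pencil every member $\omega_\mu$ has rank $4$, so $\ker\omega_\mu$ is a line $[k_\mu]$; by Cramer's rule its coordinates are the $4\times4$ sub-Pfaffians of $\omega_1+\mu\omega_2$, which are quadratic in $\mu$, so $\mu\mapsto[k_\mu]$ traces a conic $\hat\delta\subset\PP(V)$ spanning a plane $\PP(E)$ with $E=\mathrm{span}(k_\mu)$. The identity $\omega_\mu(k_\mu,k_{\mu'})=\omega_{\mu'}(k_\mu,k_{\mu'})=0$ for $\mu\neq\mu'$, together with the fact that $\omega_\mu,\omega_{\mu'}$ span the pencil, shows that $E$ is totally isotropic; this produces the $\sigma_{2,2}$-plane $\Xi=\{l\subset\PP(E)\}$ and, via the dictionary, the family $(\Pi_\mu)$ of $\sigma_{3,1}$-planes with vertices on $\hat\delta$. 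Uniqueness in (i) is then immediate: any $3$-dimensional subspace isotropic for a rank-$4$ form must contain its kernel, so any isotropic $3$-space contains every $k_\mu$ and hence equals $E$; and any $\sigma_{3,1}$-plane in $W$ has vertex a kernel vector, hence is one of the $\Pi_\mu$. For (ii) and (iii) I would use that $E\subset H_\mu:=v_p^{\perp}$ (again because $E$ is isotropic), so $\Pi_\mu\cap\Xi$ is the pencil of lines in $\PP(E)$ through $[k_\mu]$, which under the duality $\Xi\cong\check\PP(E)$ is the tangent line to the conic $\delta$ dual to $\hat\delta$; and $\Pi_\mu\cap\Pi_{\mu'}$ is the single line $\langle k_\mu,k_{\mu'}\rangle\subset\PP(E)$, a chord of $\hat\delta$ and hence a point of $\Xi\setminus\delta$.

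For (iv) the substantive point --- and the step I expect to be the main obstacle --- is that all the planes $\Pi_\mu$ lie in a common hyperplane $\PP^6\subset\PP^7$, equivalently that there is a form $\Phi\notin\langle\omega_1,\omega_2\rangle$ with $\Phi(k_\mu,-)\parallel\omega_1(k_\mu,-)$ along the conic. I would establish this either by a direct computation in a Kronecker normal form of the pencil, exhibiting such a $\Phi$, or, more cheaply, by the Picard-theoretic argument that $R=\bigcup_\mu\Pi_\mu$ is an irreducible threefold, so $[R]=mH$ in $\Pic(W)=\ZZ H$, and a computation of $\deg R$ forces $m=1$; then $R\subseteq W\cap\PP^6$ and equality follows by dimension and irreducibility. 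Granting that $R$ is a hyperplane section, the identification $\Sing R=\Xi$ is combinatorial: a point $l\in R$ lies on exactly as many planes $\Pi_\mu$ as there are kernel vectors in the $2$-plane $l$; a line $l\not\subset\PP(E)$ contains at most one $k_\mu$ (two would force $l\subset E$), so $R$ is smooth off $\Xi$, whereas a general point of $\Xi$, being a line in $\PP(E)$ meeting $\hat\delta$ twice, lies on two sheets of $R$ and is therefore singular.

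Finally, for (v) I would parametrise the lines of $W$ through a point. A line of $\Gr(2,5)$ is a flag $(U_1\subset U_3)$ with $\dim U_1=1$, $\dim U_3=3$, and it lies in $W$ iff $U_3\subset a^{\perp_{\omega_1}}\cap a^{\perp_{\omega_2}}$, where $a$ spans $U_1$. For $a$ not a kernel vector this intersection is $3$-dimensional and determines $U_3$ uniquely, giving a $\PP^1=\PP(U)$ of lines through each point $U\in W$. The equivalence with $\Xi$ then rests on the dichotomy that if $a\in E$ then $E\subseteq a^{\perp_{\omega_1}}\cap a^{\perp_{\omega_2}}$, forcing $U_3=E$ and $l\subset\Xi$, while if $a$ is a kernel vector then $l\subset\Pi_\mu$; conversely, if $a\notin E$ and $a$ is not a kernel vector, the lines $\overline{[a][v]}$ for $U=\langle a,v\rangle\in l$ meet the plane $\PP(E)$ in a single moving point that does not stay on $\hat\delta$, so some $U\in l$ avoids the conic and $l\not\subset R$. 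This yields both the stated equivalence and the fact that such an $l$ is contained in a plane ($\Xi$ or some $\Pi_\mu$) of $R$.
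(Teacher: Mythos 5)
The paper itself gives no proof of this proposition: it is quoted from Todd (1930) and Fujita (1986, Sect.~2). So your attempt has to be judged on its own terms, and your framework --- realizing $W$ as the pencil-isotropic locus in $\Gr(2,5)$ and translating both types of planes into linear algebra of the pencil $\langle\omega_1,\omega_2\rangle$ --- is the right one. Parts (i), (ii), (iii) are essentially complete and correct as you present them, and the vertex dichotomy underlying (v) (for $a\in E$ off the conic the common orthogonal $a^{\perp}$ equals $E$; for $a=k_\mu$ any isotropic $U\ni a$ lies in $H_\mu$) is sound, modulo one slip: a line $\PP(U)$ and the plane $\PP(E)$ in $\PP^4$ are in general \emph{disjoint}, so your ``single moving point'' is not literally correct; the repaired statement is that at most two members $U\in l$ can contain a kernel vector, because all kernel vectors inside $U_3$ lie in the finite set $\hat\delta\cap\PP(U_3)$ when $U_3\neq E$.

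The genuine gap is in (iv), and you flagged it yourself: the crux --- that $R=\bigcup_\mu\Pi_\mu$ is a hyperplane section --- is never actually proved. You name two strategies (a Kronecker normal form computation; $[R]=mH$ plus ``a computation of $\deg R$ forces $m=1$'') but execute neither, and the degree computation is not a one-liner: it amounts to computing the degree of the image of a $\PP^2$-bundle over $\PP^1$, e.g.\ via the degree of the rank-$3$ subbundle $k_\mu\wedge H_\mu\subset\wedge^2V\otimes\mathcal{O}_{\PP^1}$. There is a second, independent gap in the same part: your argument for $\Sing R\subseteq\Xi$ only shows that each point of $R\setminus\Xi$ lies on exactly one plane $\Pi_\mu$, i.e.\ that the incidence $\PP^2$-bundle $\{(\mu,U):U\in\Pi_\mu\}\to R$ is bijective there; a bijective morphism from a smooth variety need not have smooth image (normalization of a cuspidal curve), so you still need injectivity of the differential or a tangent-space computation. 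The irony is that the main gap is fillable by the dichotomy you already proved for (v): an isotropic $2$-plane $U$ with $U\cap E\neq 0$ lies either in $\Xi$ (if $U\subset E$, or if its intersection with $E$ avoids $\hat\delta$, which forces $U\subset a^{\perp}=E$) or in some $\Pi_\mu$; combined with $\Xi\subset R$ (the tangent lines of $\delta$ sweep out $\Xi$), this gives the set-theoretic identity $R=\{U\in W: U\cap E\neq 0\}=W\cap\sigma_1(E)$, where $\sigma_1(E)$ is cut out on $\Gr(2,5)$ by the single Pl\"ucker-linear equation $u_1\wedge u_2\wedge e_1\wedge e_2\wedge e_3=0$. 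Hence $R$ is a hyperplane section with no degree computation at all, and the description of $R$ as $W\cap\sigma_1(E)$ also makes the singular-locus analysis a concrete tangent-space check rather than an appeal to ``sheets.''
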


%
By abuse of notation, the cohomology class associated with 
an algebraic subvariety will be denoted by the same letter as the subvariety itself.
By the Lefschetz hyperplane section theorem, the group $H^4(W,\ZZ)$ is torsion free, since the group $H^4(\Gr(2,5),\ZZ)$ is.
In the next lemma we describe a natural basis in $H^4(W,\ZZ)$,
see \cite[Cor.\ 4.2 and 4.7]{Prokhorov-Zaidenberg-2014}. 

\begin{lemma}\label{lem:H4} The group $H^4(W,\ZZ)$
is freely generated by the classes of the planes $\Xi$ and $\Pi$, where
\begin{equation}\label{eq:ci}
\Pi^2 = 1,\quad \Xi^2 = 2, \quad \Pi\cdot\Xi = -1,\quad\mbox{and}\quad c_2(W)=9\Xi+13\Pi\,.
\end{equation}
\end{lemma}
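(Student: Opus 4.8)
The plan is to compute the intersection numbers in $H^4(W,\ZZ)$ directly from the Schubert calculus on $\Gr(2,5)$ and the geometry described in Proposition \ref{Proposition-2.2.}, then to pin down $c_2(W)$ by expressing it in the basis $\{\Xi,\Pi\}$ and solving for the coefficients using test classes. First I would record that, in $H^4(\Gr(2,5),\ZZ)$ with the standard Schubert basis $\sigma_{2,2},\sigma_{3,1}$, the Pieri/Giambelli relations give the products of Schubert cycles; restricting to $W$ (a codimension-$2$ linear section) and using that the restriction map $H^4(\Gr(2,5))\to H^4(W)$ is an isomorphism by Lefschetz, the classes $\Xi$ and $\Pi$ are identified with the restrictions of $\sigma_{2,2}$ and $\sigma_{3,1}$. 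From the self-intersection and intersection formulas for these two Schubert classes in $\Gr(2,5)$ (namely $\sigma_{2,2}^2=1$, $\sigma_{3,1}^2=2$, $\sigma_{2,2}\cdot\sigma_{3,1}=1$ against the point class, etc.), together with the degree-$5$ fundamental class of $W$, I would extract the three numbers $\Pi^2=1$, $\Xi^2=2$, $\Pi\cdot\Xi=-1$. The sign in $\Pi\cdot\Xi=-1$ is worth checking against the geometry: by Proposition \ref{Proposition-2.2.}(ii) a $\sigma_{3,1}$-plane meets $\Xi$ along a line, so the naive set-theoretic intersection suggests a positive contribution, and one must verify that the excess-intersection correction (the two planes sharing a line, not a point, inside the fourfold) produces the value $-1$.

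For the computation of $c_2(W)$, the plan is to write $c_2(W)=a\Xi+b\Pi$ and determine $a,b$ by intersecting both sides with two independent test surfaces whose Chern-class pairings I can compute independently. The natural test classes are again $\Xi$ and $\Pi$ themselves: pairing gives the linear system
\begin{equation*}
c_2(W)\cdot\Xi = a\,\Xi^2+b\,\Pi\cdot\Xi = 2a-b,\qquad
c_2(W)\cdot\Pi = a\,\Xi\cdot\Pi+b\,\Pi^2 = -a+b,
\end{equation*}
so it suffices to evaluate the left-hand sides. I would compute $c_2(W)\cdot\Xi$ and $c_2(W)\cdot\Pi$ via the normal bundle sequence for each plane $P\cong\PP^2$ sitting in $W$: from $0\to T_P\to T_W|_P\to N_{P/W}\to 0$ one gets $c_2(W)|_P=c_2(T_P)+c_1(T_P)c_1(N)+c_2(N)$, and integrating over $P$ reduces everything to $\chi(P)=3$ plus the degrees of the normal bundle $N_{P/W}$, which differ for the two plane types. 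This is where the distinction between vertex-type ($\sigma_{3,1}$) and non-vertex-type ($\sigma_{2,2}$) planes enters decisively, since their normal bundles in $W$ are different (one can read these off from the standard description of $W$ as a linear section of $\Gr(2,5)$, using $c(T_{\Gr(2,5)})$ restricted to each Schubert plane and the adjunction for the two hyperplane sections).

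The main obstacle I expect is the correct bookkeeping of the normal-bundle Chern numbers for the two types of planes, and in particular getting the signs right so that the resulting coefficients come out as $a=9$, $b=13$. Because $\Xi$ is rigid and the $\Pi_t$ move in a one-parameter family, their normal bundles behave asymmetrically: $N_{\Pi/W}$ must have a trivial or positive summand accounting for the deformations, whereas $N_{\Xi/W}$ is more negative, and keeping track of this via the exact sequences on $\Gr(2,5)$ — rather than guessing — is the delicate point. As a consistency check I would verify that the resulting value of $c_2(W)$ is compatible with the known topological Euler characteristic and the second Chern number $c_2(W)\cdot H^2$ (equivalently $\int_W c_2(W)\cdot c_1(W)^2$ through the index-$3$ relation $-K_W=3H$), so that the abstractly computed class $9\Xi+13\Pi$ reproduces the correct Chern numbers of the del Pezzo quintic fourfold. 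Once the two linear equations are solved and cross-checked, the lemma follows, the identifications of the intersection form on $\langle\Xi,\Pi\rangle$ being immediate from the Schubert-calculus step above.
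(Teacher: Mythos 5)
Your first step, as written, would fail, and for reasons that are not cosmetic. (i) The Lefschetz theorem does \emph{not} give that the restriction $H^4(\Gr(2,5),\ZZ)\to H^4(W,\ZZ)$ is an isomorphism: for a fourfold linear section it gives isomorphisms in degrees $<4$ and only \emph{injectivity} in the middle degree $4$; surjectivity there is essentially equivalent to the statement being proved, so assuming it is circular. (ii) More seriously, $\sigma_{2,2}$ and $\sigma_{3,1}$ are the classes of the planes themselves, i.e.\ codimension-$4$ classes lying in $H^8(\Gr(2,5),\ZZ)$; they cannot ``restrict'' to degree-$4$ classes on $W$, so the identification of $\Xi,\Pi$ with restrictions of $\sigma_{2,2},\sigma_{3,1}$ is dimensionally impossible, and the products you quote ($\sigma_{2,2}^2=1$, $\sigma_{3,1}^2=2$, $\sigma_{2,2}\cdot\sigma_{3,1}=1$) are not valid Schubert calculus (these products vanish in $\Gr(2,5)$ for dimension reasons). (iii) No purely upstairs Schubert computation can ever output $\Pi\cdot\Xi=-1$, because products of Schubert classes are non-negative combinations of Schubert classes; the $-1$ is intrinsic to $W$, produced by excess intersection along the line $\Xi\cap\Pi$ and the normal bundles \emph{in $W$} --- exactly the computation your plan demotes to a side check. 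The salvageable core of your idea is this: the codimension-$2$ classes $\sigma_2,\sigma_{1,1}$ do restrict to $W$, and by the projection formula applied to $\iota_*\Xi=\sigma_{2,2}$, $\iota_*\Pi=\sigma_{3,1}$ one gets $\sigma_2|_W\cdot\Xi=0$, $\sigma_2|_W\cdot\Pi=1$, $\sigma_{1,1}|_W\cdot\Xi=1$, $\sigma_{1,1}|_W\cdot\Pi=0$, while $(\sigma_2|_W)^2=2$, $(\sigma_{1,1}|_W)^2=1$, $\sigma_2|_W\cdot\sigma_{1,1}|_W=1$ are computed in $\Gr(2,5)$ as $\sigma_2^2\cdot\sigma_1^2$, etc. Since $b_4(W)=2$ (an input you also need), the restricted classes form a $\mathbb{Q}$-basis, one solves $\Xi=2\,\sigma_{1,1}|_W-\sigma_2|_W$, $\Pi=\sigma_2|_W-\sigma_{1,1}|_W$, and the Gram matrix $\Xi^2=2$, $\Pi^2=1$, $\Xi\cdot\Pi=-1$ drops out. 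Equivalently, one computes normal bundles directly: $N_{\Xi/\Gr}\cong(S^*|_\Xi)^{\oplus2}$ gives $c(N_{\Xi/W})=(1+h+h^2)^2/(1+h)^2=1+2h^2$, similarly $c(N_{\Pi/W})=1+h^2$, whence $\Xi^2=2$, $\Pi^2=1$, and the excess formula along the common line $L$ gives $\Xi\cdot\Pi=\deg\bigl(c_1(N_{\Xi/W}|_L)-c_1(N_{L/\Pi})\bigr)=0-1=-1$.

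Two further points. The lemma claims that $\Xi,\Pi$ \emph{freely generate} $H^4(W,\ZZ)$, and your proposal never addresses generation over $\ZZ$: the intersection numbers alone only show that $\ZZ\Xi+\ZZ\Pi$ is a finite-index sublattice. The missing (short) argument is: $H^4(W,\ZZ)$ is torsion free and of rank $2$, its intersection form is unimodular by Poincar\'e duality, and the Gram matrix of $\{\Xi,\Pi\}$ has determinant $2\cdot1-(-1)^2=1$, so the sublattice is everything. By contrast, your strategy for $c_2(W)$ is correct and standard: with the normal bundle data above, $c_2(W)\cdot\Xi=c_2(\PP^2)+c_2(N_{\Xi/W})=3+2=5$ and $c_2(W)\cdot\Pi=3+1=4$ (the cross terms vanish since $c_1(N)=0$ in both cases), and solving $2a-b=5$, $-a+b=4$ gives $a=9$, $b=13$, i.e.\ $c_2(W)=9\Xi+13\Pi$; your proposed consistency check $c_2(W)\cdot H^2=22$ is also correct. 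For calibration: the paper itself gives no proof of this lemma but cites \cite[Cor.~4.2 and 4.7]{Prokhorov-Zaidenberg-2014}, where the computation is carried out essentially along the repaired lines sketched above.
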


\begin{lemma}\label{lem:enumerative}
\begin{enumerate}
\renewcommand\labelenumi{\alph{enumi})}
\renewcommand\theenumi{\textup{\labelenumi}}
\item\label{case-g=10-a}
Let $F\subset W\cap \PP^6$ be a smooth rational quintic scroll.
Then 
\begin{equation*}
F\equiv 2 \Xi+3\Pi\quad\mbox{and}
\quad F\cdot \Xi= 1,\,\, F\cdot\Pi= 1,\,\, c_2(W)\cdot F=22\,.
\end{equation*}

\item\label{case-g=9-a}
Let $F\subset W\cap \PP^6$ be a smooth anticanonically embedded sextic del Pezzo surface. Then
either
\vspace{10pt}
\begin{enumerate}
\itemsep=10pt
\renewcommand\labelenumii{\alph{enumi}\arabic{enumii})}
\renewcommand\theenumii{\textup{-\alph{enumi}\arabic{enumii})}}
\item\label{case-g=9-a-26}
$F\equiv 2 \Xi+4\Pi$ and
$F\cdot \Xi= 0$, $F\cdot\Pi= 2$, $c_2(W)\cdot F=26$, or
\item\label{case-g=9-a-27}
$F\equiv 3 \Xi+3\Pi$ and
$F\cdot \Xi= 3$, $F\cdot\Pi= 0$, $c_2(W)\cdot F=27$.
\end{enumerate}
\end{enumerate}
\end{lemma}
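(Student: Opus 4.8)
The plan is to determine the class $[F]=a\Xi+b\Pi\in H^4(W,\ZZ)$, with $a,b\in\ZZ$ since $\Xi$ and $\Pi$ freely generate $H^4(W,\ZZ)$ by Lemma \ref{lem:H4}, and then to read off every asserted number from the pair $(a,b)$. Once $(a,b)$ is known, the pairing $\Xi^2=2$, $\Pi^2=1$, $\Xi\cdot\Pi=-1$, together with $c_2(W)=9\Xi+13\Pi$, at once gives $F\cdot\Xi=2a-b$, $F\cdot\Pi=-a+b$ and $c_2(W)\cdot F=5a+4b$. Thus the whole lemma reduces to pinning down $a$ and $b$, for which I would produce two independent numerical relations.

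The first relation is the degree. Since $\Xi$ and $\Pi$ are linearly embedded planes, $H^2\cdot\Xi=H^2\cdot\Pi=1$, and solving this against the pairing of Lemma \ref{lem:H4} gives $H^2=2\Xi+3\Pi$, consistently with $H^4=(2\Xi+3\Pi)^2=5=\deg W$. Hence $\deg F=H^2\cdot F=a+b$, which equals $5$ in case \ref{case-g=10-a} and $6$ in case \ref{case-g=9-a}.

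The second relation is the self-intersection of $F$ in $W$. As $F$ is a smooth surface of codimension two in the smooth fourfold $W$, the self-intersection formula gives $F\cdot F=\int_F c_2(N_{F/W})$, and the normal-bundle sequence $0\to T_F\to T_W|_F\to N_{F/W}\to 0$ with the Whitney formula yields
\[
F\cdot F=c_2(W)\cdot F+3\,(H|_F\cdot K_F)+K_F^2-e(F),
\]
where the products $H|_F\cdot K_F$ and $K_F^2$ are taken on $F$, I have used $c_1(T_W)|_F=3H|_F$ and $c_2(T_W)|_F=c_2(W)\cdot F$, and $e(F)$ denotes the topological Euler number. The right-hand side is entirely determined by the surface type: by adjunction the sectional genus $0$ of the quintic scroll gives $H|_F\cdot K_F=-2-5=-7$, while for the anticanonically embedded sextic del Pezzo one has $H|_F\cdot K_F=-K_F^2=-6$; moreover $K_F^2=8$, $e(F)=4$ for a Hirzebruch surface, and $K_F^2=6$, $e(F)=6$ for a degree-six del Pezzo surface. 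The left-hand side is the cohomological expression $F\cdot F=2a^2-2ab+b^2$.

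Equating the two expressions for $F\cdot F$, substituting $c_2(W)\cdot F=5a+4b$, and eliminating $b$ via $a+b=\deg F$, I arrive at a single quadratic in $a$. In case \ref{case-g=10-a} it is $5a^2-21a+22=0$, whose only integral root is $a=2$, forcing $(a,b)=(2,3)$ and hence $F\cdot\Xi=F\cdot\Pi=1$, $c_2(W)\cdot F=22$. In case \ref{case-g=9-a} it reduces to $a^2-5a+6=0$, with the two integral roots $a=2$ and $a=3$, giving $(a,b)=(2,4)$ with $c_2(W)\cdot F=26$, and $(a,b)=(3,3)$ with $c_2(W)\cdot F=27$; this is precisely the alternative stated in \ref{case-g=9-a}. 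The only non-routine point, and the reason the del Pezzo case bifurcates, is exactly that this quadratic admits two admissible integer roots there: the numerical invariants used here cannot separate them, and selecting the value $c_2(W)\cdot F=26$ needed in Theorem \ref{theorem-1}\,\ref{case-g=9} requires the finer geometric analysis carried out later via the no-conic hypothesis. Everything else is routine bilinear algebra in the lattice $H^4(W,\ZZ)$.
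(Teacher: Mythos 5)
Your proposal is correct and takes essentially the same route as the paper's own proof: both expand $F\equiv a\Xi+b\Pi$ in the basis of Lemma \ref{lem:H4}, combine the degree relation $a+b=\deg F$ with the normal bundle sequence and the Whitney formula (your identity $F\cdot F=c_2(W)\cdot F+3\,H|_F\cdot K_F+K_F^2-e(F)$ is exactly the paper's \eqref{eq:c2} after eliminating $c_1(F)^2$), and arrive at the same quadratics $5a^2-21a+22=0$ and $a^2-5a+6=0$ with the same integral roots. The only cosmetic differences are that you substitute $K_F^2$ and the Euler number directly where the paper invokes Noether's formula, and that you derive $H^2=2\Xi+3\Pi$ explicitly rather than quoting the relation $\deg F=a+b$; your closing remark that the lemma's numerics genuinely cannot separate the two del Pezzo types is also consistent with the paper, which states case b) as an alternative and only later singles out type \ref{case-g=9-a-26} geometrically.
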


\begin{proof}
By Lemma \ref{lem:H4} one can write $F\equiv a \Xi+b\Pi$, where
\begin{equation}\label{eq:a-b}
a+b=\deg F,\qquad c_2(W)\cdot F=5a+4b.
\end{equation}
From the exact sequence
\begin{equation*}
0\longrightarrow \mathscr T_F \longrightarrow \mathscr T_W \longrightarrow \mathscr N_{F/W}
\longrightarrow 0\,
\end{equation*}
we deduce
\begin{equation}\label{eq:c1}
c_1(W)|_F= c_1(F)+ c_1( \mathscr N_{F/W})\,
\end{equation}
and
\begin{equation}
\begin{aligned}\label{eq:c2}
{\quad} c_2(W)\cdot F &= c_2(F)+ c_1(F)\cdot c_1( \mathscr N_{F/W}) +c_2(\mathscr N_{F/W})
\\[7pt]
&= c_2(F)- c_1(F)^2+ c_1(F)\cdot c_1(W)|_F +c_2(\mathscr N_{F/W})\,.\\
\end{aligned}
\end{equation}
The Noether formula for the rational surface $F$ can be written as follows:
\begin{equation*}
c_2(F)- c_1(F)^2=2c_2(F) -12\,.
\end{equation*}
Note that
\begin{equation*}
c_2(\mathscr N_{F/W})= F^2= 2a^2+b^2-2ab\,.
\end{equation*}
Since $c_1(W)=\mathcal{O}_W(3)$, from \eqref{eq:a-b} and \eqref{eq:c2} we obtain
\begin{multline*}
c_2(W)\cdot F=5a+4b=2c_2(F) -12 + c_1(F)\cdot \mathcal{O}_W(3)|_F + 2a^2+b^2-2ab\,.
\end{multline*}
In case \ref{case-g=10-a} using \eqref{eq:a-b} and the latter equality we get $b=5-a$ and
\begin{eqnarray*}
c_2(W)\cdot F&=&20+a=
5a^2 - 20a + 42\,,\quad\mbox{hence}\quad a=2\,.
\end{eqnarray*}
Similarly, in case \ref{case-g=9-a} we have $b=6-a$ and
\begin{eqnarray*}
c_2(W)\cdot F=24+a=5a^2-24a+54
\,,\quad\mbox{hence}\quad a\in\{ 2, 3\}\,.
\end{eqnarray*}
Now the assertions follow.
\end{proof}

\begin{remark}\label{lem:unique-hyperplane} For a surface
$F$ as in Lemma \ref{lem:enumerative} we have $\dim \langle F\rangle=6$. Hence $F$ is contained in a 
unique hyperplane section $\langle F\rangle\cap W\subset \PP^7$.
\end{remark}

\section{Construction of quintic and sextic surfaces $F\subset W$}\label{sec:constructions}
In this section we prove the existence of surfaces $F$ satisfying the assumptions of Theorem \ref{theorem-1}. 
Our main results can be stated as follows.

\begin{proposition}\label{lemma-existence-surfaces}
The quintic fourfold $W\subset\PP^7$ admits hyperplane sections which contain
\begin{enumerate}
\item[a)] 
a rational quintic scroll $F=F_5\subset \PP^6$,
\end{enumerate}
and other ones which contain
\begin{enumerate} 
\item[b)]
an anticanonically embedded sextic del Pezzo surface $F=F_6\subset \PP^6$
of type \ref{case-g=9-a-26}.
\end{enumerate}
In both cases, the surface $F$ can be chosen so that none of the planes in $W$ meets $F$ along a 
(possibly, degenerate) conic.
\end{proposition}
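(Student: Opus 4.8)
The plan is to produce $F$ directly on $W$; by Remark~\ref{lem:unique-hyperplane} it then automatically spans a $\PP^6$ and lies in a single hyperplane section $Y=W\cap\langle F\rangle$, a (possibly singular) del Pezzo threefold of degree $5$. By Lemma~\ref{lem:enumerative} the class of a smooth quintic scroll on $W$ is forced to be $2\Xi+3\Pi$, and that of a smooth anticanonical sextic del Pezzo surface is one of the two types \ref{case-g=9-a-26}, \ref{case-g=9-a-27}; so the actual content is threefold: (i)~to exhibit such surfaces on $W$ at all, (ii)~in the sextic case to land in type~\ref{case-g=9-a-26}, and (iii)~to arrange that no plane of $W$ meets $F$ along a (possibly degenerate) conic. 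Task~(ii) is purely numerical once existence is known: since $F\cdot\Xi=2a-b$ for $F\equiv a\Xi+b\Pi$, type~\ref{case-g=9-a-26} is singled out by $F\cdot\Xi=0$, equivalently (for an effective intersection of a surface and a plane meeting in points) by $F\cap\Xi=\emptyset$.

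For part~a) I would start from the abstract Hirzebruch surface $\FF_1=\PP(\mathcal O\oplus\mathcal O(1))$ in its quintic scroll embedding by $|e+3f|$, where $e$ is the $(-1)$-section and $f$ a fibre, and locate a copy of it on $W$. Since under this embedding the fibres $f$ go to lines of $\PP^6$, the geometrically natural construction is to sweep $F$ out by a suitable $1$-parameter family of lines of the threefold $Y$, arranged so that the total surface is smooth of degree $5$ with the numerical type of $\FF_1$. Once one such $F$ is exhibited, I would use the normal bundle sequence $0\to\mathscr T_F\to\mathscr T_W|_F\to\mathscr N_{F/W}\to 0$ together with a vanishing $H^1(F,\mathscr N_{F/W})=0$ to conclude that $F$ moves in a family of the expected dimension whose general member is again a smooth quintic scroll; the intersection numbers $F\cdot\Xi=F\cdot\Pi=1$ and $c_2(W)\cdot F=22$ are then read off from Lemma~\ref{lem:enumerative}.

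For part~b) I would realize $F_6$ as $\PP^2$ blown up at three general points, anticanonically embedded in $\PP^6$, and again place it on $W$, now requiring in addition that $F$ be disjoint from the unique $\sigma_{2,2}$-plane $\Xi$. By the numerical observation above this disjointness forces $F\equiv 2\Xi+4\Pi$, and hence $c_2(W)\cdot F=26$, i.e.\ exactly type~\ref{case-g=9-a-26} rather than \ref{case-g=9-a-27}. As in part~a), I would first exhibit one such surface and then deform it within $W$, controlling smoothness and the class through the normal bundle sequence, the remaining numerics being formal consequences of Lemma~\ref{lem:enumerative}.

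Finally, I would secure the genericity condition. A plane $P\subset W$ meeting $F$ along a conic forces $F$ to contain a degree-$2$ curve whose span $\langle P\rangle\cong\PP^2$ lies in $W$; by Proposition~\ref{Proposition-2.2.} such a $P$ is either $\Xi$ or one of the $\sigma_{3,1}$-planes $\Pi_t$. The conics available inside $F$ are few and explicit --- for the scroll, the directrix $e$ (of degree $e\cdot(e+3f)=2$) together with the degenerate pairs of fibres $2f$; for the sextic del Pezzo surface, the members of its three pencils of conics --- so ``$F$ meets some plane of $W$ along a conic'' is a proper closed condition on the families built above. I would rule it out either by a dimension count, comparing the finite union of $1$-parameter families swept by the distinguished conics of $F$ with the $1$-parameter family $(\Pi_t)$ and the single plane $\Xi$, or by exhibiting one surface avoiding all these incidences and propagating the property by semicontinuity. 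I expect the main obstacle to be exactly this coordination: producing a single honest example on $W$ that is smooth, of the correct class (in particular of type~\ref{case-g=9-a-26}, which pins down its position relative to $\Xi$), and at the same time avoids the one-parameter incidence with the planes $(\Pi_t)$; showing that all three requirements can be met simultaneously by a general member is the heart of the matter.
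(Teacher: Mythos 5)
There is a genuine gap, and you have in fact named it yourself in your last sentence: the existence of the surfaces on $W$ is never established. For part~a) the construction is ``locate a copy of it on $W$'' by sweeping out lines of $Y$ ``arranged so that the total surface is smooth of degree $5$''; for part~b) it is ``place it on $W$, now requiring in addition that $F$ be disjoint from $\Xi$.'' These phrases assert exactly what the proposition claims, with no mechanism for producing even one such surface. Everything else in your plan is downstream of this: the deformation step needs an unproven vanishing $H^1(F,\mathscr N_{F/W})=0$ \emph{and} a first member to deform; the semicontinuity argument for the genericity condition likewise propagates a property from an exhibited example, which is missing; and the alternative dimension count requires knowing the dimension of the family of quintic scrolls (resp.\ sextic del Pezzo surfaces) in $W$ and of the incidence locus with the planes $(\Pi_t)$ and $\Xi$, none of which is computed. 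A further point you do not address: for part~a) one must show the constructed quintic scroll is \emph{normal}, i.e.\ spans a $\PP^6$ rather than a $\PP^5$; in the paper this requires a separate argument (Lemma~\ref{lem:quintic scroll}, via the non-normal-locus formula $K_{\FF_1}\sim\nu^*\omega_F-B$), so it cannot be absorbed into the word ``smooth.''

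For comparison, the paper's proof is not a general-position/deformation argument at all but two explicit birational constructions. For b): start from the flag sextic del Pezzo threefold $X_6\subset\PP^2\times\PP^2\subset \PP^7$, project from a general point $P$ to obtain the $2$-nodal quintic del Pezzo threefold $U_5\subset\PP^6$, which is then realized as a hyperplane section of a smooth $W_5$ (as a section of $\Gr(2,5)$ by $\Lambda$ and a general combination $\Sigma'$ of two Schubert divisors); the surface $F$ is the image of a general $(1,1)$-divisor section $Z\subset X$, it meets the unique plane $\mathcal P\subset U$ in exactly the two nodes, which pins down type \ref{case-g=9-a-26} \emph{and} gives the no-conic condition structurally, since any plane meeting $F$ along a conic must lie in $U=W\cap\langle F\rangle$ and hence equal $\mathcal P$. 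For a): take two cubic scrolls $S,S'$ in a smooth quadric $Q^4\subset\PP^5$ meeting along a smooth conic (Lemmas \ref{lem:cubic scrolls-1}--\ref{lem:cubic scrolls-2}), reconstruct $W$ from the pair $(Q^4,S)$ via the inverse Sarkisov link of Proposition \ref{prop:PZ4.11}, and take $F$ to be the image of the proper transform of $S'$; the no-conic condition is then proved by a dedicated argument (Lemma~\ref{lem:no plane conic}) showing such a conic would have to be the exceptional section $\sigma_F$ and deriving a contradiction through the projection from $l$. Your reduction of type \ref{case-g=9-a-26} to $F\cdot\Xi=0$ and your inventory of the conics lying on $F$ are correct and consistent with the paper, but they are the easy, purely numerical part; the constructive core is absent.
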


\begin{proof}[Proof of Proposition \textup{\ref{lemma-existence-surfaces}} {\rm b).}]

We start with a smooth sextic del Pezzo threefold $X=X_6\subset \PP^7$. Up to isomorphism, 
there is a unique such threefold $X$ with $\rk\Pic X=2$ (\cite{Fujita1980}, \cite{Iskovskikh-Prokhorov-1999}). 
In fact, the latter is the threefold which parametrizes the complete flags in $\PP^2$.
Consider the following diagram (\cite[\S 8]{Prokhorov-GFano-1}):
\begin{equation*}
\xymatrix{
&\tilde X\ar[dr]\ar[dl]&
\\
X\ar@{-->}[rr]&& U\subset \PP^{6}
}
\end{equation*}
where $U=U_5\subset \PP^{6}$ is a quintic del Pezzo threefold
with two nodes (ordinary double points), $X \dashrightarrow U=U_5\subset \PP^{6}$
is the projection from a general point $P\in X$, and $\tilde X\to X$ is the blowup of $P$.
Recall that $X$ can be realized as a smooth divisor
of bidegree $(1,1)$ in $\PP^2\times \PP^2$ (see, e.g., \cite{Fujita1980},
\cite{Iskovskikh-Prokhorov-1999}).
The natural projections ${\rm pr}_1,{\rm pr}_2: X\to \PP^2$ define
$\PP^1$-bundles with total space $X$.
Let $l_i$, $i=1,2$, be the corresponding fibers passing through $P$.
Then $l_1, l_2$ are contracted to the nodes $P_1,P_2\in U$.
The threefold $U$ contains a unique
plane $\mathcal{P}$, and this plane is
the image of the exceptional divisor of $\tilde X\to X$ (\cite[\S 8]{Prokhorov-GFano-1}).

The intersection $Z$ of $X$ with a general divisor
of bidegree $(1,1)$ in $\PP^2\times \PP^2$ is a smooth sextic del Pezzo
surface $Z\cong Z_6\subset \PP^6$.
We can choose $Z$ so that $P\not\in Z$. Let $F\subset U$ be the image of $Z$.
Then $F=F_6\subset W\cap \PP^6$ is an anticanonically embedded smooth sextic del Pezzo surface,
and $F\cap \mathcal{P}=\{P_1,P_2\}$.

Note that the del Pezzo quintic threefold $U=U_5\subset \PP^{6}$ with two nodes as above
is unique up to isomorphism.
On the other hand, such a variety can be obtained as a section of $\Gr(2,5)\subset \PP^9$
by a general hyperplane $\Lambda$ and two general Schubert subvarieties $\Sigma_1, \Sigma_2$ of codimension one in $\Gr(2,5)$
(see \cite{Todd1930}, \cite{Fujita-1986-1}).
Letting $\Sigma'$
be a general linear combination of $\Sigma_1$ and $\Sigma_2$, the section of $\Gr(2,5)\subset \PP^9$ 
by $\Lambda$ and $\Sigma'$ is smooth. Therefore, this section is
a del Pezzo fourfold $W=W_5\subset \PP^7$. By construction, $W$ contains $F$ and $\mathcal{P}$. 
Since $F\cdot \mathcal{P}=2$ in $W$, it follows that $F$ is of type \ref{case-g=9-a-26}, see (2) in Lemma \ref{lem:H4} 
and Lemma \ref{lem:enumerative}.

Since $U$ contains a unique
plane $\mathcal{P}$, and $F$ meets $\mathcal{P}$ just in two points and not along a conic,
$F$ satisfies the last condition of Proposition \ref{lemma-existence-surfaces}. Indeed, it is easily seen that
$U=W\cap\langle F\rangle$. If $\mathcal{T}$ is a plane, which meets $F$ along a conic, then $\mathcal{T}$ is contained in $U$. 
So, $\mathcal{T}=\mathcal{P}$ due to the uniqueness of $\mathcal{P}$. The latter equality leads to a contradiction, 
since $\mathcal{P}\cap F$ is not a conic. 
\end{proof}

To show
Proposition \ref{lemma-existence-surfaces} {\rm a)} we need to recall Proposition 4.11 in \cite{Prokhorov-Zaidenberg-2014}. 
It describes a construction (borrowed in \cite[Sect.\ 10]{Fujita-620281} and
\cite{Prokhorov1994}), which allows to recover the fourfold $W$ via a Sarkisov link starting with a certain 2-dimensional 
cubic scroll $S$ in $\PP^5$ contained in a smooth quadric $Q^4$.

\begin{proposition}\label{prop:PZ4.11}
Let as before $W=W_5\subset \PP^7 $ be a del Pezzo quintic fourfold, and let $l\subset W$ be a line, which is 
not contained 
in any plane in $W$, that is, $l\not\subset R$.
Then there is a commutative diagram
\begin{equation}\label{eq:diagram-Q4}
\vcenter{
\xymatrix{
&\hat D\ar[dl]&\hookrightarrow&\widehat W\ar[dr]^{\hat\varphi}\ar[dl]_{\hat\rho}&\hookleftarrow& \hat E\ar[dr]
\\
l&\hookrightarrow&W\ar@{-->}[rr]^{\hat\phi} &&\quad Q^4 &\hookleftarrow& S
}}
\end{equation}
where
\begin{enumerate}
\item
$\hat\rho: \widehat W\longrightarrow W$ is the blowup of $l$,
$\hat\phi: W\dashrightarrow \PP^5$ is the projection from $l$,
$Q^4=\hat\phi(\widehat W)\subset \PP^5$ is a smooth quadric, and
$\hat\varphi:\widehat W\longrightarrow Q^4$ is
the blowup of a cubic scroll $S\subset Q^4\subset \PP^5$ with exceptional divisor $\hat E$;

\item
the morphism $\hat\rho: \widehat W \longrightarrow W\subset \PP^7$ is defined by the linear system 
$|\hat\rho^* H - \hat D|$,
where $H\subset W$ is a hyperplane section and $\hat D =\hat\rho^{-1}(l)$
is the exceptional divisor of $\hat\rho$;

\item
$\hat\varphi(\hat D) = Q^4\cap \langle S\rangle$ is a quadric cone, where $\langle S\rangle\cong\PP^4$
is the linear span of $S$ in $\PP^5$;

\item
the image $\hat\rho(\hat E)\subset W$
is a hyperplane section of $W$ singular along $l$ and swept out by lines in $W$ meeting $l$;

\item
for a hyperplane section $\mathcal{L}$ of $Q^4$ we have on $\widehat W$
\begin{equation*} 
\hat\varphi^* \mathcal{L}\sim \hat D - \hat E\quad\mbox{and}\quad \hat\rho^*H\sim\hat D-2\hat E\sim \hat\varphi^* 
\mathcal{L}-\hat E\sim 2 \hat\varphi^* \mathcal{L}- \hat D\,.
\end{equation*}
\end{enumerate}
Conversely, given a pair $(Q^4,S)$, where $Q^4\subset \PP^5$ is a smooth quadric fourfold and $S\subset Q^4$is a cubic 
scroll in $ \PP^5$ such that the hyperplane section $Q^4\cap \langle S\rangle$ is a quadric cone, one can recover the 
quintic fourfold $W$ together with diagram \eqref{eq:diagram-Q4}) satisfying {\rm (i)-(v)}.
\end{proposition}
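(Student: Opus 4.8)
The plan is to realise the diagram \eqref{eq:diagram-Q4} as a two–ray game on the blowup $\hat\rho\colon\widehat W=\operatorname{Bl}_lW\to W$, reading off every assertion from the two extremal contractions of $\widehat W$. Write $\hat H=\hat\rho^*H$ and let $\hat D$ be the exceptional divisor. Since $l\cong\PP^1$ and $-K_W\cdot l=3H\cdot l=3$, adjunction gives $\deg N_{l/W}=1$, so $\hat D=\PP(N_{l/W})$ is a $\PP^2$–bundle over $l$ and $K_{\widehat W}=-3\hat H+2\hat D$. The intersection numbers I would record are $\hat H^4=5$, $\hat H\cdot\hat D^3=H\cdot l=1$, $\hat D^4=1$ and $\hat H^3\cdot\hat D=\hat H^2\cdot\hat D^2=0$ (standard blowup formulae for a curve of codimension three); in particular $(\hat H-\hat D)^4=2$. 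The hypothesis $l\not\subset R$ enters through Proposition \ref{Proposition-2.2.}: it ensures that $l$ is disjoint from $\Xi$ and, crucially, controls the family of lines of $W$ meeting $l$.

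Because $\rho(\widehat W)=2$, the cone $\NE(\widehat W)$ has exactly two extremal rays. One is spanned by a fibre $f$ of $\hat D\to l$, with $\hat H\cdot f=0$ and $\hat D\cdot f=-1$, contracted by $\hat\rho$. For the other I would use the class $\ell$ of the proper transform of a line $m\subset W$ meeting $l$ in one point; by Proposition \ref{Proposition-2.2.}(v) such lines sweep out a two–parameter family, and $\hat H\cdot\ell=1$, $\hat D\cdot\ell=1$, $K_{\widehat W}\cdot\ell=-1$. Thus $R_2=\mathbb R_{\ge0}\ell$ is a second, $K$–negative extremal ray, $f$ and $\ell$ span $N_1(\widehat W)$, and the contraction $\hat\varphi\colon\widehat W\to Y$ of $R_2$ contracts the prime divisor $\hat E$ swept out by these lines onto the surface $S$ parametrising them. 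By the Mori–theoretic classification of divisorial contractions of smooth fourfolds, a contraction of a divisor onto a surface with one–dimensional fibres and length one is the blowup of a smooth surface $S\subset Y$ in a smooth fourfold $Y$. Writing $\hat E=K_{\widehat W}-\hat\varphi^*K_Y$ and using $-K_{\widehat W}=3\hat H-2\hat D$ yields $\hat\varphi^*(-K_Y)=4(\hat H-\hat D)$ and $\hat E=\hat H-2\hat D$; hence $Y$ is Fano of index $4$ with $(\hat\varphi^*\mathcal L)^4=(\hat H-\hat D)^4=2$, so by classification $Y=Q^4$ is a smooth quadric fourfold and $\hat\varphi^*\mathcal L=\hat H-\hat D$. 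As $|\hat H-\hat D|$ is the proper transform of the hyperplane sections of $W$ through $l$, the induced map is precisely the projection $\hat\phi$ from $l$; this proves (i), (ii), and the linear equivalences in (v) follow formally from the identities $\hat\varphi^*\mathcal L=\hat H-\hat D$ and $\hat E=\hat H-2\hat D$.

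It remains to identify the two exceptional loci. From $\hat\rho_*\hat E=H$ one sees that $M_0:=\hat\rho(\hat E)$ is a hyperplane section of $W$, double along $l$ and swept out by the lines meeting $l$, which is (iv). A Segre–class computation gives $\deg S=-\hat E^2\cdot(\hat H-\hat D)^2=3$ and $\dim\langle S\rangle=4$, and the ruling of $S$ by the images of the contracted lines exhibits it as a rational normal cubic scroll. For (iii) I would compute $(\hat H-\hat D)^3\cdot\hat D=2$, so $\hat\varphi(\hat D)$ is a hyperplane section of $Q^4$; since $S\subset\hat\varphi(\hat D)$ forces $\langle\hat\varphi(\hat D)\rangle=\langle S\rangle$, while $\hat\varphi$ contracts the $\PP^1$'s of $\hat E\cap\hat D$ to a vertex, $\hat\varphi(\hat D)=Q^4\cap\langle S\rangle$ is a quadric cone.

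For the converse I would run the same game backwards. Starting from a pair $(Q^4,S)$ as in the statement, blow up $S$ to obtain $\widehat W$ with $\rho(\widehat W)=2$; the second extremal ray is supported by the class $2\hat\varphi^*\mathcal L-\hat E=\hat\rho^*H$, and the hypothesis that $Q^4\cap\langle S\rangle$ is a quadric cone is exactly what forces its contraction $\hat\rho\colon\widehat W\to W'$ to send a $\PP^2$–bundle onto a line $l'$ rather than to a point. The equivalences (v) then give $-K_{W'}=3H'$, $\deg W'=5$ and $\rho(W')=1$, whence $W'\cong W$ by the uniqueness of the del Pezzo quintic fourfold, reconstructing \eqref{eq:diagram-Q4}. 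The principal obstacle throughout is the second step: showing that $R_2$ is genuinely extremal and that its contraction $\hat\varphi$ is a smooth blowup, rather than a small or otherwise degenerate contraction. This is precisely where $l\not\subset R$ does the work, via the explicit description in Proposition \ref{Proposition-2.2.} of the lines of $W$ meeting $l$.
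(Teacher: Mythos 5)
A preliminary remark on the comparison itself: the paper contains no proof of Proposition \ref{prop:PZ4.11} --- it is recalled from \cite[Prop.~4.11]{Prokhorov-Zaidenberg-2014}, the construction going back to \cite[Sect.~10]{Fujita-620281} and \cite{Prokhorov1994}. So your attempt can only be measured against the standard argument, which is the same two-ray game the paper itself runs in Section \ref{sec:proof-of-theorem-1} for Theorem \ref{theorem-1}, and your skeleton is exactly that argument. Your numerical work is correct: writing $\hat H=\hat\rho^*H$, one has $\deg N_{l/W}=1$, $K_{\widehat W}=-3\hat H+2\hat D$, $\hat H^3\cdot\hat D=\hat H^2\cdot\hat D^2=0$, $\hat H\cdot\hat D^3=1$, $\hat D^4=1$, hence $(\hat H-\hat D)^4=2$, $-\hat E^2\cdot(\hat H-\hat D)^2=3$, $(\hat H-\hat D)^3\cdot\hat D=2$. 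The genuine gap is at the heart of the proof: you never establish that the second contraction is divisorial with only one-dimensional fibres onto a surface, which is precisely the hypothesis needed to invoke \cite[Thm.~2.3]{Ando1985} (or Andreatta--Wi\'sniewski), and precisely where $l\not\subset R$ enters; your closing sentence concedes this is ``the principal obstacle'' where ``$l\not\subset R$ does the work'', but the work is never done. What is missing is short but essential: (1) the scheme-theoretic base locus on $W$ of the hyperplane sections through $l$ is exactly $l$, so $|\hat H-\hat D|$ is base point free; hence $\hat H-\hat D$ is nef, and since $(\hat H-\hat D)\cdot\ell=0$ while $(\hat H-\hat D)\cdot f=1$, the class $\ell$ does span the second extremal ray and $\hat\varphi$ may be taken to be the Stein factorization of $\Phi_{|\hat H-\hat D|}$; (2) every positive-dimensional fibre of $\Phi_{|\hat H-\hat D|}$ lies in $\overline{(P\cap W)\setminus l}$ for some plane $P\supset l$ in $\PP^7$; since $W$ is an intersection of quadrics, $P\cap W$ is cut out in $P\cong\PP^2$ by conics containing $l$, i.e.\ by line pairs $l+l'$, so $P\cap W$ is either all of $P$ --- excluded exactly because no plane of $W$ contains $l$ --- or the union of $l$ with a point or a single line. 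Only then do all fibres have dimension $\le1$, Ando applies, and the blowup structure you use afterwards is available; this is the exact analogue of Lemma \ref{lem:replacement of 2.6 and 2.7} in the paper's proof of Theorem \ref{theorem-1}. (Note also that ``$\hat E=K_{\widehat W}-\hat\varphi^*K_Y$ yields $\hat\varphi^*(-K_Y)=4(\hat H-\hat D)$'' is one equation in two unknowns: you need in addition $\hat\varphi^*K_Y\cdot\ell=0$, effectivity of $\hat E$ to rule out index $\le3$, and $(\hat H-\hat D)^4=2\neq1$ to rule out $Y\cong\PP^4$.)

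Two further points. First, your claim that the equivalences in (v) ``follow formally'' from $\hat\varphi^*\mathcal L=\hat H-\hat D$ and $\hat E=\hat H-2\hat D$ is false as stated: these identities \emph{contradict} (v) as printed, which asserts $\hat\varphi^*\mathcal L\sim\hat D-\hat E$ and $\hat\rho^*H\sim2\hat\varphi^*\mathcal L-\hat D$. In fact your identities are the correct ones --- they are forced by $K_{\widehat W}=\hat\rho^*K_W+2\hat D=\hat\varphi^*K_{Q^4}+\hat E$ --- and items (ii) and (v) as reproduced in this paper are garbled in transcription from \cite{Prokhorov-Zaidenberg-2014}: the printed (v) would force $\hat E\equiv-3\hat D$, and the printed (ii) has $\hat\rho$ defined by a linear system that does not contract the fibres of $\hat D\to l$. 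The correct chain is $\hat\varphi^*\mathcal L\sim\hat\rho^*H-\hat D$ and $\hat E\sim\hat\rho^*H-2\hat D\sim\hat\varphi^*\mathcal L-\hat D\sim2\hat\varphi^*\mathcal L-\hat\rho^*H$; a careful write-up must flag this discrepancy rather than claim agreement with the printed text. Second, your justification of (iii) is wrong: $\hat E\cap\hat D$ is \emph{not} contracted by $\hat\varphi$ --- every fibre of $\hat\varphi|_{\hat E}$ meets $\hat D$ in exactly one point, so $\hat E\cap\hat D$ maps onto $S$; the vertex of the quadric cone $Q^4\cap\langle S\rangle$ is the image of a distinguished section of the $\PP^2$-bundle $\hat D\to l$ (the section on which $(\hat H-\hat D)|_{\hat D}$ has degree zero), not of curves in $\hat E\cap\hat D$.
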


To construct surfaces $F\subset W$ as in Proposition \ref{lemma-existence-surfaces} {\rm a)} we use the following Lemmas 
\ref{lem:cubic scrolls-1}-\ref{lem:cubic scrolls-2}.

\begin{lemma}\label{lem:cubic scrolls-1}
Consider a quadric cone threefold $Q^3\subset\PP^4$ with a zero-dimensional vertex $P$, a smooth hyperplane section 
$Q^2=Q^3\cap \mathcal{H}$, where $Q^2\cong\PP^1\times\PP^1$, and a smooth
conic $C\subset Q^2$. Consider also a plane $\mathcal{T}\subset Q^3$, $\mathcal{T}\cong\PP^2$, and a general quadric $Q^{\bullet 3}
\subset\PP^4$ which contains $\mathcal{T}\cup C$. Then $Q^3\cap Q^{\bullet 3}=\mathcal{T}\cup S$, where 
$S\cong\FF_1$ is a smooth rational normal cubic scroll in $\PP^4$ passing through $P$ and $C$.
\end{lemma}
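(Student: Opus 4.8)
The plan is to choose adapted coordinates, present the residual surface determinantally, and then recognize it as a hyperplane section of a Segre threefold.

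First I would normalize coordinates $x_0,\dots,x_4$ on $\PP^4$ so that $P=[0:0:0:0:1]$ and $Q^3=\{x_0x_3-x_1x_2=0\}$ is the cone with vertex $P$ over the Segre quadric $Q^2=Q^3\cap\{x_4=0\}\cong\PP^1\times\PP^1$. Up to the action of $\Aut(Q^3)$ on the family of planes I may take $\mathcal{T}=\{x_1=x_3=0\}$, the cone over the ruling $\mathcal T\cap\{x_4=0\}\subset Q^2$, which automatically contains $P$; and I parametrize the smooth $(1,1)$-conic as $C\colon[s:t]\mapsto[st:s^2:t^2:st:0]$. A quadric vanishes on $\mathcal T$ exactly when its equation lies in $(x_1,x_3)$, i.e. $Q^{\bullet3}=\{x_1L_1+x_3L_2=0\}$ for linear forms $L_1,L_2$; imposing $C\subset Q^{\bullet3}$ cuts out a linear subfamily of admissible pairs $(L_1,L_2)$, whose general member gives an irreducible quadric $Q^{\bullet3}$ that is smooth near $P$.

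The key step is to write both defining quadrics inside the ideal of $\mathcal T$: setting $M=\begin{pmatrix}-x_2&x_0\\ L_1&L_2\end{pmatrix}$ one has $M\binom{x_1}{x_3}=\binom{x_0x_3-x_1x_2}{x_1L_1+x_3L_2}$. On $Y=Q^3\cap Q^{\bullet3}$ this column vanishes, so at every point of $Y$ off $\mathcal T$ (where $(x_1,x_3)\neq(0,0)$) the matrix $M$ is singular; hence the residual surface $S=\overline{Y\setminus\mathcal T}$ lies in $V(\det M)$. Now $x_0x_3-x_1x_2$, $x_1L_1+x_3L_2$ and $\det M=-(x_0L_1+x_2L_2)$ are precisely the three $2\times2$ minors of
\[
N=\begin{pmatrix}x_0&x_1&-L_2\\ x_2&x_3&L_1\end{pmatrix},
\]
so $S\subseteq\{\rk N\le1\}\subseteq Y$, and since the rank-$\le1$ locus of a $2\times3$ matrix of linear forms is a surface of degree $3$ while $\deg\mathcal T+3=4=\deg Y$, I get $S=\{\rk N\le1\}$ and the decomposition $Q^3\cap Q^{\bullet3}=\mathcal T\cup S$ (each component with multiplicity one, $Y$ being reduced for general $Q^{\bullet3}$).

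It remains to identify $S$ and to place $P$ and $C$ on it. The six entries of $N$ are the linear forms $x_0,x_1,x_2,x_3,L_1,L_2$, and for a general admissible $Q^{\bullet3}$ (equivalently, as soon as $L_1$ or $L_2$ genuinely involves $x_4$) they span all linear forms on $\PP^4$; then $N$ is a linear isomorphism of $\PP^4$ onto a hyperplane $\Lambda\subset\PP^5=\PP(\mathrm{Mat}_{2\times3})$ carrying $S$ to $\Lambda\cap\Sigma$, where $\Sigma=\PP^1\times\PP^2$ is the Segre threefold. Thus $S$ is a hyperplane section of $\Sigma$, and for general $Q^{\bullet3}$ the hyperplane $\Lambda$ is transverse to $\Sigma$, making $S$ a smooth rational normal cubic scroll $S\cong\FF_1$. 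This transversality is the one genuinely delicate point: a tangent $\Lambda$ produces the singular cubic $S(0,3)$, a cone over a twisted cubic. I would dispose of it by noting that the singular locus of the determinantal surface is $\{N=0\}$, which is empty once $L_1(P)$ or $L_2(P)\neq0$, or equivalently by exhibiting a single admissible $Q^{\bullet3}$, smooth at $P$, and solving for one coordinate from $Q^{\bullet3}=0$ near $P$ to check directly that $S$ is smooth at the vertex $P$ (the only candidate singular point, since $Q^3$ is smooth elsewhere and Bertini applies off the base locus). Finally, $N(P)$ has its first two columns zero, hence rank $\le1$, so $P\in S$; and substituting the parametrization of $C$ into the three minors makes them vanish identically, while $C$, being an irreducible conic, is not contained in the line $\mathcal T\cap\{x_4=0\}$ and hence not in $\mathcal T$, so $C\subset S$. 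This gives $Q^3\cap Q^{\bullet3}=\mathcal T\cup S$ with $S\cong\FF_1$ a smooth cubic scroll through $P$ and $C$, as asserted.
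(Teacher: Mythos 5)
Your determinantal framework is sound in outline and genuinely different from the paper's proof (which does a dimension count in $|\mathcal{O}_{Q^3}(2)|$ and then invokes the classification of irreducible nondegenerate cubic surfaces in $\PP^4$, killing the cone case by projecting the conic $C$ from the would-be vertex). Several of your steps are fine: the matrix $N$, the set-theoretic decomposition $Q^3\cap Q^{\bullet 3}=\mathcal{T}\cup\{\rk N\le 1\}$, the degree count, and the verifications $P\in S$, $C\subset S$. But the step you yourself call ``the one genuinely delicate point'' --- smoothness of $S$ --- is disposed of with a false criterion, and this is a genuine gap. The singular locus of the rank-$\le 1$ locus of a $2\times 3$ matrix of linear forms is \emph{not} $\{N=0\}$. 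Writing a hyperplane as $\Lambda=\{\sum c_{ij}y_{ij}=0\}$ with $c\in\mathrm{Mat}_{2\times 3}$, one has $\Lambda\cap\Sigma=\{u\otimes v:\ u^{T}cv=0\}$, which is the smooth scroll $\FF_1$ if and only if $\rk c=2$; if $\rk c=1$, say $c=ab^{T}$, it degenerates to the \emph{reducible} surface $(\{u^{T}a=0\}\times\PP^2)\cup(\PP^1\times\{b^{T}v=0\})$, a plane plus a quadric, singular along a line. (So your identification of the degenerate case is also off: the cone $S(0,3)$ never occurs as a hyperplane section of $\Sigma$; it arises only when the six entries of $N$ fail to span, i.e.\ when your map to $\PP^5$ is not an embedding.) In the reducible case $\{N=0\}$ is still empty and $L_i(P)\neq 0$ can still hold; indeed your own admissible family contains such members: $L_1=L_2=x_4$ gives $Q^{\bullet 3}=(x_1+x_3)x_4\supset\mathcal{T}\cup C$, spanning entries, $L_1(P)=L_2(P)=1\neq 0$, yet $\{\rk N\le1\}=Q^2\cup\{x_0+x_2=x_1+x_3=0\}$ is singular along a conic. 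Your Bertini fallback is also insufficient: Bertini controls $Y=Q^3\cap Q^{\bullet 3}$ only off the base locus $\mathcal{T}\cup C$, so the candidate singular locus of $S$ is all of $S\cap(\mathcal{T}\cup C)\cup\{P\}$, a curve, not just the point $P$ (and $Y$ is reducible anyway, so smoothness of $Y$ is not the right thing to test).

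The gap is repairable inside your framework. The six entries of $N$ satisfy exactly one linear relation; with $L_1=a_0x_0+a_2x_2+a_3x_3+a_4x_4$ and $L_2=b_0x_0+b_1x_1+b_3x_3+b_4x_4$ (your admissibility conditions force $a_1=b_2=0$), its coefficient matrix is
\[
c=\begin{pmatrix} a_4b_0-b_4a_0 & a_4b_1 & a_4\\ -b_4a_2 & a_4b_3-b_4a_3 & b_4\end{pmatrix},
\]
and $S\cong\Lambda\cap\Sigma$ is a smooth cubic scroll precisely when $\rk c=2$. Since this is an open condition on the irreducible (linear) family of admissible pairs $(L_1,L_2)$, it suffices to exhibit one pair where it holds, e.g.\ $L_1=x_4$, $L_2=x_0-x_3$, for which $c=\left(\begin{smallmatrix}1&0&1\\ 0&-1&0\end{smallmatrix}\right)$ has rank $2$. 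With that replacement for your smoothness step, the rest of your argument (irreducibility and degree $3$ of the rank locus, reducedness of the complete intersection $Y$, hence $Y=\mathcal{T}\cup S$, and $P,C\subset S$) closes correctly.
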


\begin{proof}
The exact sequence
\begin{equation*}
0\longrightarrow \mathcal{O}_{Q^3}(1)\longrightarrow \mathcal{O}_{Q^3}(2)
\longrightarrow \mathcal{O}_{Q^2}(2)\longrightarrow 0\,
\end{equation*}
yields the exact cohomology sequence
\begin{equation}\label{eq-cohom}
0\to H^0(\mathcal{O}_{Q^3}(1))\longrightarrow H^0(\mathcal{O}_{Q^3}(2))\stackrel{\psi}{\longrightarrow} H^0(\mathcal{O}_{Q^2}(2))\to 0\,.
\end{equation}
Let $l_1$ and $l_2$ be general horizontal and vertical generators of the quadric $Q^2$, and let $s\in H^0(\mathcal{O}_{Q^2}(2))$ be a 
section vanishing along the $(2,2)$-divisor $C+l_1+l_2$. By virtue of \eqref{eq-cohom} the affine subspace $\psi^{-1}(s)\subset H^0(\mathcal{O}_{Q^3}(2))$ 
has dimension $5$. It projects into a 5-dimensional family of divisors 
$D\in |\mathcal{O}_{Q^3}(2)|$ such that $D\cap Q^2= C+l_1+l_2 $. 
The plane $\mathcal{T}\subset Q^3$ is spanned by $l_1$ and $P$. It defines a 
2-dimensional subfamily $\mathcal{Q}$ of divisors $D$ containing 
$\mathcal{T}$ and such that $D\cap Q^2=C+l_1+l_2$.

Write $D=\mathcal{T}\cup S$, where $S$ is the residual cubic surface. Then $S\cap Q^2=C+l_2$. 
Suppose that $S$ is reducible: $S=\mathcal{T}_2\cup S'$, 
where $\mathcal{T}_2\cap Q^2=l_2$ and $S'\cap Q^2=C$. Then $D=\mathcal T_1\cup \mathcal T_2\cup S'$, where $\mathcal T_1=\mathcal T$, 
$\mathcal{T}_2={\rm span} (l_2,P)$ is a plane, and $S'$ is a hyperplane section of $Q^3$. 
Here $\mathcal T_1 \cup \mathcal T_2$ 
is uniquely determined by $l_1\cup l_2$, and $S'$ runs over a 1-parameter family. 

Since $\dim\mathcal{Q}=2$, one can conclude that a general divisor $D\in \mathcal Q$ has the form
$D=\mathcal{T}\cup S$, where $S\subset \PP^4$ is an irreducible cubic surface.

The cubic surface $S$ is linearly nondegenerate, because a hyperplane section of $Q^3$ is a quadric surface. 
Thus, $S$ is a linearly nondegenerate surface of minimal degree $3$ in $\PP^4$. Such a surface is either a cone over 
a twisted cubic $\Gamma\subset\PP^3$, or a rational normal scroll $S=S_{2,1}\cong\FF_1$ 
(see \cite[Ch.\ 4, Prop.\ on p.\ 525]{Griffiths-Harris-1978}).

If $S$ were a cone over $\Gamma\subset\PP^3$ with vertex $P'$, then the twisted cubic $\Gamma$ would 
be dominated by the conic $C$ under the projection from $P'$, which is impossible. Thus $F\cong\FF_1$ is smooth.

Finally, $P\in S$ since otherwise $S$ would be a Cartier divisor on $Q^3$ linearly proportional to a hyperplane section.
\end{proof}

\begin{lemma}\label{lem:cubic scrolls-2}
Let $Q^4\subset\PP^5$ be a smooth quadric. There exist two smooth cubic scrolls $S$ and 
$S'$ in $Q^4\subset \PP^5$ such that
\begin{itemize}
\item
$S\cong\FF_1\cong S'$;
\item
$S$ and $S'$ span hyperplanes $L$ and $L'$ in $\PP^5$, respectively, where $L\neq L'$;
\item 
$L\cap Q^4=Q^3$ and $L'\cap Q^4={Q'}^3$
are quadric cones with zero-dimensional vertices $P$ and $P'$, respectively, where $P\neq P'$;
\item 
the scheme theoretical intersection $C=S\cdot S'$ is a smooth conic.
\end{itemize}
\end{lemma}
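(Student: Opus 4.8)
The plan is to produce $S$ and $S'$ by two parallel applications of Lemma \ref{lem:cubic scrolls-1}, arranged so that both scrolls pass through one and the same conic lying in the common $\PP^3$ cut out by their linear spans.

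First I would fix two general points $P,P'\in Q^4$ and let $L=T_PQ^4$, $L'=T_{P'}Q^4$ be the embedded tangent hyperplanes. Since the polarity of a smooth quadric is an isomorphism onto $(\PP^5)^\vee$, the condition $P\neq P'$ forces $L\neq L'$, and each tangent section $Q^3:=Q^4\cap L$ and ${Q'}^3:=Q^4\cap L'$ is a cone over a smooth quadric surface, with zero-dimensional vertex $P$, resp.\ $P'$. Put $\Pi:=L\cap L'\cong\PP^3$. For general $P,P'$ one has $b(P,P')\neq 0$ for the polar form $b$, so that $P,P'\notin\Pi$, and the quadric surface $G:=Q^4\cap\Pi$ is smooth, $G\cong\PP^1\times\PP^1$. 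Note that $G=Q^3\cap\Pi$ is a hyperplane section of $Q^3$ inside $L$, and likewise $G={Q'}^3\cap\Pi$ is a hyperplane section of ${Q'}^3$ inside $L'$.

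Next I would choose a smooth conic $C\subset G$, namely a curve of bidegree $(1,1)$, which automatically spans a plane in $\Pi$. Now apply Lemma \ref{lem:cubic scrolls-1} inside $L\cong\PP^4$ to the cone $Q^3$ with vertex $P$, the hyperplane section $G$ (playing the role of $Q^2$), the conic $C$, and a plane $\mathcal{T}=\langle P,l_1\rangle\subset Q^3$ with $l_1$ a generator of $G$ in the first ruling. This yields a smooth cubic scroll $S\cong\FF_1$ with $\langle S\rangle=L$, passing through $P$ and $C$, and with $S\cap G=C+l_2$ for a generator $l_2$ in the second ruling. Symmetrically, working inside $L'$ I obtain $S'\cong\FF_1$ with $\langle S'\rangle=L'$, through $P'$ and $C$, with $S'\cap G=C+l_2'$. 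Since the two constructions are independent, I may insist that $l_2$ and $l_2'$ lie in the same ruling of $G$ and are distinct, hence disjoint.

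It remains to compute $S\cdot S'$. As $S\subset L$ and $S'\subset L'$, set-theoretically $S\cap S'=(S\cap\Pi)\cap(S'\cap\Pi)=(C\cup l_2)\cap(C\cup l_2')$ inside $G$; since $l_2,l_2'$ are distinct members of one ruling they do not meet, whence $S\cap S'=C$. \emph{The one delicate point} is to see that this intersection is reduced. At a general point $x\in C$ the sections $S\cap\Pi$ and $S'\cap\Pi$ are reduced (locally equal to $C$), so $T_xS\cap\Pi=T_xC=T_xS'\cap\Pi$; as $T_xS\subset L$ and $T_xS'\subset L'$ give $T_xS\cap T_xS'\subset\Pi$, this forces $T_xS\cap T_xS'=T_xC$. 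Thus $S$ and $S'$ meet transversally along $C$, so $S\cdot S'=C$ is the required smooth conic. I expect this reducedness along $C$ to be the main obstacle; the rest reduces to the standard geometry of tangent hyperplane sections of a smooth quadric combined with Lemma \ref{lem:cubic scrolls-1}.
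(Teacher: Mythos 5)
Your proof is essentially the paper's own: the paper takes two degenerate members $Q^3=Q\cap T_{P}Q$ and ${Q'}^3=Q\cap T_{P'}Q$ of a general pencil of hyperplane sections of $Q^4$, observes that their common base locus $Q^2=Q^3\cap {Q'}^3\cong\PP^1\times\PP^1$ is smooth, and applies Lemma \ref{lem:cubic scrolls-1} to both cones with a common conic --- exactly your configuration, with your $G=Q^4\cap\Pi$ playing the role of $Q^2$. You are in fact more careful than the paper (which settles the last bullet with ``the assertions follow''); the only refinement needed is to run your tangent-space argument at \emph{every} point of $C$ rather than a general one, which works because $l_2\cap l_2'=\emptyset$ ensures that at each $x\in C$ at least one of the divisors $S\cap\Pi$, $S'\cap\Pi$ is smooth at $x$, so $T_xS\cap T_xS'=T_xC$ everywhere and the intersection scheme is reduced with no embedded points.
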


\begin{proof}
A general pencil $(Q^3_{\lambda})$ of hyperplane sections of $Q^4$ contains exactly 6 degenerate members. 
Consider two of them, say, $Q^3=Q\cap T_{P}Q$ and ${Q'}^3=Q\cap T_{P'}Q$, where $P,P'\in Q$. Then $Q^3$ 
and ${Q'}^3$ are quadric cones with zero-dimensional vertices $P$ and $P'$, respectively. The base locus 
of the pencil $(Q^3_{\lambda})$ is the smooth quadric $Q^2=Q^3\cap {Q'}^3\cong\PP^1\times\PP^1$.
Applying Lemma \ref{lem:cubic scrolls-1} to $Q^3$ and ${Q'}^3$, the assertions follow.
\end{proof}

Using Lemma \ref{lem:cubic scrolls-2} and Proposition \ref{prop:PZ4.11} we proceed now with construction of surfaces $F$ 
as in Proposition \ref{lemma-existence-surfaces} a).

\begin{construction}\label{sit} {\rm Consider the smooth cubic scrolls $S$ and $S'$ in $\PP^5$ as in Lemma 
\ref{lem:cubic scrolls-2}. The embedding $\FF_1\stackrel{\cong}{\longrightarrow} S'{\hookrightarrow}\PP^4$ 
is given by the linear system $|\sigma+2f|$ on $\FF_1$, where $\sigma$ is the exceptional section of 
$\FF_1\to\PP^1$ and $f$ is a fiber. On $S'$ we have $C=S\cdot S'\sim\sigma+f$, where the images of 
$\sigma$ and $f$ on $S'$ are denoted by the same letters.

In what follows we employ the notation of Proposition \ref{prop:PZ4.11}. Let $\hat S'$ be the proper transform of 
$S'$ in $\widehat W$ (see diagram \eqref{eq:diagram-Q4}). Then, clearly, $\hat S'\cong S'\cong \FF_1$. 
By Proposition \ref{prop:PZ4.11}(v), the morphism $\hat\rho: \widehat W\to W\subset\PP^7$ is defined by 
the linear system $|\hat\rho^*H|=|2\hat\varphi^*\mathcal{L}-\hat D|$, where $\mathcal{L}$ is a hyperplane section 
of $Q^4\subset\PP^5$ and $\hat D=\hat\varphi^*(S)$ is the exceptional divisor of $\hat\varphi$. Identifying $S'$ 
with $\tilde S'$ one can write
\begin{equation}\label{eq:morphism} 
(2\varphi^*\mathcal{L}-\hat D)|_{\hat S'}
=2\mathcal{L}|_{S'}-S|_{S'}
\sim 2(\sigma+2f)-C
\sim \sigma+3f\,.
\end{equation}
We let $F=\hat\rho (\hat S')\subset W$. Since $\hat S'\not\subset \hat D$, the map 
$\hat\rho|_{\hat S'}: \hat S' \to F$ is a birational morphism, and the surface $F$ is a quintic scroll.}
\end{construction}

\begin{remark}\label{rem:l}
Since $S'\cap \langle S\rangle =C+f_0$, where $f_0$ is a fiber of $S'$, we have $\hat S'\cap \hat D\supset \hat f_0$. 
Therefore, $\hat \rho(\hat f_0)=l\subset F$ (because $l=\hat\rho(\hat D)$ and $F=\hat\rho(\hat S')$).
Moreover, $l$ is a ruling of $F$. 
\end{remark}

\begin{lemma}\label{lem:quintic scroll} The morphism $\hat\rho|_{\hat S'}: \hat S'\to F$
is an isomorphism onto a smooth rational normal quintic scroll $F\supset l$ contained in a hyperplane in
$\PP^7$.
\end{lemma}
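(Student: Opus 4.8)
The plan is to establish the three assertions of Lemma~\ref{lem:quintic scroll} in turn: first that $\hat\rho|_{\hat S'}$ is an isomorphism (not merely a birational morphism, as already noted in Construction~\ref{sit}), then that the image $F$ is a smooth rational normal quintic scroll, and finally that $F$ spans only a hyperplane in $\PP^7$. The birational morphism $\hat\rho|_{\hat S'}\colon \hat S'\to F$ is already in hand, so to upgrade it to an isomorphism I would check that it contracts no curve. The only curves on $\hat S'$ that $\hat\rho$ could contract are those lying over the line $l=\hat\rho(\hat D)$, i.e.\ curves in $\hat S'\cap \hat D$. By Remark~\ref{rem:l} this intersection contains the fiber $\hat f_0$, and $\hat\rho(\hat f_0)=l$ is a ruling of $F$, hence $\hat f_0$ maps isomorphically rather than being contracted. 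So the plan is to verify that $\hat S'\cap \hat D$ consists of exactly this one fiber (scheme-theoretically), and that $\hat\rho$ is injective with injective differential there.

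First I would analyze the map via the linear system computed in \eqref{eq:morphism}. The restriction of $\hat\rho$ to $\hat S'\cong \FF_1$ is given by the system $|\sigma+3f|$. On $\FF_1$ one computes $(\sigma+3f)^2=5$, which matches $\deg F=5$ and confirms $F$ is a quintic; moreover $\sigma+3f$ is very ample on $\FF_1$ (it is $\sigma+2f$ plus a fiber, and $\sigma+2f$ already embeds $\FF_1$ as the cubic scroll). Very ampleness of $|\sigma+3f|$ immediately gives that $\hat\rho|_{\hat S'}$ is a closed embedding, so it is an isomorphism onto its smooth image $F\cong\FF_1$, and $F$ is a \emph{rational normal} scroll of degree $5$ since the embedding is by a complete linear system. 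The key point to pin down is that the morphism $\hat\rho$ restricted to $\hat S'$ really coincides with the map defined by the complete system $|\sigma+3f|$, which follows from \eqref{eq:morphism} together with the fact that $\hat\rho$ is defined by $|\hat\rho^*H|$ and that the restriction map on global sections is surjective onto $H^0(\FF_1,\sigma+3f)$.

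For the last claim, that $F$ lies in a hyperplane, I would count sections: $h^0(\FF_1,\sigma+3f)=7$, so the complete embedding lands in $\PP^6$, and therefore $F\subset W$ spans at most a $\PP^6\subset\PP^7$, i.e.\ a hyperplane. This matches the requirement $F\subset W\cap\PP^6$ of Theorem~\ref{theorem-1}\,\ref{case-g=10} and the computation $\dim\langle F\rangle=6$ in Remark~\ref{lem:unique-hyperplane}.

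The main obstacle I anticipate is the injectivity/separation step, namely ruling out that two distinct points of $\hat S'$ (or a tangent direction) collapse under $\hat\rho$, which amounts to confirming that $\hat S'$ is not contracted or folded onto the line $l$ beyond the single ruling $\hat f_0$. The cleanest way around this is precisely the very ampleness of $|\sigma+3f|$ on $\FF_1$: once one knows the morphism is given by this complete and very ample system, injectivity and immersion are automatic, and the only real verification left is that the linear system restricting $|\hat\rho^*H|$ to $\hat S'$ is the \emph{full} system $|\sigma+3f|$ rather than a proper sublinear system — which could in principle fail to separate points. I would secure this by the surjectivity of restriction of quadrics through $S$ to $\hat S'$, using that $\dim\langle F\rangle=6$ forces the restricted system to be complete of projective dimension $6$, matching $h^0(\sigma+3f)-1=6$.
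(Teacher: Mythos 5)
Your overall framework agrees with the paper's: both reduce the lemma to showing that $\hat\rho|_{\hat S'}$ is given by the \emph{complete} linear system $|\sigma+3f|$ on $\hat S'\cong\FF_1$, after which very ampleness, $(\sigma+3f)^2=5$ and $h^0(\sigma+3f)=7$ yield the isomorphism, the degree, the rational normality, and the hyperplane $\PP^6\subset\PP^7$. But you never actually prove the one point you yourself single out as ``the only real verification left,'' namely the completeness of the restricted system; your argument for it is circular. You invoke $\dim\langle F\rangle=6$ from Remark \ref{lem:unique-hyperplane} to conclude that the restricted system has projective dimension $6$ and hence is complete; but $\dim\langle F\rangle=6$ is \emph{equivalent} to that completeness, and Remark \ref{lem:unique-hyperplane} applies to surfaces as in Lemma \ref{lem:enumerative}, i.e.\ to surfaces already assumed to be smooth rational normal quintic scrolls in $W\cap\PP^6$ --- which is exactly what Lemma \ref{lem:quintic scroll} is supposed to establish for the constructed surface $F=\hat\rho(\hat S')$. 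At this stage $F$ is only known to be the birational image of $\FF_1$ under some subsystem of $|\sigma+3f|$; a priori it could span only a $\PP^5$ and even be non-normal. Nor is completeness automatic on abstract grounds: the secant variety of any surface in $\PP^6$ has dimension at most $5$, so the rational normal quintic scroll projects isomorphically from a general point of $\PP^6$ onto a \emph{smooth} quintic scroll spanning only $\PP^5$; hence some input from the geometry of $W$ is indispensable. Your fallback, ``surjectivity of restriction of quadrics through $S$ to $\hat S'$,'' is again deduced from $\dim\langle F\rangle=6$ rather than proven independently (a non-circular version would require showing that the quadrics of $\PP^5$ containing $S\cup S'$ form exactly a pencil, a computation you do not carry out).

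The paper's proof supplies precisely this missing input, by contradiction. If $\langle F\rangle\cong\PP^5$, then $F=W\cap\PP^5$ is a complete intersection in the smooth fourfold $W$, hence Cohen--Macaulay with $\omega_F\sim(K_W+2H)|_F\sim -H|_F$; adjunction then makes a general hyperplane section of $F$ a rational curve of arithmetic genus $1$, so $F$ is singular along a curve, which must be the ruling $l$ since $\hat\rho$ is an isomorphism off $l$ (Claim \ref{claim1}); finally, the conductor formula for the normalization $\nu\colon\FF_1\to F$ gives $B\sim\nu^*\omega_F-K_{\FF_1}\sim\sigma$ (Claim \ref{claim2}), while $\Supp B$ must be the fiber $f$ lying over $l$, contradicting $B\cdot f=\sigma\cdot f=1\neq 0$. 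Without an argument of this kind (or a genuine, independent proof that the quadrics through $S$ cut out the full system $|\sigma+3f|$ on $S'$), your proposal has a gap at its central step.
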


\begin{proof} It suffices to show that the morphism
$\hat\rho|_{\hat S'}: \hat S'\to \PP^6\subset\PP^7$ is given by the (very ample)
{\em complete} linear system $|\sigma+3f|$ on $\hat S'\cong\FF_1$ (cf.\ \eqref{eq:morphism}), or, in other words, that the induced morphism $\FF_1\to F$ is an isomorphism, see 
\cite[Ch.\ 4, p.\ 523]{Griffiths-Harris-1978} or \cite{Harris1992}.

Suppose to the
contrary that $\langle F\rangle\cong\PP^5$, that is, $F$ is cut out in $W$ by two hyperplanes. 
Then the quintic scroll $F$ 
cannot be normal. Indeed, for a general hyperplane section $\gamma$ on $F$ we have by adjunction 
$\omega_\gamma=(K_W+3H)|_\gamma\sim 0$. Hence the arithmetic genus of $\gamma$ equals 1. The genus of 
the proper transform of $\gamma$ on the normalization of $F$ equals 0, hence $\gamma$ is a rational 
curve with one double point. 
Such double points of hyperplane sections of $F$ fill in a line in $F$, and $F$ is singular along this line. In particular, $F$ is not normal.
This leads to the following claim. 

\begin{claim}\label{claim1}
If $\langle F\rangle\cong\PP^5$ then $\Sing F=l$ is a ruling of $F$. 
\end{claim}

\begin{proof}
We know that $l\subset F$ is a ruling, see Remark \ref{rem:l}.
Since $\hat W\to W$ is an isomorphism over $W\setminus l$, its restriction $\hat S'\to F$ is an isomorphism over $F\setminus l$.
Since $F$ is not normal, the claim follows. 
\end{proof}

On the other hand, we have

\begin{claim}\label{claim2}
Let as before $\langle F\rangle\cong\PP^5$, and let $\nu:\FF_1\to F$ be 
the normalization. Then on $\FF_1$ we have $K_{\FF_1}\sim \nu^*\omega_F-B$, 
where $B\sim \sigma$ is an effective divisor supported by the proper transform in $\FF_1$ 
of the non-normal locus of $F$. 
\end{claim}

\begin{proof} 
Under our assumption, $F$ is a complete intersection in a smooth variety $W$. 
Hence $F$ is Cohen-Macaulay, and so, the standard formula $K_{\FF_1}\sim \nu^*\omega_F-B$ 
holds with $B$ supported by the proper transform in $\FF_1$ of the non-normal locus of $F$. 
Using this formula and adjunction one gets on $\FF_1$:
\begin{eqnarray*} 
B\sim \nu^*\omega_F-K_{\FF_1}& \sim & (K_W+2H)|_F+(2\sigma+3f) \sim -H|_F+(2\sigma+3f)
\\ 
& \sim & -(\sigma+3f)+(2\sigma+3f) \quad\,\, \sim \sigma\,,
\end{eqnarray*}
as stated. 
\end{proof}

Due to Claim \ref{claim1} we have supp$(B)=f$, and so, $B\cdot f=0$. This yields a contradiction, 
since by Claim \ref{claim2}, $B\cdot f=\sigma\cdot f=1$ on $\FF_1$. 
\end{proof}

\begin{lemma}\label{lem:no plane conic}
None of the planes in $W$ meets the quintic scroll 
$F\subset W$ along a \textup(possibly, degenerate\textup) conic.
\end{lemma}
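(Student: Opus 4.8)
The plan is to reduce the statement to an analysis of conics lying on the scroll $F$, and then to eliminate each type using the intersection numbers $F\cdot\Xi=F\cdot\Pi=1$ from Lemma~\ref{lem:enumerative}\ref{case-g=10-a} together with the description of planes in $W$ given in Proposition~\ref{Proposition-2.2.}. By Lemma~\ref{lem:quintic scroll} I may identify $F\cong\FF_1$, embedded by the complete system $|\sigma+3f|$, so that an irreducible curve of class $a\sigma+bf$ has $H$-degree $2a+b$ and the effective cone is generated by $\sigma,f$. Hence, if a plane $\mathcal T\subset W$ meets $F$ along a conic $C$, then $C$ is an effective $1$-cycle on $F$ of degree $2$, so $C$ has class $\sigma$ or $2f$. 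In the first case $C$ is the rigid directrix conic $\sigma_F$ (the image of $\sigma$); in the second $C$ is a pair of rulings, or a double ruling.

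First I would dispose of the class $2f$. Two distinct rulings are fibres of $\FF_1$, hence disjoint lines in $\PP^7$; disjoint lines span a $\PP^3$ and cannot lie in one plane, so a reduced reducible conic on $F$ is never planar. For a double ruling $2l_0$ the set-theoretic intersection $\mathcal T\cap F$ is the single ruling $l_0$; since $c_1(\mathscr N_{F/W})=K_F+3H|_F=\sigma+6f$, one computes $\deg\mathscr N_{F/W}|_{l_0}=(\sigma+6f)\cdot f=1=\deg\mathscr N_{l_0/\mathcal T}$, so the excess contribution of $l_0$ to $F\cdot\mathcal T$ vanishes and $F\cdot\mathcal T=0$, contradicting $F\cdot\mathcal T=1$. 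This rules out the class $2f$.

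It remains to treat the directrix, so suppose $C=\sigma_F\subset\mathcal T$; as $\sigma_F$ is a smooth conic this forces $\mathcal T=\langle\sigma_F\rangle$, and by Proposition~\ref{Proposition-2.2.}(i) either $\mathcal T=\Xi$ or $\mathcal T=\Pi$ for some $\sigma_{3,1}$-plane. Here I would exploit the distinguished ruling $l\subset F$ of Remark~\ref{rem:l}: since $l\not\subset R$, Proposition~\ref{Proposition-2.2.}(v) gives $l\cap\Xi=\varnothing$, while $\sigma_F$ meets $l$ because $\sigma\cdot f=1$ on $\FF_1$. If $\mathcal T=\Xi$, then the point $\sigma_F\cap l$ lies in $\Xi$, contradicting $l\cap\Xi=\varnothing$. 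If $\mathcal T=\Pi$, then by Proposition~\ref{Proposition-2.2.}(ii) the intersection $\Pi\cap\Xi$ is a line $\tau$; the conic $\sigma_F\subset\Pi$ meets $\tau$ in two points (counted with multiplicity), and these lie in $F\cap\Xi$, so $F\cdot\Xi\ge2$, against $F\cdot\Xi=1$.

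The delicate point — which I expect to be the main obstacle — is this last inequality: to pass from the points of $\sigma_F\cap\tau$ to $F\cdot\Xi\ge2$ one must know that these points are isolated in $F\cap\Xi$, i.e.\ that $F\cap\Xi$ is finite. Equivalently, no ruling of $F$ lies in $\Xi$ (the directrix does not, by the case $\mathcal T=\Xi$ just treated). As any two rulings are disjoint while any two lines in $\Xi\cong\PP^2$ meet, at most one ruling could lie in $\Xi$; I would exclude even this by following the rulings of $F$ through the link of Proposition~\ref{prop:PZ4.11}, where they arise as images of the rulings of the cubic scroll $S'$ and thus meet $\Xi$ properly. Granting this, $F\cap\Xi$ is a single reduced point, the inequality $F\cdot\Xi\ge2$ holds in the $\Pi$-case, and all remaining cases yield contradictions, which proves the lemma.
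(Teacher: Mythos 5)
Your reduction to the two possible classes of conics on $F\cong\FF_1$ (the directrix $\sigma_F$ and the class $2f$), the exclusion of a pair of distinct rulings, and the case $\mathcal T=\Xi$ (via $l\cap\Xi=\emptyset$) are all correct, and that last subcase is handled more simply than in the paper. But there are two genuine gaps, and you have correctly identified the fatal one yourself: in the case $\mathcal T=\Pi$ you need $F\cap\Xi$ to be finite, and your proposed fix is not an argument. Nothing in Proposition \ref{prop:PZ4.11} controls the position of $\Xi$ relative to the rulings of $F$, so ``the rulings arise as images of the rulings of $S'$ and thus meet $\Xi$ properly'' is a hope, not a proof. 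Worse, the dangerous configuration is exactly the one your intersection count cannot see: if a ruling $f_1$ of $F$ lies in $\Xi$, then the point $f_1\cap\sigma_F$ lies on $\tau=\Pi\cap\Xi$, so the length-two scheme $\sigma_F\cap\tau$ may be supported on the curve component $f_1$ of $F\cap\Xi$ and contribute nothing extra; and you cannot discard curve components as harmless, since a clean ruling contributes $\deg\mathscr N_{F/W}|_{f_1}-\deg\mathscr N_{f_1/\Xi}=1-1=0$ and non-clean components can even contribute negatively. The paper avoids all of this by treating every plane through $\sigma_F$ at once via the link \eqref{eq:diagram-Q4}: the plane $\mathcal P\supset\sigma_F$ passes through $p=l\cap\sigma_F$, hence projects from $l$ to a line, which must be $\sigma_{S'}$; since $C=S\cdot S'$ is disjoint from $\sigma_{S'}$, the blowup $\hat\varphi$ is an isomorphism near $\sigma_{\hat S'}$, so the proper transform $\hat{\mathcal P}$, which contains $\sigma_{\hat S'}$, would have to map onto a surface and not a line --- a contradiction requiring no finiteness statement.

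The second gap is the double ruling $2l_0$. The excess formula you invoke, $\deg\mathscr N_{F/W}|_{l_0}-\deg\mathscr N_{l_0/\mathcal T}$, is valid for a clean (reduced) intersection; but the hypothesis that the scheme $F\cap\mathcal T$ is the double line forces $T_xF=T_x\mathcal T$ at every $x\in l_0$, which is precisely the non-clean situation. The correct computation, via the Segre class of the divisor $2l_0\subset\mathcal T$, gives $s(2l_0,\mathcal T)=2[l_0]-4[\pt]$, hence contribution $2\,c_1(\mathscr N_{F/W})\cdot f-4=-2$, not $0$. Since $-2\neq1$, your contradiction survives (provided one insists the conic is the whole intersection scheme), but the step as written is unjustified. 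The paper instead kills the double line by extending an automorphism of $\FF_1$ to $\PP^7$, producing a second plane tangent to $F$ along another ruling and then a hyperplane $\mathcal M\neq\langle F\rangle$ cutting the non-ample divisor $2f+2f'+f''$ on $F$ (alternatively: tangency along a ruling would make $F$ developable, contradicting smoothness).
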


\begin{proof} 
Recall that $R$ stands for the hyperplane section of $W$ swept out by 
the 1-parameter family of planes $(\Pi_t)$ in $W$. It is singular along the plane $\Xi$, 
see Proposition \ref{Proposition-2.2.}(iv). Since $l\subset F$ and $l\not\subset R$, we 
have $F\not\subset R$ and $l\cap\Xi=\emptyset$, see Proposition \ref{Proposition-2.2.}(v). 

Suppose to the contrary that $F$ meets a plane $\mathcal{P}\subset W$ along a conic, say, $\eta$. 

\begin{claim*}
The conic $\eta$ coincides with the exceptional section $\sigma_F$ 
of the scroll $F\cong\FF_1$.
\end{claim*}

\begin{proof} 
Suppose that the conic $\eta$ is degenerate. Since any two lines on 
$F$ are disjoint, $\eta\subset P$ cannot be a bouquet of two distinct lines. Hence $\eta$ is a double line $2f$.

For any line $f'\neq f$ in $F$ there exists an automorphism $\alpha\in\Aut F\cong\Aut\FF_1$ 
such that $\alpha(f)=f'$. Since the embedding 
\begin{equation*}
\FF_1\stackrel{\cong}{\longrightarrow} F\hookrightarrow \PP^6\subset\PP^7
\end{equation*} 
is given by an $(\Aut F)$-invariant linear system $|\sigma+ 3f|$, $\alpha$ can be extended to an 
automorphism $\bar\alpha\in\Aut\PP^7$, which leaves $\langle F\rangle\cong\PP^6$ invariant. Hence 
there exists a second plane $\mathcal{P}'=\bar\alpha(\mathcal{P})$, which meets $F$ along a double 
line $2f'$ (this plane $\mathcal{P}'$ does not need to be contained in $W$).
\footnote{Alternatively, the further proof can proceed as follows. The plane $\mathcal{P}'$ is tangent 
to $F$ along the ruling $f'$. Thus, the Gauss map of $F$ is degenerate, and so, $F$ is a developable 
surface. Such a surface, which is not a plane, is a cone or the tangential developable of a curve, 
see, e.g., \cite{Cayley1864} or more general results in \cite[(2.29)]{Griffiths-Harris-1978}, 
\cite[Cor.\ 5]{Zak1987}, or \cite[\S 2.3.3]{Fischer-Piontkowski2001}. Hence $F$ cannot 
be smooth, a contradiction. Our argument in the text is more elementary.}

The planes $\mathcal{P}$ and $\mathcal{P}'$ span a subspace $\mathcal{N}\subset\PP^7$ with $\dim\mathcal{N} \le 5$.
Thus, there exists a hyperplane $\mathcal{M}\supset \mathcal{N}$ in $\PP^7$ different from $\langle F\rangle$. We have 
$\mathcal{M}\cdot F=2f+2f'+f''$, where $f''\subset F$ is an extra line. However, this divisor $\mathcal{M}\cdot F$ on $F$ is not ample, 
which is a contradiction.

Thus, the conic $\eta=F\cap\mathcal{P}$ is smooth. Since the image $\sigma$ of the exceptional section $\sigma_{\hat S'}\subset\hat S'$ 
is a unique smooth conic in
the quintic scroll $F\cong\FF_1$, we obtain that $\eta=\sigma_F$. 
\end{proof}

The line $l\subset F$ meets the section $\sigma=\sigma_F$ in a point $p\in\sigma$. Hence it meets also the plane $\mathcal{P}$ in $p$. 
The projection $\hat\phi: W\dashrightarrow\PP^5$ with center $l$ sends $\sigma_F$ to the exceptional section $\sigma_{S'}\subset S'$, and 
$\mathcal{P}$ to a line on $S'\cong\FF_1$, which should coincide with $\sigma_{S'}$. Recall that by our construction 
$S\cap S'=C\sim\sigma_{S'}+f_{S'}$ is a smooth conic on $S'$. Since
$\sigma_{S'}\cap C=\emptyset$, the exceptional divisor $\hat E\subset\widehat W$ does not meet the section $\sigma_{\hat S'}$ of the scroll 
$\hat S'\subset\widehat W$. Thus $\hat\varphi:\widehat W\to Q^4$ is an isomorphism near $\sigma_{\hat S'}$. 

On the other hand, let $\hat{\mathcal{P}}$ be the proper transform of $\mathcal{P}$ in $\widehat W$. Then $\hat{\mathcal{P}}\to\mathcal{P}$ 
is the blowup of the point $p=\mathcal{P}\cap l$, and $\hat{\mathcal{P}}\cap\hat S'\supset\sigma_{\hat S'}$. Thus the image 
$\hat\varphi(\hat{\mathcal{P}})\subset Q^4$ should be a surface, and not a line. 
This yields as well a contradiction. 
\end{proof}

Examples show that the last assumption in Theorem \ref{theorem-1} cannot be omitted. 
Without this assumption one arrives at a singular fourfold $V$ in diagram \eqref{eq:diagram}, 
or else $\varphi$ is the blowup of a singular surface. According to Proposition \ref{lemma-existence-surfaces}, 
this does not happen for our choice of $F$. 

\section{Proof of Theorem \ref{theorem-1}.}\label{sec:proof-of-theorem-1}

Let us start with the following well known lemmas.

\begin{lemma}\label{lem:intersec-quadrics}
Any surface $F$ as in Theorem \textup{\textup{\ref{theorem-1}}} is a scheme theoretical intersection of quadrics.
\end{lemma}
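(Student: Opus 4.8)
The plan is to treat the two types of surface from Theorem \ref{theorem-1} separately, since in each case $F\subset\PP^6$ is a classically understood rational surface whose homogeneous ideal is generated by quadrics.

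In case \ref{case-g=10}, $F\cong\FF_1$ is a rational normal quintic scroll, nondegenerate in $\PP^6$, so that $\deg F=5$ equals $\operatorname{codim}_{\PP^6}F+1$; thus $F$ is a surface of \emph{minimal degree}. I would invoke the classical structure theorem for such surfaces (\cite[Ch.\ 4]{Griffiths-Harris-1978}, \cite{Harris1992}): a rational normal scroll is the determinantal locus cut out by the $2\times2$ minors of a $2\times4$ matrix of linear forms, and these quadrics already generate its homogeneous ideal. Hence $F$ is the scheme-theoretic (indeed ideal-theoretic) intersection of quadrics.

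In case \ref{case-g=9}, $F$ is an anticanonically embedded sextic del Pezzo surface, and the cleanest route is to realize $F$ as a linear section of a Segre variety. The smooth del Pezzo surface of degree $6$ is unique up to isomorphism, and its anticanonical model is projectively equivalent to $\Sigma\cap\Lambda$, where $\Sigma=\PP^2\times\PP^2\hookrightarrow\PP^8$ is the Segre fourfold and $\Lambda\cong\PP^6$ is a codimension-two linear subspace; this is forced by $\deg\Sigma=6=\deg F$ and is transparent from the presentation of $F$ as a divisor of bidegree $(1,1)$ on the flag threefold $X=X_6\subset\PP^2\times\PP^2$ used in the proof of Proposition \ref{lemma-existence-surfaces}. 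Since $\Sigma$ is scheme-theoretically cut out by the $2\times2$ minors of the generic $3\times3$ matrix of coordinates, its section $\Sigma\cap\Lambda$ is cut out in $\Lambda\cong\PP^6$ by the restrictions of these minors, which are quadrics. Thus $F$ is again a scheme-theoretic intersection of quadrics.

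The only genuinely delicate point is case \ref{case-g=9}: because $F$ has almost minimal rather than minimal degree, the minimal-degree theorem does not apply, and a bare Castelnuovo--Mumford estimate is too weak, as one checks that $\mathcal{I}_F$ is only $3$-regular (for instance $H^3(\PP^6,\mathcal{I}_F(-1))\cong H^2(F,K_F)\neq0$). The Segre realization sidesteps this by reducing quadric generation for $F$ to the classical case of $\Sigma=\PP^2\times\PP^2$; the formal input needed is the standard fact that for a subscheme cut out scheme-theoretically by forms of a fixed degree, the intersection with a linear subspace is cut out in that subspace, again scheme-theoretically, by the restrictions of the same forms.
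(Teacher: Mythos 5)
Your proof is correct and rests on the same classical facts the paper invokes: the paper's own proof consists of two citations --- \cite[Thm.\ 8.4.1]{Dolgachev-ClassicalAlgGeom} for the scroll and \cite[Lect.\ 9, Exs.\ 9.10--9.11]{Harris1992} for the sextic del Pezzo surface --- and your determinantal arguments (surface of minimal degree in case a); codimension-two linear section of the Segre fourfold $\PP^2\times\PP^2\subset\PP^8$ in case b)) are exactly what those references supply. One small correction: the rational normal quintic scroll $S(2,3)\subset\PP^6$ is cut out by the $2\times 2$ minors of a $2\times 5$ matrix of linear forms (a concatenation of catalecticant blocks of sizes $2\times 2$ and $2\times 3$), not of a $2\times 4$ matrix --- the rank-one locus of a $2\times 4$ matrix has codimension $3$, hence would be a threefold in $\PP^6$ rather than a surface.
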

\begin{proof} In case \ref{case-g=10} the assertion follows from
\cite[Thm.\ 8.4.1]{Dolgachev-ClassicalAlgGeom}, and in case \ref{case-g=9} from \cite[Lect.\ 9, Exs.\ 9.10--9.11]{Harris1992}.
\end{proof}

The next well known lemma is immediate.

\begin{lemma}\label{lem:morphism}
Let a smooth surface $F\subset \PP^n$, $n\ge 4$, be a scheme theoretical 
intersection of quadrics. Let $\tilde \PP^n\to \PP^n$ be the blowup of $F$ 
with exceptional divisor $T$. Then the linear system $|2H^* - T |$ defines a 
morphism $\tilde \PP^n\to \PP^N$, which contracts the proper transform of 
any $2$-secant line of $F$.
\end{lemma}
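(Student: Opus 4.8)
The plan is to show two things: that the linear system $|2H^*-T|$ is base-point free (hence defines a morphism), and that the proper transform of every $2$-secant line of $F$ is contracted by this morphism. Both follow from the hypothesis that $F$ is the scheme-theoretical intersection of the quadrics containing it.

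First I would unwind the meaning of $|2H^*-T|$ on the blowup $\tilde\PP^n$. Sections of $\mathcal O_{\tilde\PP^n}(2H^*-T)$ correspond, via $\tilde\PP^n\to\PP^n$, exactly to the quadrics in $\PP^n$ passing through $F$: the subtraction of the exceptional divisor $T$ records vanishing along $F$ to order one, which for a smooth $F$ equals the condition of containing $F$ scheme-theoretically. Thus $|2H^*-T|$ is the proper transform of the linear system $\mathcal Q$ of quadrics through $F$.

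Next, the base-point freeness. A base point of $|2H^*-T|$ on $\tilde\PP^n$ lying over a point $x\in\PP^n$ would be a point through which every quadric of $\mathcal Q$ passes with the appropriate multiplicity; away from $F$ this just says every such quadric contains $x$, and on $T$ it says the quadrics impose no further tangency condition. Since by hypothesis $F$ equals the scheme-theoretical intersection $\bigcap_{Q\in\mathcal Q}Q$, the only common zeros of the quadrics are the points of $F$ itself, and along $F$ the blowup exactly resolves the base locus because $F$ is smooth and cut out by the quadrics with the right multiplicities. Hence $|2H^*-T|$ has no base points and defines a morphism $\tilde\PP^n\to\PP^N$. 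I expect the verification of base-point freeness precisely along the exceptional divisor $T$ to be the main obstacle, since this is where the scheme-theoretical (as opposed to set-theoretical) hypothesis is genuinely used: one must check that the differentials of the defining quadrics span the conormal directions of $F$, which is exactly the content of $F$ being cut out by quadrics as a scheme at a smooth point.

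Finally, the contraction of $2$-secant lines. Let $\ell\subset\PP^n$ be a line meeting $F$ in a scheme of length at least $2$, and let $\tilde\ell$ be its proper transform. Any quadric $Q\in\mathcal Q$ restricts to a degree-two form on $\ell$ that vanishes on the length-$2$ subscheme $\ell\cap F$; since a degree-two form on $\ell\cong\PP^1$ vanishing on a length-$2$ subscheme vanishes identically, every $Q$ contains $\ell$. Passing to $\tilde\PP^n$, every section of $|2H^*-T|$ is then constant along $\tilde\ell$, so the morphism defined by $|2H^*-T|$ sends $\tilde\ell$ to a single point. This gives the asserted contraction and completes the argument.
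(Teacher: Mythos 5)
Your first two steps are sound, and in fact they supply content the paper omits entirely: the paper records this lemma as ``well known'' and ``immediate'' and gives no proof, so there is no written argument to compare against. Your identification of $H^0(\tilde\PP^n,\mathcal{O}(2H^*-T))$ with the quadrics through $F$, and your derivation of base-point freeness from the scheme-theoretic hypothesis (generation of $\mathcal{I}_F$ by quadrics pulls back to a surjection $H^0(\mathcal{I}_F(2))\otimes\mathcal{O}_{\tilde\PP^n}\to\mathcal{O}_{\tilde\PP^n}(2H^*-T)$, the fibrewise statement over $F$ being exactly the spanning of conormal directions by the differentials of the quadrics), are correct.

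Your last step, however, contains a genuine error. A nonzero degree-two form on $\PP^1$ vanishing on a length-$2$ subscheme does \emph{not} vanish identically: its zero divisor \emph{is} a length-$2$ scheme. (Identical vanishing is forced only when the length is at least $3$.) So it is false that every quadric through $F$ contains every $2$-secant line $\ell$, and this false claim collides with your own setup twice: it would put $\ell$ inside $\bigcap_{Q\in\mathcal{Q}}Q=F$, whereas a $2$-secant line is not contained in $F$; and it would place $\tilde\ell$ in the base locus of $|2H^*-T|$, contradicting the base-point freeness you just established. The correct mechanism of the contraction is a degree computation, not containment of $\ell$ in the quadrics. Since $\rho|_{\tilde\ell}:\tilde\ell\to\ell$ is an isomorphism and $\mathcal{I}_F\cdot\mathcal{O}_{\tilde\PP^n}=\mathcal{O}(-T)$, one gets $T\cdot\tilde\ell=\operatorname{length}(\ell\cap F)\ge 2$, hence $(2H^*-T)\cdot\tilde\ell\le 0$. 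Base-point freeness makes $\mathcal{O}(2H^*-T)|_{\tilde\ell}$ globally generated, hence of non-negative degree; therefore the degree is exactly $0$ and the restricted bundle is trivial. Concretely, each section restricts on $\tilde\ell$ to the constant $Q|_\ell/g$, where $g$ is the fixed degree-$2$ form cutting out $\ell\cap F$ on $\ell$; all sections being constant along $\tilde\ell$, the morphism $\Phi_{|2H^*-T|}$ sends $\tilde\ell$ to a point. (As a side remark, the argument you gave is the right one for showing that any line meeting $F$ in length $\ge 3$ lies in every quadric through $F$, hence in $F$ itself; this is why genuine $2$-secants meet $F$ in length exactly $2$.)
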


\begin{sit}\label{sit:pf-2.1} In what follows we keep the notation as in 
Theorem \textup{\ref{theorem-1}}. In particular, we let $g=10$ in case \ref{case-g=10} 
and $g=9$ in case \ref{case-g=9}. 

A surface $F\subset W$ as in Theorem \textup{\ref{theorem-1}} is contained in a unique 
hyperplane section $E=\langle F\rangle\cap W$ of $W$, see Remark \ref{lem:unique-hyperplane}. We let 
\begin{itemize}
\item 
$\rho:\widetilde W\longrightarrow W$ be the blowup of $F$
with exceptional divisor $D$, 
\item 
$\tilde E\subset\widetilde W$ be the proper transform of $E$, 
\item 
$H\subset W$ be a general hyperplane section, and 
\item 
$H^*=\rho^*H\in {\rm Div}\,W$. 
\end{itemize}
Clearly, one has $\rk\Pic\widetilde W=2$ and $\tilde E\sim H^*-D$ on $\widetilde W$. 
\end{sit}

\begin{lemma}\label{lem:Fano}
The variety $\widetilde W$ is a smooth Fano fourfold. 
\end{lemma}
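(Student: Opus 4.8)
The plan is to verify smoothness formally and then to prove ampleness of $-K_{\widetilde W}$ by writing it as a sum of two nef classes and applying Kleiman's criterion. Smoothness is immediate: by \ref{sit:pf-2.1}, $\rho\colon\widetilde W\to W$ is the blowup of the smooth fourfold $W$ along the smooth surface $F$, so $\widetilde W$ is smooth. Since $F$ has codimension $2$ in $W$, the canonical bundle formula for a blowup gives $K_{\widetilde W}=\rho^*K_W+D$, whence, using $-K_W=3H$,
\[
-K_{\widetilde W}=\rho^*(-K_W)-D=3H^*-D .
\]

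Next I would decompose this as
\[
-K_{\widetilde W}=H^*+(2H^*-D)
\]
and argue that both summands are nef. The class $H^*=\rho^*H$ is the pullback of the ample generator $H$ of $\Pic(W)$, hence nef. For the second summand I invoke Lemmas \ref{lem:intersec-quadrics} and \ref{lem:morphism}: since $F$ is a scheme theoretical intersection of quadrics, the linear system $|2H^*-D|$ is base point free, being the restriction to $\widetilde W$ of the base point free system producing the morphism of Lemma \ref{lem:morphism} on the blowup of $\PP^7$ along $F$. In particular $2H^*-D$ is nef, and therefore $-K_{\widetilde W}$ is nef.

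To upgrade nef to ample I would run a Kleiman-type argument, using $\rk\Pic\widetilde W=2$ from \ref{sit:pf-2.1}. Suppose $-K_{\widetilde W}$ is not ample; then there is a nonzero class $z\in\NE(\widetilde W)$ with $(-K_{\widetilde W})\cdot z=0$. As $-K_{\widetilde W}$ is a sum of the two nef classes $H^*$ and $2H^*-D$, both must vanish on $z$. From $H^*\cdot z=H\cdot\rho_*z=0$ and the ampleness of $H$ on $W$ we get $\rho_*z=0$; since $\rk\Pic\widetilde W=2$, the kernel of $\rho_*$ is the line spanned by the class $\ell$ of a fiber of $D\to F$, so $z=\mu\ell$ for some $\mu\in\RR$. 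But then $0=(2H^*-D)\cdot z=\mu\,(2H^*-D)\cdot\ell=\mu$, since $(2H^*-D)\cdot\ell=0-(-1)=1$, so $z=0$, a contradiction. Hence $-K_{\widetilde W}$ is ample and $\widetilde W$ is a smooth Fano fourfold.

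The main obstacle, and the only nonformal ingredient, is the nef-ness of $2H^*-D$, i.e. the base point freeness of $|2H^*-D|$; this is exactly what Lemmas \ref{lem:intersec-quadrics} and \ref{lem:morphism} are designed to supply, the point being that the quadrics through $F$ cut it out scheme-theoretically and so separate $\widetilde W$ off the proper transforms of the relevant secant lines. Everything else (the canonical bundle formula, the numerical data $H^*\cdot\ell=0$ and $D\cdot\ell=-1$, and the identification of $\ker\rho_*$) is routine once $\rk\Pic\widetilde W=2$ is in hand.
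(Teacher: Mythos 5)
Your proof is correct and follows essentially the same route as the paper: the same decomposition $-K_{\widetilde W}=H^*+(2H^*-D)$ into two nef classes (nef because the corresponding linear systems are free), followed by the Kleiman criterion using $\rk\Pic\widetilde W=2$. The only differences are expository: the paper simply notes that the two nef divisors are not proportional and concludes at once, while you unwind the same Kleiman argument explicitly via the fiber class of $D\to F$, and you spell out the role of Lemmas \ref{lem:intersec-quadrics} and \ref{lem:morphism} in the freeness of $|2H^*-D|$, which the paper leaves implicit.
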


\begin{proof}
We have
\begin{equation*}
-K_{\widetilde W}=3H^*-D=2H^*-D+H^*\,,
\end{equation*} 
where both $2H^*-D$ and $H^*$ are nef, because the linear systems
$|2H^*-D|$ and $|H^*|$ are free. Since $\rk\Pic\widetilde W=2$ and the 
nef divisors $2H^*-D$ and $H^*$ are not proportional,
their sum is an ample divisor by the Kleiman ampleness criterion. 
\end{proof}

The nef and non-ample linear systems $|H^*|$ and $|2H^*-D|$ on $\widetilde W$ define 
the two extremal Mori contractions on $\widetilde W$. The first one is $\rho:\widetilde W\to W$; 
the second one $\varphi:\widetilde W\to V$ makes the subject of our following studies. We need the next lemma. 

\begin{lemma}\label{lemma-equation-intersection}
On $\widetilde W$ one has $({H^*})^4=5$,\quad $(H^*)^3\cdot D=0$,
\begin{equation*}
(H^*)^2\cdot D^2=
\begin{cases}
-5
\\
-6
\end{cases},
\
H^*\cdot D^3=\begin{cases}
-8
\\
-12
\end{cases},
\
D^4=\begin{cases}
-6&\text{in case \ref{case-g=10}}
\\
-16&\text {in case \ref{case-g=9}\,.}
\end{cases}
\end{equation*}
\end{lemma}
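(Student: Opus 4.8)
The plan is to compute all the listed intersection numbers on $\widetilde W$ by using the standard formalism for blowups of smooth surfaces in a smooth fourfold. The key is that the blowup $\rho\colon\widetilde W\to W$ of the smooth surface $F\subset W$ has exceptional divisor $D=\PP(\mathscr N_{F/W}^\vee)$, a $\PP^1$-bundle over $F$, and the intersection theory on $\widetilde W$ is governed by the projection formula together with the classical relations for the Segre classes of the normal bundle. First I would record the easy facts: since $H^*=\rho^*H$ is a pullback, $(H^*)^4=H^4=\deg W=5$ and $(H^*)^3\cdot D=0$, the latter because $\rho_*(D)=0$ (the exceptional divisor contracts to the surface $F$, which has codimension $2$, so $(H^*)^3\cdot D=H^3\cdot\rho_*D=0$).

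Next I would handle the mixed terms $(H^*)^2\cdot D^2$ and $H^*\cdot D^3$. The central computational tool is the relation $D|_D=-\xi$, where $\xi$ is the tautological (relative hyperplane) class on the $\PP^1$-bundle $D\to F$, satisfying the Grothendieck relation $\xi^2=\xi\cdot c_1(\mathscr N_{F/W})-c_2(\mathscr N_{F/W})$ pulled back from $F$. Restricting $H^*$ to $D$ gives the pullback of $H|_F$. Then $(H^*)^2\cdot D^2=(H^*|_D)^2\cdot(D|_D)=-(H|_F)^2\cdot\xi$, which by the projection formula on the $\PP^1$-bundle equals $-(H|_F)^2=-H^2\cdot F=-\deg F$; this gives $-5$ in case \ref{case-g=10} and $-6$ in case \ref{case-g=9}, matching the stated values. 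Similarly $H^*\cdot D^3=-(H|_F)\cdot\xi^2=-(H|_F)\cdot\bigl(c_1(\mathscr N_{F/W})-\text{(lower)}\bigr)$, which reduces to a degree-one intersection number on $F$ involving $c_1(\mathscr N_{F/W})=c_1(W)|_F-c_1(F)$ from the normal bundle sequence \eqref{eq:c1}.

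Finally, for $D^4=(D|_D)^3=-\xi^3$, I would use the Grothendieck relation twice to reduce $\xi^3$ to a combination of $c_1(\mathscr N_{F/W})^2$, $c_1(\mathscr N_{F/W})\cdot c_2(\mathscr N_{F/W})$, and $c_2(\mathscr N_{F/W})$, then integrate over $F$ via $\int_{\PP^1\text{-bundle}}\xi = 1$ on fibers. Concretely, for a rank-two normal bundle on a surface one has $\xi^3=\xi\cdot\bigl(c_1^2-c_2\bigr)\cdot(\text{pullback})$ after one substitution, so $D^4$ becomes $-\bigl(c_1(\mathscr N_{F/W})^2-c_2(\mathscr N_{F/W})\bigr)$ evaluated on $F$; here $c_2(\mathscr N_{F/W})=F^2=2a^2+b^2-2ab$ as computed in the proof of Lemma \ref{lem:enumerative}, and $c_1(\mathscr N_{F/W})^2$ is obtained from \eqref{eq:c1} and the Noether formula for the del Pezzo or scroll surface $F$. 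The hard part, though entirely mechanical, will be keeping the Segre/Chern bookkeeping straight when reducing $\xi^3$: one must correctly expand $\xi^2=c_1\xi-c_2$ and then multiply again by $\xi$, being careful that all terms not containing a single $\xi$ factor integrate to zero on the fibers of $D\to F$. I expect the two invariants $c_1(\mathscr N_{F/W})^2$ and $c_2(\mathscr N_{F/W})$ to be exactly the data already assembled in Lemma \ref{lem:enumerative}, so the computation closes by substituting $(a,b)=(2,3)$ in case \ref{case-g=10} and $(a,b)=(2,4)$ in case \ref{case-g=9}, yielding $D^4=-6$ and $D^4=-16$ respectively.
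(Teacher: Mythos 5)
Your overall strategy coincides with the substance of the paper's proof: the paper simply invokes the three blowup formulas of \cite[Lem.~1.4]{Prokhorov-Zaidenberg-2014}, namely $(H^*)^2\cdot D^2=-H^2\cdot F$, \ $H^*\cdot D^3=-H|_F\cdot K_F-3H^2\cdot F$, \ $D^4=c_2(W)\cdot F+K_W|_F\cdot K_F-c_2(F)-K_W^2\cdot F$, and plugs in the data of Lemma~\ref{lem:enumerative}; these formulas are exactly what your projective-bundle/Segre-class computation is meant to reproduce, and your final values $-5/-6$, $-8/-12$, $-6/-16$ are the correct ones.

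There is, however, a sign-convention clash in your derivation that must be repaired, because it affects $H^*\cdot D^3$. Since $D=\PP(\mathscr N_{F/W}^{\vee})$ is the Proj of the \emph{conormal} bundle, the tautological class $\xi$ normalized by $D|_D=-\xi$ satisfies the Grothendieck relation with the Chern classes of $\mathscr N_{F/W}^{\vee}$, that is, $\xi^2=-\pi^*c_1(\mathscr N_{F/W})\cdot\xi-\pi^*c_2(\mathscr N_{F/W})$, whence $\pi_*\xi^2=-c_1(\mathscr N_{F/W})$; the relation you wrote, $\xi^2=\pi^*c_1(\mathscr N_{F/W})\cdot\xi-\pi^*c_2(\mathscr N_{F/W})$, holds for the other normalization of the tautological class, for which $D|_D=\pi^*c_1(\mathscr N_{F/W})-\xi$ rather than $-\xi$. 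If one follows your two stated conventions consistently, then $H^*\cdot D^3=(H^*|_D)\cdot(D|_D)^2=+(H|_F)\cdot\xi^2$ pushes forward to $+H|_F\cdot c_1(\mathscr N_{F/W})=+8$ resp.\ $+12$, the wrong sign; you land on the stated answer only because of the unexplained extra minus in ``$H^*\cdot D^3=-(H|_F)\cdot\xi^2$'', a second slip cancelling the first. The computation of $D^4$ is immune to this issue: $\pi_*\xi^3=c_1^2-c_2$ under either normalization because $c_1$ enters quadratically, so your formula $D^4=c_2(\mathscr N_{F/W})-c_1(\mathscr N_{F/W})^2$ and the values $-6$, $-16$ are right. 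So fix the conventions once and for all, and finish the arithmetic for $H^*\cdot D^3$: with $c_1(\mathscr N_{F/W})=3H|_F+K_F$ one needs $H|_F\cdot K_F$, which equals $-7$ for the scroll embedded by $|\sigma+3f|$ and $-6$ for the anticanonically embedded sextic del Pezzo surface, giving $-8$ and $-12$. With these corrections your argument is complete and is essentially a re-derivation of the formulas the paper cites.
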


\begin{proof} The lemma follows easily from the equalities (see \cite[Lem.\ 1.4]{Prokhorov-Zaidenberg-2014})
\begin{equation*}
(H^*)^2\cdot D^2=-F\cdot H^2\,,
\end{equation*}
\begin{equation*}
H^*\cdot D^3=-H|_F\cdot K_F-3H\cdot H\cdot F\,,
\end{equation*} 
and
\begin{equation*}
D^4= c_2(W) \cdot F+K_W|_F\cdot K_F-c_2(F)-K_W^2\cdot F\,.
\end{equation*}
\end{proof}

\begin{lemma}\label{lem:5.6} Let $U$ be a Mukai fourfold of genus 
$g(U)\ge 4$ with at worst terminal Gorenstein singularities and with $\rk \Pic U=1$. 
Assume that the linear system $|-\frac12 K_U|$ is base point free.
Then the divisor $-\frac 12 K_U$ is very ample and defines an embedding $U\hookrightarrow\PP^{g+2}$. 
\end{lemma}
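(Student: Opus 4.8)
The plan is to set $L=-\tfrac12 K_U$, so that $-K_U=2L$, $L^4=2g-2$, and $L$ is the ample generator of $\Pic U\cong\ZZ$. First I would pin down the target. Since $L-K_U=3L$ is ample and $U$ has at worst terminal Gorenstein (hence rational, Cohen--Macaulay) singularities, Kawamata--Viehweg vanishing gives $H^i(U,L)=0$ for $i>0$, so $h^0(U,L)=\chi(U,L)$; a Riemann--Roch computation on the fourfold then yields $h^0(U,L)=g+3$. Together with the linear independence of these sections, this shows that the morphism $\phi=\phi_{|L|}$ attached to the base point free system $|L|$ maps $U$ onto a nondegenerate fourfold $U'\subset\PP^{g+2}$. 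Moreover $\phi$ is finite: any curve $C\subset U$ contracted by $\phi$ would satisfy $L\cdot C=0$, contradicting the ampleness of $L$. Hence it remains to prove that $\phi$ is a closed embedding.

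The key step is a reduction to a general hyperplane section. Let $A\in|L|$ be general; by adjunction $-K_A=L|_A$, so $A$ is a Fano threefold of (sectional) genus $g$ with at worst terminal Gorenstein singularities, and by a Grothendieck--Lefschetz type argument for the ample divisor $A$ on the fourfold $U$ one gets $\rk\Pic A=1$ with $L|_A$ its ample generator. Since $U$ is Fano, $H^1(U,\mathcal{O}_U)=0$, so the cohomology sequence of
\begin{equation*}
0\longrightarrow \mathcal{O}_U\longrightarrow L\longrightarrow L|_A\longrightarrow 0
\end{equation*}
shows that the restriction $H^0(U,L)\to H^0(A,L|_A)$ is surjective. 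Consequently $\phi|_A$ is exactly the morphism defined by the complete system $|{-}K_A|$ on $A$, and $\phi(A)$ is a hyperplane section of $U'$.

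I would then invoke very ampleness of the fundamental divisor one dimension down: for a Fano threefold $A$ with $\rk\Pic A=1$, terminal Gorenstein singularities and genus $g\ge 4$, the class $-K_A=L|_A$ is very ample, the excluded small genera $g\le 3$ being exactly the hyperelliptic and trigonal cases. Granting this, $L$ is very ample on $U$ by the standard lifting principle: once $L|_A$ is very ample and $H^0(U,L)\to H^0(A,L|_A)$ is surjective, the base point free system $|L|$ separates points and tangent directions on $U$, and since $\phi$ is finite this gives the embedding $U\hookrightarrow\PP^{g+2}$.

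The main obstacle is this third step, namely the very ampleness of $-K_A$ on the singular threefold $A$ and the precise role of the bound $g\ge 4$. Equivalently, and this is how I would organize the verification, one can argue numerically: writing $d=\deg\phi$ and using that the nondegenerate fourfold $U'\subset\PP^{g+2}$ has degree at least $\operatorname{codim}U'+1=g-1$, the relation $d\cdot\deg U'=L^4=2g-2$ forces $d\le 2$, with $d=2$ occurring only if $U'$ is a fourfold of minimal degree $g-1$ and $\phi$ is a double cover. The heart of the matter is to exclude this hyperelliptic alternative for $g\ge 4$; it is ruled out because the induced double cover would force the general canonical curve section (carried by the K3 surface section $A\cap A'$), together with $\rk\Pic U=1$, into a configuration available only for $g\le 3$. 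Controlling the singularities of the ladder members $A\supset A\cap A'\supset\cdots$, so that the lower-dimensional very ampleness results genuinely apply, is the delicate point I would need to treat with care.
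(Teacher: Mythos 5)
Your proposal takes essentially the same route as the paper: the paper's proof is exactly a reduction, by recursion on dimension via general hyperplane sections with the restriction--surjectivity lifting argument (as in Iskovskikh's Lemma (2.8)), to the known very-ampleness results for anticanonical divisors of Gorenstein Fano threefolds of genus $\ge 4$, citing Mukai, Iskovskikh--Prokhorov, and Przhiyalkovski\u{\i}--Chel'tsov--Shramov for the hyperelliptic/trigonal analysis that you sketch in your last two paragraphs. The paper compresses this into a citation; your write-up is the same ladder argument spelled out, including an honest flag of the same delicate point (singularities of the ladder members and the exclusion of the hyperelliptic case) that the paper delegates to those references.
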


\begin{proof}
This follows from the corresponding result in the three-dimensional case, see \cite[Prop.\ 1]{Mukai-1989}, \cite{Iskovskikh-Prokhorov-1999}, and
\cite{Przhiyalkovskij-Cheltsov-Shramov-2005en}, by recursion on the dimension, likewise this is done in \cite[Lem.\ (2.8)]{Iskovskih1977a}. 
\end{proof}

\begin{sit}\label{sit:Stein factorization} Using Lemma \ref{lemma-equation-intersection} we obtain 
\begin{equation}\label{eq:4th power}
\deg V=(2H^*-D)^4=2g-2=\begin{cases} 18 & \mbox{in case \ref{case-g=10}}\\
16& \mbox{in case \ref{case-g=9}}\,
\end{cases}\end{equation}
and
\begin{equation}\label{eq:LE=0} 
\tilde E\cdot (2H^*-D)^3=(H^*-D)\cdot (2H^*-D)^3=0\,.
\end{equation}
Therefore,
the linear system $|2H^*-D|$ defines a generically finite morphism 
\begin{equation*}
\Phi_{|2H^*-D|}:\widetilde W\to V\subset\PP^{g+2}\,
\end{equation*}
onto a fourfold $V$, where $\Phi_{|2H^*-D|}$ contracts the divisor $\tilde E\sim H^*-D$.
Consider the Stein factorization
\begin{equation*}
\Phi_{|2H^*-D|}:\widetilde W\stackrel{\varphi}{\longrightarrow} U\to V\subset\PP^{g+2}\,.
\end{equation*}
Here $\varphi$ is a divisorial Mori contraction, and $\Pic U=\ZZ\cdot L$, where $L$ is an ample 
Cartier divisor with $\varphi^*L=2H^*-D$.
Once again, the exceptional divisor of $\varphi$ is $\tilde E\sim H^*-D$.
Hence $D\sim\varphi^* L-2E$. 
\end{sit}

\begin{lemma}\label{lem:U-Mukai}
The variety $U$ as in \textup{\ref{sit:Stein factorization}} is a Mukai fourfold with at worst terminal 
Gorenstein singularities and $\rk\Pic U=1$. 
\end{lemma}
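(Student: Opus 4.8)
The plan is to extract every asserted property of $U$ from what was already set up in \ref{sit:Stein factorization}, namely that $\varphi:\widetilde W\to U$ is a divisorial Mori contraction from the smooth Fano fourfold $\widetilde W$ (Lemma \ref{lem:Fano}) with $\Pic U=\ZZ\cdot L$, $\varphi^*L=2H^*-D$, and contracted divisor $\tilde E\sim H^*-D$. The condition $\rk\Pic U=1$ is already recorded there; in particular $U$ is $\mathbb{Q}$-factorial, and being the Stein factor it is normal and projective, and of dimension $4$ since $\varphi$ is birational. So the real work is to identify the singularities of $U$ and its canonical class.

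First I would observe that $\varphi$ contracts a $K_{\widetilde W}$-negative ray. Indeed, by Lemma \ref{lem:Fano} the divisor $-K_{\widetilde W}=3H^*-D$ is ample, so $K_{\widetilde W}\cdot C<0$ for every curve $C\subset\widetilde W$. Hence $\varphi$ is an extremal divisorial contraction of a $K_{\widetilde W}$-negative ray starting from a smooth (in particular terminal, $\mathbb{Q}$-factorial) fourfold. By the standard theory of the minimal model program for such contractions, the target $U$ again has $\mathbb{Q}$-factorial terminal singularities.

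Next I would compute the canonical class via the discrepancy formula $K_{\widetilde W}=\varphi^*K_U+a\,\tilde E$. Using $\varphi^*L=2H^*-D$ and $\tilde E\sim H^*-D$ one rewrites
\[
-K_{\widetilde W}=3H^*-D=2(2H^*-D)-(H^*-D)=2\varphi^*L-\tilde E,
\]
so that $K_{\widetilde W}=\varphi^*(-2L)+\tilde E$. Comparing with the discrepancy formula (the decomposition being unique, as $\tilde E$ is the $\varphi$-exceptional divisor) gives $K_U=-2L$ and discrepancy $a=1>0$, consistent with terminality. Since $L$ is Cartier, $K_U=-2L$ is Cartier, so $U$ is Gorenstein. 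Moreover $-K_U=2L$ is ample with $L$ the ample generator of $\Pic U$, whence $U$ is a Fano fourfold of index exactly $2$, i.e.\ a Mukai fourfold; its genus is read off from $\deg U=(2H^*-D)^4=2g-2$ via \eqref{eq:4th power}, giving $g=10$ in case \ref{case-g=10} and $g=9$ in case \ref{case-g=9}.

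The hard part will be the terminality statement: the explicit positive discrepancy $a=1$ along $\tilde E$ is only the necessary sign, and to conclude that $U$ is terminal at every point (and not merely canonical or worse at the image $\varphi(\tilde E)$) one genuinely has to invoke that $\varphi$ is an extremal $K$-negative divisorial contraction from a terminal variety, which is exactly where the minimal model program machinery enters. Everything else is bookkeeping with the intersection data of \eqref{equation-intersections}.
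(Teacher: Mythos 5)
Your proposal is correct and follows essentially the same route as the paper: terminality (and $\mathbb{Q}$-factoriality) of $U$ from the general theory of $K$-negative divisorial Mori contractions of terminal varieties, plus the computation $-K_{\widetilde W}=3H^*-D=2(2H^*-D)-\tilde E=2\varphi^*L-\tilde E$, which pushes forward to $-K_U=2L$, an ample Cartier divisor divisible by $2$ in $\Pic U=\ZZ\cdot L$. Your added details (ampleness of $-K_{\widetilde W}$ guaranteeing $K$-negativity, and the uniqueness of the discrepancy decomposition giving $a=1$) are correct elaborations of steps the paper leaves implicit.
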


\begin{proof}
Since $\varphi$ is a divisorial Mori contraction,
$U$ has at worst terminal singularities. We have $\rk\Pic U=1$ because $\rk \Pic \widetilde W=2$. 
Since 
\begin{equation*}
-K_{\tilde W}= 3H^*-D= 2(2H^*-D) -\tilde E\,
\end{equation*} 
we also have
$ -K_U = 2L$. Hence $-K_U$ is an ample Cartier divisor divisible by $2$ in $\Pic U$.
So $U$ is a Mukai fourfold. 
\end{proof}

\begin{convention}
\label{nota:U-V} 
The morphism $U\to V\subset \PP^{g+2}$ is 
given by the linear system $|L|=|-\frac12 K_U|$.
As follows from Lemma \ref{lem:5.6}, this is an isomorphism.
In the sequel we identify $V$ with $U$ and $\Phi_{|2H^*-D|}$ with $\varphi$.
\end{convention}

\begin{lemma}\label{cor:5.4}
For the image $V= \varphi (\widetilde W)\subset \PP^{g+2}$
the following hold.
\begin{enumerate}
\item
The morphism
$\varphi: \widetilde W\to V$ is birational and $\deg V=2g-2$;
\item
the morphism $\varphi$ contracts the divisor $\tilde E$
to an irreducible surface $S\subset V$;
\item
$\deg S= g-7=\begin{cases} 3 & \mbox{in case}\quad \ref{case-g=10}\\
2 & \mbox{in case}\quad \ref{case-g=9}\,;
\end{cases}$
\item
$S$ can have only isolated singularities.
\end{enumerate}
\end{lemma}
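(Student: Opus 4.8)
The plan is to read off~(1) from the construction in~\ref{sit:Stein factorization}, and to derive~(2)--(4) from the intersection numbers of Lemma~\ref{lemma-equation-intersection} together with the structure theory of divisorial extremal contractions. Throughout I write $S:=\varphi(\tilde E)$; it is irreducible, being the image of the irreducible divisor $\tilde E$, which already gives the irreducibility asserted in~(2). For~(1), by~\ref{sit:Stein factorization} and Convention~\ref{nota:U-V} the map $\varphi:\widetilde W\to V$ is a birational divisorial Mori contraction with $\varphi^*L=2H^*-D$ and exceptional divisor $\tilde E$; since $\varphi$ is birational, $\deg V=L^4=(2H^*-D)^4=2g-2$ by \eqref{eq:4th power}.

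To establish $\dim S=2$ in~(2) I would bound the dimension from both sides numerically, setting $A:=(\varphi^*L)|_{\tilde E}=(\varphi|_{\tilde E})^*(L|_S)$. The vanishing $A^3=(2H^*-D)^3\cdot\tilde E=0$ from \eqref{eq:LE=0} forbids $\varphi|_{\tilde E}$ from being generically finite onto a threefold, so $\dim S\le 2$ (this also follows from the fact that the exceptional divisor of a divisorial contraction maps to a locus of codimension $\ge 2$). For the opposite bound I would compute, using Lemma~\ref{lemma-equation-intersection},
\[
A^2\cdot(H^*|_{\tilde E})=(2H^*-D)^2\cdot(H^*-D)\cdot H^*
=\begin{cases}3&\text{in case \ref{case-g=10}}\\ 2&\text{in case \ref{case-g=9}}\end{cases}
\]
which is positive; as $H^*$ is nef this forces $A^2\neq 0$ as a $1$-cycle on $\tilde E$, hence $\dim S\ge 2$ and so $\dim S=2$. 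For the degree in~(3) I would use that over a dense open subset of $S$ the contraction $\varphi$ is the blowup of a smooth surface in the smooth locus of $V$, so that $\varphi_*\tilde E^2=-[S]$ as $2$-cycles on $V$ (a $0$-dimensional bad locus cannot contribute to a $2$-cycle); the projection formula then yields
\[
\deg S=(L|_S)^2=L^2\cdot[S]=-(\varphi^*L)^2\cdot\tilde E^2=-(2H^*-D)^2\cdot(H^*-D)^2 ,
\]
which equals $3=g-7$ in case \ref{case-g=10} and $2=g-7$ in case \ref{case-g=9}.

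The delicate point is~(4). The fibres of $\varphi|_{\tilde E}:\tilde E\to S$ are one-dimensional over a dense open subset, and the locus $S_2\subset S$ over which the fibre dimension jumps to $\ge 2$ is finite: if $\dim S_2\ge 1$, then $\varphi^{-1}(S_2)\cap\tilde E$ would have dimension $\ge 3$ and thus coincide with the irreducible threefold $\tilde E$, forcing $\dim S\le 1$, a contradiction. Over $V\setminus\varphi(S_2)$ every fibre of $\varphi$ has dimension $\le 1$, so by the classification of divisorial Fano--Mori contractions of smooth fourfolds $\varphi$ restricts there to the blowup of a \emph{smooth} surface in a smooth fourfold; hence $S\setminus S_2$ is smooth and $\Sing S\subseteq S_2$ is finite. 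The main obstacle is precisely this fibre analysis over the bad locus, together with the appeal to the contraction classification, on which the identity $\varphi_*\tilde E^2=-[S]$ used in~(3) also rests; everything else is a direct bookkeeping of the numbers in Lemma~\ref{lemma-equation-intersection}.
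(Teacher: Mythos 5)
Your proposal is correct and follows essentially the same route as the paper: part (1) is read off from \ref{sit:Stein factorization} and Convention \ref{nota:U-V}, parts (2)--(3) come from the intersection numbers of Lemma \ref{lemma-equation-intersection} (the paper uses the single number $(2H^*-D)^2\cdot\tilde E^2=-\deg S\neq 0$ to get both the dimension and the degree, where you use the auxiliary positive number $(2H^*-D)^2\cdot\tilde E\cdot H^*$ for the dimension bound --- a cosmetic difference), and part (4) uses finiteness of the two-dimensional fibers plus the Andreatta--Wi\'sniewski/Ando classification, exactly as in the paper. Your explicit dimension-count proving that the jump locus is finite, and your justification of $\varphi_*\tilde E^2=-[S]$, merely spell out steps the paper leaves implicit.
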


\begin{proof} 
Upon convention \ref{nota:U-V}, $\varphi$ is birational. By \eqref{eq:4th power}
we have $\deg V=2g-2$. By virtue of \eqref{eq:LE=0}, $\tilde E$ is the exceptional 
divisor of $\varphi$.
Using Lemma \ref{lemma-equation-intersection} we deduce the equalities
\begin{eqnarray*} 
(2H^*-D)^2\cdot \tilde E^2= (2H^*-D)^2(H^*-D)^2=\begin{cases}
-3 & \mbox{in case}\quad \ref{case-g=10}\\
-2 & \mbox{in case}\quad \ref{case-g=9}\,.
\end{cases}
\end{eqnarray*}
Since the latter number is nonzero, $S$ is a surface of degree 
\[
\deg S=- (2H^*-D)^2\cdot \tilde E^2 
\]
satisfying (iii). 

Since $\rk\Pic\widetilde W=2$,
the exceptional locus of $\varphi$ coincides with $\tilde E$, and $\tilde E$ is a prime divisor. 
Therefore, $\varphi$ has at most a finite number of 2-dimensional fibers. By the Andreatta--Wisniewski 
Theorem (\cite{AndreattaWisniewski1998}) $S$ has at most isolated singularities. 
\end{proof}

\begin{corollary}\label{lem:normal} 
The surface $S$ is normal.
\end{corollary}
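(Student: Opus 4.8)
The plan is to reduce normality to the Cohen--Macaulay property and then exploit the very small degree of $S$. By Lemma \ref{cor:5.4}(iv) the surface $S$ has at worst isolated singularities, so its singular locus has codimension $2$ in $S$; hence $S$ satisfies Serre's condition $R_1$. By Serre's criterion it therefore suffices to verify the condition $S_2$, which for a two--dimensional variety is the same as $S$ being Cohen--Macaulay. Thus the whole task reduces to proving that $S$ is Cohen--Macaulay.

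To this end I would bound the dimension of the linear span of $S$. Write $N=\dim\langle S\rangle$, so that $S$ is an irreducible, reduced and non--degenerate surface in $\PP^N$. Then its degree is at least its codimension plus one, i.e. $\deg S\ge N-1$. On the other hand $\deg S=g-7\le 3$ by Lemma \ref{cor:5.4}(iii). Hence $N\le 4$, and only two possibilities remain for the position of $S$ in its span.

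If $N\le 3$, then $S$ is a hypersurface in $\PP^N$ (a quadric when $\deg S=2$, a cubic when $\deg S=3$), and a hypersurface in a smooth ambient space is automatically Cohen--Macaulay. If $N=4$, then necessarily $\deg S=3=N-1$, so $S$ is a surface of minimal degree; by the Bertini--Del Pezzo classification (see e.g. \cite{Griffiths-Harris-1978}) such a surface is arithmetically Cohen--Macaulay, in particular Cohen--Macaulay. In either case $S$ is Cohen--Macaulay, and combined with the condition $R_1$ established above this yields normality by Serre's criterion.

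The only delicate point is to make sure that no non--normal surface of degree $\le 3$ slips through, and this is precisely where the isolated--singularities hypothesis is indispensable: the potential non--normal candidates of such low degree (a non--normal cubic surface, or a projection of a scroll) are singular along a whole curve and hence violate $R_1$. They are thereby excluded by Lemma \ref{cor:5.4}(iv), which itself rests on the Andreatta--Wisniewski Theorem. I expect this interplay---isolated singularities forcing $R_1$, and the low degree forcing $S_2$---to be the crux of the argument, the remaining steps being standard.
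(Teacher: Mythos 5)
Your proof is correct and takes essentially the same route as the paper: both arguments hinge on the degree bound $\deg S\le 3$ from Lemma \ref{cor:5.4}(iii) together with the isolated-singularities statement (iv), which is what excludes the non-normal candidates. The paper's proof is simply a compressed version of yours---it asserts directly that a non-normal cubic surface must be degenerate (lie in a $\PP^3$) and be singular along a curve, which is precisely the fact you re-derive via Serre's criterion, using that hypersurfaces and surfaces of minimal degree are Cohen--Macaulay so that normality reduces to $R_1$.
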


\begin{proof}
The assertion is certainly true if $\deg S=2$. If $\deg S=3$ and the cubic surface $S\subset\PP^4$ is not normal, 
then $S$ is contained in a 3-dimensional subspace and the singular locus of $S$ is 1-dimensional, which 
contradicts (iv).
\end{proof}

\begin{lemma}\label{lem:replacement of 2.6 and 2.7}
In the notation of \textup{\ref{sit:Stein factorization}}
the morphism $\varphi:\widetilde W\to V$ is the blowup of the surface $S$, where both $S$ and $V$ are smooth. 
\end{lemma}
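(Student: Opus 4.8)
The plan is to regard $\varphi\colon\widetilde W\to V$ as the extremal divisorial Mori contraction associated with the nef and non-ample class $2H^*-D$ on the smooth Fano fourfold $\widetilde W$ (Lemma \ref{lem:Fano} and \ref{sit:Stein factorization}), to compute its discrepancy along $\tilde E$, and then to feed this into the Andreatta--Wisniewski structure theorem. First I would determine the discrepancy. Writing $K_{\widetilde W}=\varphi^*K_V+a\tilde E$ and substituting $-K_{\widetilde W}=3H^*-D$ (Lemma \ref{lem:Fano}), $-K_V=2L$ (Lemma \ref{lem:U-Mukai}), $\varphi^*L=2H^*-D$ and $\tilde E\sim H^*-D$, one gets
\[
-3H^*+D=(-4+a)H^*+(2-a)D,
\]
so that $a=1$. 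Next I would identify the contracted curves: by Lemma \ref{lem:morphism} the morphism $\Phi_{|2H^*-D|}$ contracts precisely the proper transforms of the $2$-secant lines of $F$, and for a general such line $\ell$ one has $(2H^*-D)\cdot\ell=0$, $H^*\cdot\ell=1$ and $D\cdot\ell=2$; thus $\varphi|_{\tilde E}\colon\tilde E\to S$ is generically a $\PP^1$-fibration whose fibres are transforms of $2$-secant lines of $F$.

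With the discrepancy equal to $1$ and $\tilde E$ a prime divisor contracted onto the surface $S$, I would invoke the Andreatta--Wisniewski classification of divisorial contractions of smooth fourfolds (\cite{AndreattaWisniewski1998}): \emph{provided every fibre of $\varphi$ has dimension at most $1$}, the fourfold $V$ is smooth along $S$, the surface $S$ is smooth, and $\varphi$ is the blowup of $S$. Since $\rho$, and hence $\varphi$, is an isomorphism away from $\tilde E$, this would give smoothness of $V$ everywhere and establish the lemma. By Lemma \ref{cor:5.4}(iv) together with Corollary \ref{lem:normal}, the only possible obstruction consists of finitely many $2$-dimensional fibres of $\varphi$, lying over isolated singular points of the normal surface $S$; as $S$ has minimal degree $g-7\in\{2,3\}$, such a point could only be the vertex of a cone. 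Hence the whole statement reduces to ruling out $2$-dimensional fibres.

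The main obstacle is exactly this exclusion, and it is where the standing hypothesis of Theorem \ref{theorem-1}---that no plane in $W$ meets $F$ along a (possibly degenerate) conic---enters. Suppose $G\subset\tilde E$ were a $2$-dimensional fibre of $\varphi$ over a point $s\in S$. Then $G$ is swept out by contracted curves, so its image $\Sigma=\rho(G)$ is an irreducible surface inside the hyperplane section $E=\langle F\rangle\cap W$ that is collapsed to the single point $s$ by the map $\phi\colon W\dashrightarrow V\subset\PP^{g+2}$ given by $|\mathcal{I}_F(2)|$. I would then show that such a contracted surface must be a plane meeting $F$ along a conic. The mechanism is the reverse of the following observation: if a plane $P\subset W$ meets $F$ along a conic $\eta$, then $P\subset\langle F\rangle$ (a smooth conic spans its plane) and every quadric through $F$ restricts on $P\cong\PP^2$ to a conic vanishing on $\eta$, hence to a scalar multiple of $\eta$; thus $|\mathcal{I}_F(2)|$ restricts to a $0$-dimensional system on $P$ and $\phi$ contracts $P$ to a point, producing precisely a $2$-dimensional fibre. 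The delicate point is to run this in reverse: to prove that the restriction of $|\mathcal{I}_F(2)|$ to the contracted surface $\Sigma$ is $0$-dimensional only if $\Sigma$ is linear with $\Sigma\cap F$ a conic. Here I would use that $\Sigma$ is ruled by $2$-secant lines of $F$ on which all quadrics through $F$ are constant, analyse $|\mathcal{I}_F(2)|\big|_{\Sigma}$ via the exact sequence of the ideal sheaf of $F$ restricted to $\Sigma$, and invoke the normality of $S$ and its minimal degree to pin down $\Sigma$. This contradicts the hypothesis and so excludes $2$-dimensional fibres.

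Once $2$-dimensional fibres are excluded, the Andreatta--Wisniewski theorem applies verbatim and yields that $\varphi$ is the blowup of the smooth surface $S$ in the smooth fourfold $V$, as required.
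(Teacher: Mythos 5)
Your overall reduction coincides with the paper's: by Ando's theorem (equivalently, the results of Andreatta--Wi\'sniewski you cite), the lemma comes down to excluding $2$-dimensional fibres of $\varphi$, and you correctly locate the role of the hypothesis that no plane in $W$ meets $F$ along a conic. But the exclusion itself --- the actual content of the lemma --- is not proven in your proposal; it is only outlined, and the outline is left incomplete at exactly the hard point. You pass to the image $\Sigma=\rho(G)$ of a hypothetical $2$-dimensional fibre and want to show, by analysing the restriction to $\Sigma$ of the linear system of quadrics through $F$, that $\Sigma$ must be a plane meeting $F$ along a conic; you flag this yourself (``the delicate point is to run this in reverse'') and never execute it. As written, nothing controls the degree of $\Sigma$, the scheme $\Sigma\cap F$, or rules out, say, a non-linear surface swept out by $2$-secant lines through a fixed point of $F$; the ideal-sheaf sequence you invoke gives no such control without substantial further argument. (Also, Lemma \ref{lem:morphism} says the proper transform of every $2$-secant line is contracted, not that these are \emph{precisely} the contracted curves --- e.g.\ conics meeting $F$ in four points would be contracted too --- so even your description of the ruling of $\Sigma$ needs care.)

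The missing idea, which is how the paper closes the argument, is to apply the Andreatta--Wi\'sniewski classification to the \emph{fibre} rather than to its image. Since $S$ is normal (Corollary \ref{lem:normal}), their main theorem together with their Prop.\ 4.11 forces any $2$-dimensional fibre $\widetilde Y$ to satisfy $\widetilde Y\cong\PP^2$ and $-K_{\widetilde W}|_{\widetilde Y}=\mathcal{O}_{\PP^2}(1)$. Combining $(3H^*-D)|_{\widetilde Y}=\mathcal{O}_{\PP^2}(1)$ with $(2H^*-D)|_{\widetilde Y}\sim 0$ immediately yields $H^*|_{\widetilde Y}=\mathcal{O}_{\PP^2}(1)$ and $D|_{\widetilde Y}=\mathcal{O}_{\PP^2}(2)$, so $\rho|_{\widetilde Y}$ is a linear embedding of $\PP^2$: the image $Y=\rho(\widetilde Y)$ is a plane in $W$, distinct from $F$, meeting $F$ along the conic $\widetilde Y\cap D$ --- contradicting the hypothesis of Theorem \ref{theorem-1}. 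This divisor-restriction computation on the classified fibre is what replaces your unfinished analysis of quadrics through $F$; without it (or a completed version of your alternative route) the proof has a genuine gap. Your discrepancy computation $a=1$, though correct, plays no role in filling it.
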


\begin{proof}
If to the contrary $S$ or $V$ were singular, then by \cite[Thm.\ 2.3]{Ando1985} the extremal 
$K_{\widetilde W}$-negative contraction $\varphi: \widetilde W\to V$ would have a 2-dimensional 
fiber, say, $\widetilde Y\subset\widetilde W$. 
Since $S$ is normal (see Corollary \ref{lem:normal}), by the main theorem and Prop.\ 4.11 in 
\cite{AndreattaWisniewski1998} one has
$\widetilde Y\cong \PP^2$ and 
\begin{equation*}
(3H^*-D)|_{\widetilde Y}=-K_{\widetilde W}|_{\tilde Y}=\mathcal{O}_{\PP^2}(1)\,.
\end{equation*} 
Since $\tilde Y$ is contracted to a point under $\varphi$, we have $(2H^*-D)|_{\widetilde Y}\sim 0$. Thus
$H^*|_{\widetilde Y}=\mathcal{O}_{\PP^2}(1)$ and $D|_{\widetilde Y}=\mathcal{O}_{\PP^2}(2)$.

It follows that the image $Y=\rho(\widetilde Y)\subset W$, where $\rho=\Phi_{|H^*|}$, 
is a plane, $Y\neq F$, and $Y\cap F\cong \widetilde Y\cap D$ is a conic in $Y\cong\PP^2$. 
However, the latter contradicts our assumption in Theorem \textup{\ref{theorem-1}} that $F$ 
does not meet any plane in $W$ along a conic.

Therefore, $\varphi$ has no 2-dimensional fiber. Hence the surface $S$ and the fourfold $V$ 
are smooth, and $\varphi$ is the blowup of $S$ by \cite[Thm.\ 2.3]{Ando1985}. 
\end{proof}

\begin{corollary}\label{cor:new}
The surface $S\subset V\subset\PP^{g+2}$ is a smooth normal cubic scroll in case \ref{case-g=10}
and a smooth quadric in case \ref{case-g=9}. 
\end{corollary}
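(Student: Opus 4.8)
The goal is to identify the surface $S\subset V$ explicitly in both cases, knowing from Lemma \ref{lem:replacement of 2.6 and 2.7} that $S$ and $V$ are smooth, from Lemma \ref{cor:5.4} that $\deg S=g-7$ (so $3$ in case \ref{case-g=10} and $2$ in case \ref{case-g=9}), and from Corollary \ref{lem:normal} that $S$ is normal. The plan is to pin down the linear span of $S$ and then invoke the classification of surfaces of minimal degree. First I would compute the dimension of the linear span $\langle S\rangle$ inside $\PP^{g+2}$, which amounts to determining how the ambient hyperplanes of $V$ restrict to $S$, equivalently computing $h^0$ of the relevant very ample line bundle on $S$; the claims stated in Theorem \ref{theorem-1}(1) assert $S\subset\PP^4\subset\PP^{12}$ in case \ref{case-g=10} and $S\subset\PP^3\subset\PP^{11}$ in case \ref{case-g=9}.

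In case \ref{case-g=9} the surface $S$ is a smooth normal irreducible quadric of degree $2$. A smooth irreducible nondegenerate surface of degree $2$ in projective space must span a $\PP^3$ (a degree-two surface in a larger span would be a cone or reducible, contradicting smoothness), and inside $\PP^3$ the only smooth irreducible quadric surface is $\PP^1\times\PP^1$. Thus $S$ is a smooth quadric $S\subset\PP^3\subset\PP^{11}$, as claimed. I would check nondegeneracy in its span directly, so that the degree-two bound forces $\dim\langle S\rangle=3$.

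In case \ref{case-g=10} the surface $S$ is a smooth normal irreducible surface of degree $3$. By the classification of surfaces of minimal degree (a linearly nondegenerate irreducible surface of degree $d$ in $\PP^N$ has $d\ge N-1$, with equality for rational normal scrolls and the Veronese, see \cite[Ch.\ 4, Prop.\ on p.\ 525]{Griffiths-Harris-1978}), a degree-$3$ smooth nondegenerate surface must span $\PP^4$ and be a rational normal cubic scroll $S_{2,1}\cong\FF_1$; the cone over a twisted cubic and the singular options are excluded by smoothness. So I would first verify that $S$ spans $\PP^4$ (i.e.\ is nondegenerate of degree $3$ there), then quote the classification to conclude $S$ is a normal cubic scroll $S\subset\PP^4\subset\PP^{12}$.

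The main obstacle is the span computation: establishing that $\dim\langle S\rangle$ is exactly $4$ (resp.\ $3$), rather than smaller, which is what makes $S$ \emph{normal} of minimal degree rather than a linear projection of such. This is where normality from Corollary \ref{lem:normal} and the already-determined degree enter decisively; once nondegeneracy in the claimed span is secured, the classification of minimal-degree surfaces does the rest and smoothness rules out the cone cases. Note that the corollary states $S$ is a \emph{normal} cubic scroll, so the degenerate (non-normal) projected cubic scrolls are precluded by Corollary \ref{lem:normal} together with the smoothness from Lemma \ref{lem:replacement of 2.6 and 2.7}.
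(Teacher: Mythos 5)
Your case \ref{case-g=9} argument is fine (degree $2$ plus smoothness forces a smooth quadric in $\PP^3$), and this matches the paper, which treats that case as immediate. But in case \ref{case-g=10} there is a genuine gap, and it sits exactly at the point you flag as ``the main obstacle'': you never actually rule out the possibility that $S$ is a smooth cubic surface in $\PP^3$, i.e.\ an anticanonically embedded del Pezzo surface of degree $3$. Such a surface is smooth, irreducible, normal, and of degree $3$, so it satisfies every property you have assembled from Lemmas \ref{cor:5.4} and \ref{lem:replacement of 2.6 and 2.7} and Corollary \ref{lem:normal}. Your claim that normality plus the degree ``enter decisively'' is not correct: normality only excludes the \emph{non-normal} projected cubic scroll in $\PP^3$ (and smoothness excludes the cone over a twisted cubic), but it says nothing against the del Pezzo cubic, which is as normal as can be. The classification of minimal-degree surfaces can only be invoked \emph{after} one knows $\dim\langle S\rangle=4$, which is precisely the assertion in question; as written, your argument is circular on this point.

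The paper closes this gap with an intersection-theoretic computation that your proposal lacks. Using \eqref{equation-intersections} and Lemma \ref{lemma-equation-intersection} one finds
\begin{equation*}
\varphi^*L\cdot \tilde E^3=(2H^*-D)\cdot (H^*-D)^3=-1,
\end{equation*}
and combining this with the blowup formula $\varphi^*L\cdot \tilde E^3=-L|_S\cdot K_S+K_V\cdot L\cdot S$ (and $-K_V=2L$, $L^2\cdot S=3$) gives $L|_S\cdot K_S=-5$. If $S$ spanned only a $\PP^3$, it would be an anticanonically embedded cubic surface, so $L|_S\sim -K_S$ and $L|_S\cdot K_S=-K_S^2=-3$, a contradiction. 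Only then does one conclude $\dim\langle S\rangle=4$, so that $S$ is a linearly nondegenerate smooth surface of degree $3$ in $\PP^4$, hence a normal cubic scroll. Note that this computation is also what would be needed to carry out the $h^0$ plan you sketch: Riemann--Roch for $L|_S$ requires the value of $L|_S\cdot K_S$, so the missing numerical input is unavoidable in any version of this argument.
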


\begin{proof}
By Lemmas \ref{cor:5.4}(iii) and \ref{lem:replacement of 2.6 and 2.7}, $S$ is a 
smooth surface of degree 3 in case \ref{case-g=10} and of degree 2 in case \ref{case-g=9}. 
It remains to show that in case \ref{case-g=10}, $S$ is a normal scroll in $\PP^4$ and not a 
smooth cubic surface in $\PP^3$. Using \eqref{equation-intersections} and Lemma 
\ref{lemma-equation-intersection} one can compute
\begin{equation*}
L^*\cdot \tilde E^3=(2H^*-D)\cdot (H^*-D)^3= -1.
\end{equation*}
On the other hand, 
\begin{equation*}
L^*\cdot \tilde E^3= -L|_{S}\cdot K_S+ K_V\cdot L\cdot S
\end{equation*}
(see e.g. \cite[Lem.\ 1.4]{Prokhorov-Zaidenberg-2014}), and so, due to \ref{nota:U-V},
\begin{equation*}
L|_{S}\cdot K_S= -L^*\cdot \tilde E^3-2 L^2\cdot S=1-6=-5.
\end{equation*}
If $\dim \langle S\rangle <4$, then $S$ is a cubic surface in $\PP^3$
and we have $L|_{S}\cdot K_S=-K_S^2=-3$, a contradiction.
Therefore, $\dim \langle S\rangle=4$, and so, $S\subset \PP^4$ is a linearly nondegenerate 
surface of degree $3$, i.e., a normal cubic scroll.
\end{proof}

\begin{lemma}\label{lem:U=V} 
Under the setting as before, the following hold. 
\begin{itemize}
\item
$\varphi(D)$ is a hyperplane section of $V$ singular along $S=\varphi(\tilde E)$, 
\item
there is an isomorphism $V\setminus\varphi(D)\cong W\setminus\rho(\tilde E)$.
\end{itemize}
\end{lemma}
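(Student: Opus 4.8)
The plan is to read off both assertions from the intersection relations \eqref{equation-intersections} together with the description of $\rho$ and $\varphi$ as blowups.

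First I would treat the first bullet. Pushing the relation $D\equiv\varphi^*L-2\tilde E$ forward along $\varphi$ and using that $\varphi$ is birational with exceptional divisor $\tilde E$ contracted to the surface $S$, so that $\varphi_*\tilde E=0$ while $\varphi_*\varphi^*L=L$ by the projection formula, one gets $\varphi_*D=L$. Hence $\varphi(D)$ is a member of $|L|$, i.e.\ a hyperplane section of $V\subset\PP^{g+2}$. To see the singularity along $S$, I would instead pull back: since $D\not\subset\tilde E$ and $\tilde E$ is the only $\varphi$-exceptional divisor, the proper transform of $\varphi(D)$ is $D$ itself, so $\varphi^*\varphi(D)=D+m\tilde E$ as effective divisors, with $m=\operatorname{mult}_S\varphi(D)$. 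But this class equals $\varphi^*L\equiv D+2\tilde E$ by \eqref{equation-intersections}, and since $\tilde E\not\equiv 0$ the comparison forces $m=2$. Thus $\varphi(D)$ contains $S=\varphi(\tilde E)$ with multiplicity $2$ and is therefore singular along $S$.

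For the second bullet I would exhibit the common open subset $\widetilde W\setminus(\tilde E\cup D)$ and show that $\rho$ and $\varphi$ restrict to isomorphisms from it onto $W\setminus\rho(\tilde E)$ and $V\setminus\varphi(D)$, respectively. On the $W$ side, $\rho(\tilde E)=E=\langle F\rangle\cap W$ is the unique hyperplane section through $F$, and $\rho^{-1}(E)=\tilde E\cup D$ set-theoretically, since points of $E\setminus F$ have their unique preimages on $\tilde E$ while points of $F$ lie under $D$. As $\rho$ is an isomorphism over $W\setminus F\supseteq W\setminus E$, it restricts to an isomorphism $\widetilde W\setminus(\tilde E\cup D)\xrightarrow{\ \sim\ }W\setminus E$. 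On the $V$ side, $\varphi$ is the blowup of the smooth surface $S$ and hence an isomorphism over $V\setminus S$; because $S\subset\varphi(D)$ by the first bullet and $\Supp\varphi^*\varphi(D)=\Supp(D+2\tilde E)=D\cup\tilde E$, one has $\varphi^{-1}(\varphi(D))=\tilde E\cup D$ and $V\setminus\varphi(D)\subseteq V\setminus S$, so $\varphi$ restricts to an isomorphism $\widetilde W\setminus(\tilde E\cup D)\xrightarrow{\ \sim\ }V\setminus\varphi(D)$. Composing the inverse of the first isomorphism with the second yields $W\setminus\rho(\tilde E)\cong V\setminus\varphi(D)$.

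The routine points are the projection-formula computation of $\varphi_*D$ and the elementary fact that the two blowups are isomorphisms away from their smooth centers; these present no difficulty. The one place deserving care, and the main potential obstacle, is the set-theoretic identity $\varphi^{-1}(\varphi(D))=\tilde E\cup D$, i.e.\ verifying that $\varphi(D)$ acquires no spurious components and that its preimage contains no exceptional locus beyond $\tilde E\cup D$. This is controlled precisely by the equality $\varphi^*L\equiv D+2\tilde E$ of effective divisors coming from \eqref{equation-intersections}, which pins down the support exactly; once this is in hand the two open embeddings glue without further input.
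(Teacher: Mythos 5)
Your proof is correct and takes essentially the same route as the paper: both push the relation $D\equiv\varphi^*L-2\tilde E$ forward to see $\varphi(D)\sim L$, read off multiplicity $2$ along $S$ from the pullback, and obtain the isomorphism by gluing the two blowup isomorphisms over the common open set $\widetilde W\setminus(\tilde E\cup D)$, using $F\subset\rho(\tilde E)$ and $S\subset\varphi(D)$ to identify the preimages. The only cosmetic difference is that the paper checks $S\subset\varphi(D)$ geometrically (every fiber $\varphi^{-1}(s)$, $s\in S$, meets $D$), whereas you deduce it from the multiplicity computation $m=2$; both are immediate.
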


\begin{proof}
We have $D\sim\varphi^*L-2\tilde E$ in $\widetilde W$ and $S\subset\varphi(D)$, 
because any fiber $\varphi^{-1}(s)$, $s\in S$, meets $D$. Thus, $\varphi(D)\sim L$ is 
a hyperplane section of $V\subset\PP^{g+2}$ singular along $S=\varphi(\tilde E)$. 

Finally, since $F\subset E=\rho(\tilde E)$ we have isomorphisms 
\begin{equation*}
W\setminus\rho(\tilde E)\cong \widetilde W\setminus (\tilde E\cup D)\cong 
V\setminus (S\cup\varphi(D))=V\setminus \varphi(D)\,.
\end{equation*}
\end{proof}

The following corollary is immediate from \eqref{eq:4th power}
and Lemma \ref{lem:U-Mukai}. It ends the proof of Theorem \ref{theorem-1}.

\begin{corollary}\label{main-corollary}
Under the assumptions of Theorem \textup{\ref{theorem-1}}, 
\begin{itemize}
\item 
in case \ref{case-g=10}
$V\cong V_{18}\subset\PP^{12}$ is a smooth Mukai fourfold of genus $g=10$, and
\item 
in case \ref{case-g=9} $V\cong V_{16}\subset\PP^{11}$ is a smooth Mukai fourfold of genus $g=9$.
\end{itemize}
\end{corollary}

\section{Concluding remarks.}
\subsection{Cylindricity in families}
Our Theorem \ref{main-theorem} and the results in \cite{Prokhorov-Zaidenberg-2014}
show that for any $g\ge 7$, in the family of all 
Mukai fourfolds of genus $g$ there exist subfamilies of cylindrical such fourfolds.
The question about cylindricity of all the Mukai fourfolds of genus $g\ge 7$ remains open,
and as well the question about cylindricity of Mukai fourfolds
of lower genera is. We expect that the answers to both questions are negative in general.
However, at the moment we do not dispose suitable tools to prove this.

\subsection{Rationality questions}
The question about cylindricity is ultimately related to the rationality problem.
For instance, in dimension $3$ cylindricity of a Fano variety implies its rationality.
Note that for any $g=5,\ldots, 8$ there exist 
rational Mukai fourfolds $V=V_{2g-2}\subset \PP^{g+2}$ of genus $g$. We also have the following fact. 

\begin{proposition}\label{prop:rationality}
Any Mukai fourfold $V=V_{2g-2}\subset \PP^{g+2}$ of genus $g\in\{7, 9, 10\}$ is rational. 
\end{proposition}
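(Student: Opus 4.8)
The plan is to establish rationality of the Mukai fourfolds $V=V_{2g-2}$ for $g\in\{7,9,10\}$ by exploiting the birational links already produced in this paper and in \cite{Prokhorov-Zaidenberg-2014}. The key observation is that each such $V$ is connected, via a Sarkisov link as in diagram \eqref{eq:diagram}, to the del Pezzo quintic fourfold $W=W_5\subset\PP^7$. Since rationality is a birational invariant and the links realize explicit birational maps $\phi\colon W\dashrightarrow V$, it suffices to prove that $W$ itself is rational. Thus the first step is to reduce the proposition to the single assertion that $W_5$ is rational.

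First I would treat the cases $g=10$ and $g=9$, which are the content of the present paper. By Theorem \ref{theorem-1}, for a surface $F$ as in case \ref{case-g=10} (respectively \ref{case-g=9}), the map $\phi\colon W\dashrightarrow V$ given by the quadrics through $F$ is birational, with inverse the projection from $\langle S\rangle$; equivalently, both $\rho$ and $\varphi$ in \eqref{eq:diagram} are blowups of smooth surfaces, so $\widetilde W$ is simultaneously birational to $W$ and to $V$. By Proposition \ref{lemma-existence-surfaces}, surfaces $F$ of the required types do exist on $W$, so each family of Mukai fourfolds $V_{18}$ and $V_{16}$ arising this way is birational to $W$. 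For $g=7$ I would invoke instead the analogous links of \cite{Prokhorov-Zaidenberg-2014} producing $V_{12}$ from $W$; since $\rk\Pic V=1$, any smooth Mukai fourfold of the given genus is deformation equivalent to, and in fact isomorphic in moduli to, a member produced by the link, so rationality follows for the whole family. In each case the conclusion is that $V$ is birational to $W$.

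It then remains to show that $W=W_5\subset\PP^7$ is rational. This is classical: $W$ is the section of $\Gr(2,5)\subset\PP^9$ by two general hyperplanes, and by Proposition \ref{prop:PZ4.11} the projection $\hat\phi\colon W\dashrightarrow\PP^5$ from a general line $l\subset W$ (not contained in any plane of $W$) realizes $W$ birationally, via the blowup $\widehat W$, over a smooth quadric fourfold $Q^4\subset\PP^5$. A smooth quadric fourfold is rational — for instance by stereographic projection from any of its points — hence $W$ is rational, and therefore so is each $V$.

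The main obstacle, and the only point requiring care, is the assertion that \emph{every} Mukai fourfold of genus $g\in\{7,9,10\}$ is birational to $W$, not merely those members constructed by our explicit links. Theorem \ref{theorem-1} and Proposition \ref{lemma-existence-surfaces} only guarantee that \emph{some} $V_{2g-2}$ arise from $W$ via the Sarkisov link. To cover the general member one must argue that the construction deforms in families covering the whole moduli space; concretely, one shows that the surfaces $S\subset V$ appearing in \eqref{eq:diagram} (a cubic scroll for $g=10$, a quadric for $g=9$) exist on the general $V$ of the given genus, so that the inverse link from $V$ back to $W$ can be run universally. This relies on the known structure of the Hilbert scheme of such surfaces on a general Mukai fourfold and is where the genuine work lies; the rationality of $W$ itself, once the birational reduction is in place, is immediate.
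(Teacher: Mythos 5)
There is a genuine gap, and you have in fact located it yourself in your final paragraph: your argument only proves rationality for those Mukai fourfolds that arise from $W=W_5$ via the Sarkisov links of Theorem \ref{theorem-1} (or the links of \cite{Prokhorov-Zaidenberg-2014} for $g=7$), whereas the proposition asserts rationality of \emph{any} Mukai fourfold of genus $g\in\{7,9,10\}$. The paper itself emphasizes that the linked fourfolds form only \emph{subfamilies} of the full family of Mukai fourfolds of each genus (see the concluding remarks, where even cylindricity of the general member is stated as an open problem). Your attempted patch does not close this gap: the step ``since $\rk\Pic V=1$, any smooth Mukai fourfold of the given genus is deformation equivalent to, and in fact isomorphic in moduli to, a member produced by the link, so rationality follows for the whole family'' is unjustified on both counts. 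Deformation equivalence does not transfer rationality (rationality is not a deformation-invariant property of smooth projective fourfolds), and nothing in the paper shows that every $V_{2g-2}$ carries a surface $S$ (cubic scroll for $g=10$, quadric for $g=9$) allowing one to run the inverse link; establishing that for an arbitrary, rather than general, member would be a substantial new result, not a routine Hilbert-scheme argument.

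The paper's actual proof avoids the links entirely and works directly on an arbitrary $V$: by Shokurov's theorem there is a line $\lambda\subset V$; a general pencil $\mathcal H$ of hyperplane sections through $\lambda$ has smooth members which are Fano threefolds of genus $g$ with Picard group $\ZZ\cdot K$, and the generic fiber $X$ of the associated fibration is such a Fano threefold over the non-closed field $\CC(\PP^1)$, carrying a line $\Lambda$ defined over that field. The Fano--Iskovskikh double projection from $\Lambda$ then maps $X$ birationally to a form of $\PP^3$ (for $g=9$) or to a smooth quadric (for $g=10$), which are $\CC(\PP^1)$-rational by Tsen's theorem; for $g=7$ one gets a del Pezzo fibration of degree $5$ and concludes by the Enriques--Manin--Swinnerton-Dyer theorem. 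This relative (fibered) argument is what makes the statement valid for \emph{every} member of the family, which is precisely what your reduction to the rationality of $W_5$ cannot deliver. If you want to salvage your approach, you would have to restrict the proposition to the fourfolds constructed in Theorem \ref{theorem-1}, for which your argument (birationality to $W$ via $\widetilde W$, plus rationality of $W$ by projection from a line onto a quadric) is correct but proves a strictly weaker statement.
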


\begin{proof} By Shokurov's theorem (\cite{Shokurov-1979}) applied to a hyperplane section, 
there exists a line $\lambda$ on $V$.
By an easy parameter count (see \cite[Lem.\ 2.4]{Prokhorov-Zaidenberg-2014}) a general hyperplane
section of $V$ passing through $\lambda$ is smooth. Hence one can take a pencil $\mathcal{H}$
of hyperplane sections of $V$ passing through $\lambda$ whose general member $U=H_{2g-2}\in \mathcal H$ is
a smooth anticanonically embedded
Fano threefold of genus $g$ with $\Pic U=\ZZ\cdot K_U$. 
Blowing up the base locus of $\mathcal H$ yields a family $\mathfrak V\to \PP^1$, 
whose fibers are the members of $\mathcal H$ and the total space $\mathfrak V$ is birational to $V$. 

Consider the generic fiber 
$X=\mathfrak V\times \operatorname{Spec}\,\CC(\PP^1)$, where
$\PP^1$ is the parameter space of 
the pencil $\mathcal H$. 
As before, $X$ is a Fano threefold of genus $g$ over the non-closed field 
$\CC(\PP^1)$ with $\Pic X=\ZZ\cdot K_X$.
It suffices to show the $\CC(\PP^1)$-rationality of $X$. 

By construction, the line $ \lambda\subset V$ gives a
line $\Lambda \subset X$ defined over $\CC(\PP^1)$.
Then we can apply the Fano-Iskovskikh double projection $\Psi: X \dashrightarrow Y$ 
from $\Lambda$, see \cite{Iskovskikh-Prokhorov-1999}. For $g=9$ ($g=10$, respectively)
the map $\Psi$ is birational and $Y$ is a form of $\PP^3$, i.e., a Brauer-Severi scheme, 
(a smooth quadric $Q\subset \PP^4$, respectively). 
Since $\CC(\PP^1)$ is a $c_1$-field, by Tsen's theorem, $Y$ is 
$\CC(\PP^1)$-rational, and so, $X$ is as well. In the case $g=7$ we have $Y\cong \PP^1$ 
and $\Psi$ is a birational map to a del Pezzo fibration of degree $5$. 
Thus, the original variety $V$ has a birational structure of a del Pezzo fibration 
of degree $5$ over a surface. Then $V$ is rational by the Enriques--Manin--Swinnerton-Dyer
theorem (see, e.g., 
\cite{Shepherd-Barron-1992}).
\end{proof}

We do not know whether the rationality as in Proposition \ref{prop:rationality} holds also for the Mukai fourfolds $V_{2g-2}$ of genera $g=5,6,8$.

\subsection{Compactifications of $\CC^4$}
The Hirzebruch problem about compactifications of the affine space $\mathbb A^n$ (\cite{Hirzebruch-1954})
is also closely related to our cylindricity problem. One can ask the following natural question:

\begin{quote}
Which Mukai fourfolds can serve as compactifications of $\mathbb A^4$?
\end{quote}

We hope that the corresponding examples can be constructed via Sarkisov links,
likewise this is done in the present paper for cylindricity. 
For the del Pezzo fourfolds, a similar problem was completely solved in \cite{Prokhorov1994}.


\begin{thebibliography}{KPZ14}

\bibitem[And85]{Ando1985}
T.~Ando.
\newblock On extremal rays of the higher-dimensional varieties.
\newblock {\em Invent. Math.}, 81(2):347--357, 1985.

\bibitem[AW98]{AndreattaWisniewski1998}
M.~Andreatta and J.~A.~Wi{\'s}niewski.
\newblock On contractions of smooth varieties.
\newblock {\em J.\ Algebraic Geom.}, 7(2):253--312, 1998.

\bibitem[Ca64]{Cayley1864} 
A.~Cayley. 
\newblock On certain developable surfaces.
\newblock {\em Quart. J. Pure Appl. Math.}, VI:108--126, 1864.

\bibitem[Dol12]{Dolgachev-ClassicalAlgGeom}
I.~V.~Dolgachev.
\newblock {\em Classical algebraic geometry}.
\newblock Cambridge University Press, Cambridge, 2012.

\bibitem[FP01]{Fischer-Piontkowski2001}
G.~Fischer, J.~Piontkowski.
\newblock {\em Ruled varieties}.
\newblock {\em Advanced Lectures in Mathematics}. 
Fridrich Vieweg \& son, Braunschweig, 2001.

\bibitem[Fuj80]{Fujita1980}
T.~Fujita.
\newblock On the structure of polarized manifolds with total deficiency one.
{I}.
\newblock {\em J. Math. Soc. Japan}, 32(4):709--725, 1980.

\bibitem[Fuj81]{Fujita-620281}
T.~Fujita.
\newblock On the structure of polarized manifolds with total deficiency one.
{II}.
\newblock {\em J. Math. Soc. Japan}, 33(3):415--434, 1981.

\bibitem[Fuj86]{Fujita-1986-1}
T.~Fujita.
\newblock Projective varieties of {$\Delta$}-genus one.
\newblock In {\em Algebraic and topological theories (Kinosaki, 1984)}, 
149--175. Kinokuniya, Tokyo, 1986.

\bibitem[GH78]{Griffiths-Harris-1978}
Ph.~Griffiths and J.~Harris.
\newblock {\em Principles of algebraic geometry}.
\newblock Wiley-Interscience [John Wiley \& Sons], New York, 1978.
\newblock Pure and Applied Mathematics.

\bibitem[GH79]{Griffiths-Harris-1979}
Ph.~Griffiths and J.~Harris.
\newblock Algebraic geometry and local differential geometry.
\newblock {\em Ann. Sci. Ecole Norm. Sup.} (4) 12(3):355--452, 1979.

\bibitem[Har92]{Harris1992}
J.~Harris.
\newblock {\em Algebraic geometry}, volume 133 of {\em Graduate Texts in
Mathematics}.
\newblock Springer-Verlag, New York, 1992.
\newblock A first course.

\bibitem[Hir54]{Hirzebruch-1954}
F.~Hirzebruch.
\newblock Some problems on differentiable and complex manifolds. 
\newblock {\em Ann.\ Math.} 60(2):213--236, 1954. 

\bibitem[Isk77]{Iskovskih1977a} 
V.~A.~Iskovskikh, 
\newblock Fano threefolds. {I}.
\newblock {\em Izv. Akad. Nauk SSSR Ser. Mat.} 41:516--562,
1977.

\bibitem[IP99]{Iskovskikh-Prokhorov-1999}
V.~A.~Iskovskikh and Yu.~Prokhorov.
\newblock {\em Fano varieties. {A}lgebraic geometry {V}.}, volume~47 of {\em
Encyclopaedia Math. Sci.}
\newblock Springer, Berlin, 1999.

\bibitem[KPZ11]{Kishimoto-Prokhorov-Zaidenberg}
T.~Kishimoto, Yu.~Prokhorov, and M.~Zaidenberg.
\newblock Group actions on affine cones.
\newblock In: {\em Affine Algebraic
Geometry. CRM Proceedings and Lecture Notes}, 54:123--163. 
\newblock American Mathematical Society,
Providence, 2011.

\bibitem[KPZ13]{Kishimoto-Prokhorov-Zaidenberg-criterion}
T.~Kishimoto, Yu. Prokhorov, and M.~Zaidenberg.
\newblock $\mathbf{G}_a$-actions on affine cones.
\newblock {\em Transformation Groups}, 18(4):1137--1153, 2013.

\bibitem[KPZ14]{Kishimoto-Prokhorov-Zaidenberg-Fano}
T.~Kishimoto, Yu. Prokhorov, and M.~Zaidenberg.
\newblock Affine cones over {F}ano threefolds and additive group actions.
\newblock {\em Osaka J. Math.}, 51(4):1093--1113, 2014.

\bibitem[Mor82]{Mori-1982}
S.~Mori.
\newblock Threefolds whose canonical bundles are not numerically effective.
\newblock {\em Ann. Math.}, 115:133--176, 1982.

\bibitem[Muk89]{Mukai-1989}
Sh.~Mukai.
\newblock Biregular classification of {F}ano {$3$}-folds and {F}ano manifolds
of coindex {$3$}.
\newblock {\em Proc.\ Nat.\ Acad. Sci. U.S.A.}, 86(9):3000--3002, 1989.

\bibitem[Pro94]{Prokhorov1994}
Yu.~Prokhorov.
\newblock Compactifications of {$\mathbf{C}\sp 4$} of index 3.
\newblock In {\em Algebraic geometry and its applications (Yaroslavl$'$,
1992)}, Aspects Math., E25, 159--169. Vieweg, Braunschweig, 1994.

\bibitem[Pro13]{Prokhorov-GFano-1}
Yu.~Prokhorov.
\newblock G-{F}ano threefolds, {I}.
\newblock {\em Adv. Geom.}, 13(3):389--418, 2013.

\bibitem[PZ15]{Prokhorov-Zaidenberg-2014}
Yu.~Prokhorov and M.~Zaidenberg.
\newblock Examples of cylindrical {F}ano fourfolds.
\newblock \emph{European J.\ Mathem.} 1--21, 2015, DOI 10.1007/s40879-015-0051-7.

\bibitem[PCS05]{Przhiyalkovskij-Cheltsov-Shramov-2005en}
V.~V.~Przhiyalkovskiĭ, I.~A.~Chelʹtsov, and K.~A. ~Shramov.
\newblock Hyperelliptic and trigonal Fano threefolds. (Russian). 
\newblock {\em Izv.\ Ross.\ Akad.\ Nauk Ser.\ Mat.} 69:145--204, 2005. 
English translation in: {\em Izv.\ Math.} 69:365–421, 2005. 

\bibitem[ShB92]{Shepherd-Barron-1992} 
N.~I.~Shepherd-Barron.
\newblock The rationality of quintic Del Pezzo surfaces---a short proof.
\newblock \emph{Bull.\ London Math.\ Soc.} 24:249--250, 1992.

\bibitem[Sho79]{Shokurov-1979}
V.~V.~Shokurov.
\newblock The existence of a straight line on Fano 3-folds. (Russian)
\newblock {\em Izv.\ Akad.\ Nauk
SSSR Ser.\ Mat.} 43:922--964, 1979. English translation in: \emph{Math.\ USSR Izvestija}
15:173--209, 1980.

\bibitem[Tod30]{Todd1930}
J.~A.~Todd.
\newblock The locus representing the lines of four-dimensional space and its
application to linear complexes in four dimensions.
\newblock {\em Proc. London Math. Soc.}, 30:513--550, 1930.
\newblock Ser. 2.

\bibitem[Zak87]{Zak1987} F.~L.~Zak.
\newblock The structure of Gauss mappings. (Russian)
\newblock {\em Funktsional. Anal. i Prilozhen.} 21(1):39--50, 1987. 
English translation in: \emph{Funct. Anal. Appl.} 21:32--41, 1987.

\end{thebibliography}
\end{document}